\newtheorem{thm}{Theorem}[section]
\newtheorem*{thm*}{Theorem}
\newtheorem{lem}[thm]{Lemma}
\newtheorem{prop}[thm]{Proposition}
\newtheorem*{prop*}{Proposition}
\newtheorem{cor}[thm]{Corollary}
\newtheorem*{cor*}{Corollary}
\theoremstyle{definition}
\newtheorem{defn}[thm]{Definition}
\newtheorem*{defn*}{Definition}
\newtheorem*{question*}{Question}
\newtheorem*{Pquestion*}{Popa's question}
\newtheorem*{conv*}{Convention}
\newcommand{\dminus}{ 
\buildrel\textstyle\ .\over{\hbox{ 
\vrule height3pt depth0pt width0pt}{\smash-} 
}}
\def\bb{\mathbb}
\def\bb{\mathbb}
\def\cal{\mathcal}
\def\u{\mathsf 1}
\newcommand{\cstar}{$\mathrm{C}^*$}
\def\dotminussym#1#2{%
  \setbox0=\hbox{$\m@th#1-$}%
  \kern.5\wd0%
  \hbox to 0pt{\hss\hbox{$\m@th#1-$}\hss}%
  \raise.6\ht0\hbox to 0pt{\hss$\m@th#1.$\hss}%
  \kern.5\wd0}
\newcommand{\dotminus}{\mathbin{\mathpalette\dotminussym{}}}
\DeclareMathOperator{\tr}{tr}
\def \Th{\operatorname{Th}}
\def \R{\mathcal R}
\def \u{\mathcal U}
\def \val{\operatorname{val}}
\newcommand{\mip}{\operatorname{MIP}}
\newcommand{\cqa}{C_{qa}}
\newcommand{\cqc}{C_{qc}}
\def\P{\operatorname{P}}
\def\NP{\operatorname{NP}}
\def\IP{\operatorname{IP}}
\def\MIP{\operatorname{MIP}}
\def\EXP{\operatorname{EXP}}
\def\NEXP{\operatorname{NEXP}}
\def\PSPACE{\operatorname{PSPACE}}
\def\NEEXP{\operatorname{NEEXP}}
\def\BPP{\operatorname{BPP}}
\def\r{\bb R}
\def\n{\bb N}
\def\sval{\operatorname{sval}}
\def\A{\cal A}
\def\l@subsection{\@tocline{2}{0pt}{2.5pc}{5pc}{}}
\def\l@subsubsection{\@tocline{2}{0pt}{5pc}{7.5pc}{}}
\begin{document}

%%%%%%%%%%%%%%%%%%%%%%%%%%%%%%%%%%%%%%%%%%%%%%

\title{The Connes Embedding Problem:  A guided tour}

\author{Isaac Goldbring}
\address{Department of Mathematics\\University of California, Irvine, 340 Rowland Hall (Bldg.\# 400),
Irvine, CA 92697-3875}
\email{isaac@math.uci.edu}
\urladdr{http://www.math.uci.edu/~isaac}
\thanks{Goldbring was partially supported by NSF grant DMS-2054477.}

\maketitle

\begin{abstract}
The Connes Embedding Problem (CEP) is a problem in the theory of tracial von Neumann algebras and asks whether or not every tracial von Neumann algebra embeds into an ultrapower of the hyperfinite II$_1$ factor.  The CEP has had interactions with a wide variety of areas of mathematics, including \cstar-algebra theory, geometric group theory, free probability, and noncommutative real algebraic geometry (to name a few).  After remaining open for over 40 years, a negative solution was recently obtained as a corollary of a landmark result in quantum complexity theory known as $\mip^*=\operatorname{RE}$.  In these notes, we introduce all of the background material necessary to understand the proof of the negative solution of the CEP from $\mip^*=\operatorname{RE}$.  In fact, we outline two such proofs, one following the ``traditional'' route that goes via Kirchberg's QWEP problem in \cstar-algebra theory and Tsirelson's problem in quantum information theory and a second that uses basic ideas from logic. 
\end{abstract}

\tableofcontents
\section{Introduction}

\subsection{What is this all about?}

The story told in this tour is (in this author's humble opinion) absolutely fascinating!  It can also be completely confusing and terrifying to an outsider.  It contains a seemingly infinite number of acronyms (CEP, WEP, QWEP, LLP, MIP*, RE,...), all sorts of tensor products $(\bar \otimes, \otimes_{\max}, \otimes_{\min})$, entangled particles, and even good friends Einstein and G\"odel both make an appearance (the latter twice).

At one end of the story is the \emph{Connes embedding problem} (CEP), a problem in the field of \emph{von Neumann algebras} first posed by Alain Connes in his famous 1976 paper ``Classification of Injective Factors'' \cite{Connes} (the paper mainly responsible for his being awarded the Fields Medal in 1982).  Roughly speaking, a von Neumann algebra is a collection of bounded operators on a Hilbert space containing the identity operator, closed under addition, composition, scalar multiplication, and adjoint, and which is closed in a certain topology known as the weak operator topology.  The von Neumann algebras Connes was considering came equipped with a trace functional that shares many of the nice properties enjoyed by the (normalized) trace functional on matrices.  

Here is the passage from \cite{Connes} which led to the establishment of the CEP:  

``We now construct an approximate imbedding of $N$ in $\cal R$.  Apparently such an imbedding ought to exist for all II$_1$ factors because it does for the regular representation of free groups.  However, the construction below relies on condition 6.''

What is this quote trying to convey?  $\cal R$ is the \emph{hyperfinite II$_1$ factor}, arguably the most important tracial von Neumann algebra.  For now, one should just think of $\cal R$ as an appropriate limit of matrix algebras $M_n(\bb C)$ of increasing sizes.  We will have much to say about this algebra throughout this paper.  A II$_1$ factor is just a particular kind of tracial von Neumann algebra and the $N$ appearing in the passage is a particular II$_1$ factor satisfying a certain list of properties.  By an approximate imbedding of $N$ in $\cal R$, Connes means that any finite amount of ``information'' about elements of $N$ (that is, the trace of finitely many $*$-polynomials with elements from $N$ plugged in) can be ``simulated'' by appropriate elements of $\cal R$.  Connes later shows that such approximate imbeddings correspond to actual embeddings of $N$ into a so-called \emph{ultrapower} of $\cal R$, denoted $\cal R^\u$.  He comments that such an embedding ``ought'' to always exist since it does for a particular von Neumann algebra, namely the group von Neumann algebra associated to the free group, denoted $L(\bb F_2)$ (see Subsection \ref{sec3:operator} below).  Why that ``ought to be'' is not quite clear.  Nevertheless, Connes is only able to show that the $N$ under consideration can be embedded in $\cal R^\u$ using one of the conditions (namely the sixth one) he has assumed about this particular algebra.

Thus, the Connes Embedding Problem (CEP) states:  every tracial von Neumann algebra embeds (in a trace-preserving way) in an ultrapower $\cal R^\u$ of $\cal R$.  We will say this slightly more precisely in Subsection \ref{sec3:tracial} below.  Many prefer to call this a ``Problem'' rather than a ``Conjecture'' since ``ought to'' is not a very strong sentiment.

The robustness of the CEP lies in its many reformulations and from the many areas of mathematics it has touched upon; see Section \ref{sec2} for some examples.

At the other end of this story (and seemingly a world far, far away), is a landmark theorem in \emph{quantum complexity theory} known as $\MIP^*=\operatorname{RE}$ \cite{MIP*}.  Like most theorems in complexity theory, it compares two \emph{complexity classes}.  Roughly speaking, a complexity class consists of a collection of ``problems'' that all share some common level of ``difficulty'' with which one can solve or verify these problems.  The class $\operatorname{RE}$ denotes those problems for which there is a computer program so that, if you left the program running long enough, would list all instances for which the problem has a positive answer (but you would never known about instances with a negative answer).  Usually complexity theorists are more interested in levels of efficiency and the class $\operatorname{RE}$ is hardly ever discussed.  The other complexity class in the above equation is $\MIP^*$, which denotes those problems for which a ``verifier'' interacting with multiple cooperating (but noncommunicating) ``provers'' who share a source of quantum entanglement can reliably verify a positive instance of a problem.  The result $\MIP^*=\operatorname{RE}$ states that these two classes coincide!  This is a monumental result for it shows the power of quantum ideas in computational complexity.  One particular instance of this result is that the (in)famous \emph{halting problem}, which asks if a particular computer program will halt on, say, the empty input, which is known to be an undecidable problem, can actually be efficiently and reliably verified by two provers sharing some quantum entanglement; here efficiently means in polynomial time and reliably means that if the machine halts, then the verifier will accept the provers' proof of that fact with probability $1$, while if it does not halt, then only half the time will they accept a proof of halting when it in fact should not.  (An execution of the protocol has a probabilistic outcome, whence here the condition is that there is acceptance with probability at most $\frac{1}{2}$ over the verifier's and provers' random choices in the case of a 
Turing machine that does not halt.) This is an astounding result!

Even more amazing than the sheer statement of the result is that the equality $\mip^*=\operatorname{RE}$ actually yields a negative solution to CEP!  

\subsection{Connecting the dots}

But how could these seemingly unrelated topics be so tightly connected?  The answer lies through a series of previous known connections.  First, in a fundamental paper of Kirchberg \cite{K}, it was shown that CEP is equivalent to an important problem (Kirchberg even used the word conjecture) in the theory of \cstar-algebras stemming from the complexity of \cstar-tensor products known now as Kirchberg's QWEP problem (see Subsection \ref{sec3:problem} below).  Later, Fritz \cite{Fr} and independently Junge et. al. \cite{Junge} demonstrated that a positive answer to the QWEP problem would yield a positive answer to a problem in quantum information theory known as \emph{Tsirelson's problem} which, roughly speaking, ask whether the usual quantum mechanical framework and that coming from quantum field theory yield the same set of quantum correlations corresponding to Bell experiments.  While the jump from the QWEP problem to Tsirelson's problem might seem like quite a leap, once one unravels the definitions, this is actually a fairly straightforward argument and will be given in Subsection \ref{sec6:tsirelson} below.  Both sets of authors almost proved that the Kirchberg and Tsirelson problems were actually equivalent; Ozawa succeded in connecting the last dots in \cite{Oz}.  

Now we are at least in the same arena:  quantum information theory and quantum complexity theory (both area at least have ``quantum'' in their names).  The last step in the puzzle is to use a result of Fritz, Netzer, and Thom \cite{FNT} about the computability of the operator norm for universal group \cstar-algebras and the analysis leading to the equivalence of QWEP and Tsirelson to show that if Tsirelson's problem has a positive answer, then every language in $\mip^*$ would actually be decidable, contradicting $\mip^*=\operatorname{RE}$.  (See Subsection \ref{sec6:tsirelson} below for the complete argument.)

Okay, so that was a mouthful!

\subsection{Why another treatment of CEP?}

Numerous accounts of the CEP and its many equivalents can be found in the literature.  In fact, Pisier \cite{pisier} recently wrote a fascinating account (coming in just shy of 500 pages) on the CEP and its equivalences with QWEP and Tsirelson (and so, so much more).  Much trimmer accounts were given by Ozawa \cite{Oz,OzQ} and Capraro and Lupini \cite{CapLup}.

So if there are so many accounts of the CEP, why write another?  We have several good reasons:

First, all of the above accounts were written pre-$\mip^*=\operatorname{RE}$, so none of them actually explain how the story resolves itself.  

Second, all of the above accounts go into an extreme amount of detail and assume a fair amount of background knowledge in operator algebras.  We envision the reader in, say quantum physics or complexity theory, wanting to understand the main thread of the story and being overburdened by the overhead needed to enter the fray.  In this survey, we try very hard to at least state all of the necessary definitions.  On the other hand, we offer very little details or proofs in the interest of space and refer the reader to the above references if they are interested in the gritty details.  Also, since we are focusing on the one-way implications (as opposed to the equivalences the other accounts present), we save ourselves some complications.

Third, the operator algebra community may know very little quantum theory or complexity theory, so we offer brief introductions to these areas to at least paint the picture for them.

Finally, and most certainly gratuitously, we offer an ``alternative'' and, in this author's biased opinion, ``simpler'' path from $\mip^*=\operatorname{RE}$ to the failure of CEP than that outlined above using basic methods from mathematical logic.  This path also offers some extra bells and whistles to the failure of CEP, including a G\"odelian refutation of the CEP and a proof of the existence of ``many'' counterexamples to the CEP.  While we have our logician hats on, we take advantage of the fact that we have the readers' attention to describe a model-theoretic weakening of the CEP that is still open and quite fascinating (at least to us!).

\subsection{A quick guide to this guide}  In Section 2, we briefly describe some of the known equivalents of the CEP.  The reader may benefit from coming back to this section after having read some of the definitions, but this is supposed to whet the reader's appetite and convince them that the rest of the paper is worth reading.

Section 3 is a crash course in operator algebras, assuming some basic functional analysis that someone in quantum physics should probably be familiar with.  We cover both the \cstar and von Neumann algebra background needed as well as topics such as states and traces, the ultrapower construction, operator algebras arising from groups, and finally, what is so darn complicated about \cstar-algebra tensor products, culiminating in a discussion of why a positive solution to the CEP implies a positive solution to the QWEP problem.

Section 4 is a similar crash course, but this time in complexity theory.  We start from the definition of Turing machines, defining some of the basic complexity classes, and then work our way up to the class $\mip$ of languages verifiable by a verifier interacting with multiple cooperating provers.  

In Section 5, we make a quantum detour for those unfamiliar with the basic tenets of quantum mechanics and even take a digression on superdense coding just for fun (and to indicate the power of entanglement).  This section culminates with the definition of the complexity class $\mip^*$, the analog of $\mip$ where the provers are allowed to share quantum entanglement as a resource, and the precise statement of the result $\mip^*=\operatorname{RE}$.

Section 6 contains the details of the proof of the failure of the QWEP problem from $\mip^*=\operatorname{RE}$ by first showing how the latter yields a negative solution to Tsirelson's problem and then by establishing how a negative solution to Tsirelson's problem yields a negative solution to the QWEP problem.  Combined with our derivation of a positive solution of the QWEP problem from a positive solution to the CEP, this completes the proof of the negative solution to the CEP from $\mip^*=\operatorname{RE}$.

Section 7 offers the alternative proof alluded to above using basic ideas from logic.  We present the appropriate logic for studying tracial von Neumann algebras and discuss the main contribution from logic, namely G\"odel's Completeness Theorem.  We also describe the extra information about the CEP gleamed from the logical perspective mentioned above, including a completely operator-algebraic reformulation of our main model-theoretic contribution in terms of the undecidability of a certain ``moment approximation problem.''  We also offer an alternative proof of the failure of Tsirelson from $\mip^*=\operatorname{RE}$ using the Completeness Theorem.  Most of the material in this section represents joint work with Bradd Hart \cite{GH,GH2}.

Finally, in Section 8, we discuss the open problem around the existence of the so-called \emph{enforceable factor}, which is the model-theoretic weakening of the CEP referred to above.

\subsection{Acknowledgements}

We would like to thank the following people for giving us helpful comments and/or corrections regarding earlier versions of this manuscript:  Alec Fox, Jeffrey Barrett, Michael Cranston, Vern Paulsen, Jennifer Pi, Thomas Vidick, and Henry Yuen.

\section{Equivalent reformulations of CEP}\label{sec2}

One of the aspects of the CEP that makes it such an interesting problem is its numerous equivalences spanning many seemingly different areas of mathematics.  In the main text, the equivalences with Kirchberg's QWEP conjecture in \cstar-algebra theory and Tsirelson's problem in quantum information theory will be expounded on in more detail due to their relevance to the current story.  In this section, we briefly mention some of the other well-known equivalences:

\subsection{Free probability theory}\label{sec2:free}  In free probability theory, one considers ``noncommutative'' probability spaces, such as tracial von Neumann algebras $(M,\tau)$, where the elements of $M$ act as noncommutative random variables and the trace $\tau$ is the analog of the integral.  Voiculescu demonstrated the robustness of this theory, establishing free analogues of many familiar facts from ordinary probability theory and giving applications to operator algebras and random matrices (to name a few).  A nice introduction to free probability is Speicher's lecture notes \cite{speicher}.

In classical probability theory, the entropy of a random variable is an important numerical value measuring the amount of information obtained when measuring the random variable.  One method of calculating the entropy of a discrete random variable with probability distriubtion $\{p_1,\ldots,p_n\}$ is to approximate the distribution using ``microstates,'' which are functions $f:\{1,\ldots,N\}\to \{1,\ldots,n\}$ for which the fraction of $j\in \{1,\ldots,N\}$ for which $f(j)=k$ is within $\epsilon$ of $p_k$ for all $k=1,\ldots,n$.  By taking the logarithm of the number of such functions divided by $N$ for a given pair $(N,\epsilon)$ of parameters and then letting $N\to \infty$ and $\epsilon\to 0$, we obtain the entropy $H(p_1,\ldots,p_n)$ of the distribution.  A more general version of this works for a wider class of random variables.

When faced with the task of defining the free entropy of a tuple $(a_1,\ldots,a_n)$ of self-adjoint elements in a tracial von Neumann algebra $(M,\tau)$, Voiculescu proceeds analogously by considering those tuples $(A_1,\ldots,A_n)$ of self-adjoint matrices in some matrix algebra $M_k(\bb C)$ for which a certain finite number of ``moments'' approximate the corresponding moments in the tracial von Neumann algebra, that is, $\tau(p(a_1,\ldots,a_n))$ and $\tr(p(A_1,\ldots,A_n))$ differ by at most $\epsilon$ for finitely many noncommutative $*$-polynomials $p(X_1,\ldots,X_n)$ in $n$-variables.  Now one has to calculate the volume of the set of those matrices and let the various parameters involved tend to infinity or $0$.  With this definition of free entropy, one can prove a number of results which are the ``free'' analog of the corresponding result in the classical theory.  For example, it is known that a tuple of classical random variables has maximal entropy if and only if they are independent and have Gaussian distribution.  In the free theory, the free entropy of a tuple is maximal if and only if the elements of the tuple are freely independent and have ``semicircular distributions'' (which are known to be the free analog of the Gaussian distribution).  The paper \cite{voic} is a survey of free entropy by Voiculescu himself.

This definition of free entropy leads to an interesting feature:  if there are no such tuples $(A_1,\ldots,A_n)$ that ``simulate'' $(a_1,\ldots,a_n)$, then the free entropy of $(a_1,\ldots,a_n)$ equals $-\infty$.  It is well-known (see Subsection \ref{sec3:tracial} below) that, for a given tracial von Neumann algebra $(M,\tau)$, the set of such moments is nonempty for all such tuples $(a_1,\ldots,a_n)$ from $M$ if and only if $M$ embeds into $\cal R^\u$ (in a trace-preserving way).  Thus, CEP is equivalent to all tuples of self-adjoint elements in tracial von Neumann algebras having nonnegative free entropy.

\

\subsection{Hyperlinear groups}\label{sec2:hyperlinear}  Okay, so this one really is not an equivalence, but rather an equivalence with a \emph{special case} of the CEP.  An important notion in group theory is that of a \emph{sofic group}.  Roughly speaking, a countable discrete group $G$ is sofic if, for every finite subset $F$ of $G$, there is a symmetric group $S_n$ and a function $\phi:F\to S_n$ that is an ``approximately injective approximate homomorphism''.  For example, if $g,h,gh\in F$, then one would like to say that $\phi(gh)$ is close to $\phi(g)\phi(h)$, where closeness is measured with respect to the normalized Hamming distance between permutations (which calculates what fraction of elements the permutations disagree on).  The importance of this class of groups is that many important conjectures in group theory are known to hold when restricted to the class of sofic groups.  Surprisingly, there is no known example of a non-sofic group!  One can make a similar definition, replacing symmetric groups $S_n$ with unitary groups $U_n$, equipped with their normalized Hilbert-Schmidt metric; the resulting class of groups is called the class of \emph{hyperlinear groups}.  Every sofic group is hyperlinear and since we do not know if every group is sofic, we do not know if this inclusion is proper.  Moreover, there is no known example of a non-hyperlinear group.  We refer the reader to \cite{CapLup} for more information on sofic and hyperlinear groups.

The connection with CEP comes via an observation of Radulescu \cite{radulescu}, who showed that $G$ is hyperlinear if and only if the group von Neumann algebra $L(G)$ of $G$ (see Subsection \ref{sec3:operator} below) embeds into $\R^\u$.  In other words, if CEP is true just for group von Neumann algebras, then every group is hyperlinear!

Interestingly enough, even though we now know that CEP is false, we still do not know if its special case for group von Neumann algebras holds, that is, we still do not know if every group is hyperlinear.

\

\subsection{Embeddability of general von Neumann algebras}\label{sec2:embeddability}  The CEP is about tracial von Neumann algebras.  But there is a much wider class of von Neumann algebras out there.  Is there a reformulation of the CEP that addresses them?  The answer is yes and was established by Ando, Haagerup, and Winslow in \cite{AHW}.  There is a so-called type III (in the sense of Subsection \ref{sec3:more} below) version of $\cal R$, called the \emph{Araki Woods factor} $\cal R_\infty$, which is the unique hyperfinite type III$_1$ factor.  Moreover, there is a generalization of the tracial ultraproduct construction, known as the \emph{Ocneanu ultraproduct}, that covers the much larger class of $\sigma$-finite von Neumann algebras, of which $\cal R_\infty$ is one of them.  The main result of \cite{AHW} states that CEP is equivalent to the assertion that \emph{every} separably acting von Neumann algebra embeds \emph{with expectation} into the Ocneanu ultrapower $\cal R_\infty^\u$.  The notion of an embedding with expectation is defined in Subsection \ref{sec3:kirchberg's} below.  In the case of tracial von Neumann algebras, the embedding is automatically with expectation, but in the general case, it is a necessary nontriviality condition.

\

\subsection{Existentially closed factors}\label{sec2:existentially}  The model-theoretic notion of an \emph{existentially closed (e.c.)} structure is the generalization of the notion of algebraically closed field to an arbitrary structure (see Subsection \ref{sec8:existentially} below for a precise definition).  In particular, it makes sense to study e.c. groups, e.c. graphs, and, yes, even e.c. tracial von Neumann algebras.  One can prove many general facts about the class of e.c.\ tracial von Neumann algebras, such as they must be II$_1$ factors with McDuff's property and with only approximate inner automorphisms.  There are a plethora of e.c.\ tracial von Neumann algebras; in particular, every tracial von Neumann algebra embeds in an e.c.\ one.  However, can one actually name a concrete e.c.\ tracial von Neumann algebra?  It turns out that a positive solution to CEP is equivalent to the statement that $\cal R$ is an e.c.\ tracial von Neumann algebra; a proof of this fact will be given in Subsection \ref{sec8:existentially} below.

\

\subsection{Noncommutative real algebraic geometry}\label{sec2:noncommutative}  A Positivstellenzats is a theorem that declares that certain elements that are positive in some way are so for some ``good reason.'' Perhaps the best-known such result is the positive solution to Hilbert's 17th Problem, due to Artin \cite{artin} (although this author is unabashedly fond of Abraham Robinson's model-theoretic solution \cite{robinson}):  if $f(X_1,\ldots,X_n)\in \bb R(X_1,\ldots,X_n)$ is a positive semidefinite rational function, that is, a rational function such that $f(x_1,\ldots,x_n)\geq 0$ for all $x_1,\ldots,x_n\in \bb R$, then $f$ is a sum of squares of rational functions, providing a ``good reason'' that $f$ is positive semidefinite.  

One can ponder noncommutative versions of Artin's theorem.  First, we set $\bb R\langle X_1,\ldots,X_n\rangle$ to be the set of polynomials in $n$ noncommuting variables.  Consider the ``positivity'' statement that $f(A_1,\ldots,A_n)\geq 0$ for all self-adjoint matrices $A_1,\ldots,A_n\in M_m(\bb R)$ of operator norm at most $1$, for all $m\in \bb N$.  Then a theorem of Helton and McCullough \cite{HM} tells us that there is a good reason for this kind of positivity, namely that, for all $\epsilon\in \bb R^{>0}$, $f+\epsilon$ belongs to the \emph{quadratic module} generated by $1-X_i^2$, $i=1,\ldots,n$.  Here, a quadratic module is a subset $M$ of $\bb R\langle X_1,\ldots,X_n\rangle$ containing $1$, closed under addition, and closed under the function $a\mapsto g^*ag$, where $a\in M$ and $g\in \bb R\langle X_1,\ldots,X_n\rangle$ (and where $g^*$ is the result of reversing the orders of the variables in each monomial of $g$).  Note indeed that all functions in the quadratic module generated by the $1-X_i^2$'s must be positive in the above sense and the Helton-McCullough result says that this is (approximately) the good reason that any such noncommutative polynomial might be positive.  

Now suppose instead that we assume that $f$ is merely ``trace positive,'' that is, $\tr(f(A_1,\ldots,A_n))\geq 0$ for all such $A_1,\ldots,A_n$ as in the previous paragraph.  Clearly the operators in the Helton and McCullough result are trace positive.  But now you can also add finite sums of commutators $[A,B]:=AB-BA$ since the trace of a commutator vanishes.  One can ask if this new class of noncommutative polynomials gives a necessary and sufficient condition to be trace positive, that is, if $f$ is tracially positive, must it be the case that, for every $\epsilon>0$, $f+\epsilon$ differs from an element of the quadratic module generated by the $1-X_i^2$'s by a sum of commutators?  It turns out that this tracial version of the Positivstellenzats from the previous paragraph is actually equivalent to the CEP, a result proven by Klep and Schweighofer in in \cite{KS}.  

\section{A crash course in operator algebras}\label{sec3}

In this long section, we explain all of the background material in operator algebras one needs to know to understand the statements of both the CEP and the QWEP problem as well as to understand how a positive solution to the former implies a positive solution to the latter.  Nearly everything discussed here can be found in Pisier's book \cite{pisier}.  Brown and Ozawa's book \cite{BO} is another nice reference.

\subsection{Introducing \cstar-algebras}\label{sec3:introducingC}

A \emph{$*$-algebra} is an algebra $\A$ over $\bb C$ satisfying, for all $x,y\in \A$ and $\lambda\in \bb C$:
\begin{itemize}
    \item $(x+y)^*=x^*+y^*$
    \item $(xy)^*=y^*x^*$
      \item $(x^*)^*=x$
    \item $(\lambda x)^*=\bar \lambda x^*$.
\end{itemize}
If $\A$ is actually a unital algebra over $\bb C$ with unit $1$ for which $1^*=1$, we say that $\A$ is a \emph{unital $*$-algebra}.  There are obvious notions of $*$-subalgebra of a $*$-algebra and unital $*$-subalgebra of a unital $*$-algebra.

A \emph{$*$-homomorphism} between $*$-algebras is an algebra homomorphism that also preserves the $*$-operation.  If $\phi:\cal A\to \cal B$ is a $*$-homomorphism between unital $*$-algebras, then we implicitly assume that $\phi$ maps the unit of $\A$ to the unit of $\cal B$.

In this paper, the most relevant (unital) $*$-algebras are $\cal B(\cal H)$ and its (unital) $*$-subalgebras.  Recall that for a Hilbert space $\cal H$, a linear operator $T:\cal H\to \cal H$ is \emph{bounded} if its \emph{operator norm} $\|T\|:=\sup\{\|T\xi\| \ : \ \|\xi\|\leq 1\}$ is finite.  $\cal B(\cal H)$ is a $*$-algebra with the algebra operations being addition, composition, and scalar multiplication and with the $*$-operation being given by the \emph{adjoint}, where, for $T\in \cal B(\cal H)$, we have that $T^*\in \cal B(\cal H)$ is the unique operator for which $\langle T\xi,\eta\rangle=\langle \xi,T^*\eta\rangle$ for all $\xi,\eta\in \cal H$.  (In connection with this formula, we follow the convention that inner products are linear in the first argument and conjugate-linear in the second argument; this is the opposite of the convention used in the physics literature.)  $\cal B(\cal H)$ is a unital $*$-algebra with identity operator $I_{\cal H}$ acting as the unit.

We now define the first kind of operator algebra, namely the class of \cstar-algebras.  For both classes of operator algebras, there are two approaches to their definition, namely the concrete and the abstract.  A \emph{concrete \cstar-algebra} is a $*$-subalgebra $\A$ of $\cal B(\cal H)$ that is closed in the operator norm topology.  If, moreover, $\A$ contains the identity $I_{\cal H}$, then we say that $\A$ is a \emph{unital concrete \cstar-algebra}.  

We now present the abstract approach to \cstar-algebras.  Suppose that $\A$ is a $*$-algebra.  A \emph{\cstar-norm} on $\A$ is a norm on $\A$ satisfying the following identities for all $x,y\in \A$:
\begin{itemize}
    \item $\|xy\|\leq \|x\|\|y\|$
    \item $\|x^*\|=\|x\|$
    \item $\|x^*x\|=\|x\|^2$.
\end{itemize}
The first two identities are the usual axioms for defining a \emph{normed} $*$-algebra; the last axiom, called the \emph{\cstar-identity}, is what makes a \cstar-norm a \cstar-norm.  An \emph{abstract \cstar-algebra} is a $*$-algebra equipped with a complete \cstar-norm.  If $\A$ is a $*$-algebra equipped with a \cstar-norm, then the $*$-algebra operations extend naturally to the completion of the $*$-algebra, which is then an abstract \cstar-algebra.  An \emph{abstract unital \cstar-algebra} is an abstract \cstar-algebra that is a unital *-algebra; in this case, we have $\|1\|=1$.

It is an important fact that a $*$-homomorphism between abstract \cstar-algebras is necessarily contractive; it is an isometric embedding if and only if it is injective.  In particular, given any $*$-algebra $\A$, there is at most one norm on $\A$ which makes $\A$ into a \cstar-algebra.

It is an easy exercise to see that the operator norm on $\cal B(\cal H)$ is a \cstar-norm, whence every concrete \cstar-algebra is an abstract \cstar-algebra.  On the other hand, the \emph{Gelfand-Naimark theorem} states that every abstract \cstar-algebra is isomorphic (as abstract \cstar-algebras) to a concrete \cstar-algebra.  This result can be reformulated in terms of representations of \cstar-algebras.  Given a \cstar-algebra $\A$, a \emph{representation} of $\A$ is a $*$-homomorphism $\pi:\A\to \cal B(\cal H)$ for some Hilbert space $\cal H$.  Usually a nondegeneracy condition is assumed on a representation, namely that $\{\pi(a)(\xi)\ : \ a\in \A, \xi\in \cal H\}$ is dense in $\cal H$; if $\A$ is unital, this is equivalent to assuming that $\pi(1)=I_{\cal H}$.  The representation $\pi$ is \emph{faithful} if it is moreover injective (equivalently isometric).  Thus, the Gelfand-Naimark theorem states that every abstract \cstar-algebra admits a faithful representation.

From here on out, we no longer make a distinction between concrete and abstract \cstar-algebras and simply take either perspective whenever it is convenient.

\textbf{Unless stated otherwise, in the rest of this paper, we restrict attention to unital \cstar-algebras; we might often repeat this convention for emphasis}.

A \cstar-algebra is \emph{commutative} (or \emph{abelian}) if its multiplication is commutative.  Given a compact Hausdorff space $X$, the set $C(X)$ of continuous, complex-valued functions is a unital commutative \cstar-algebra under the pointwise operations of addition, multiplication, and scalar multiplication, with the $*$-operation being given by $f^*:=\bar f$ (complex conjugate), and with norm given by $\|f\|:=\sup_{x\in X}|f(x)|$.  In fact, all unital commutative \cstar-algebras are of this form and there is a dual equivalence of categories (known as \emph{Gelfand duality}) between compact Hausdorff spaces with continuous maps and unital commutative \cstar-algebras with $*$-homomorphisms.  For this reason, \cstar-algebra theory is often dubbed ``noncommutative topology.''

There are special kinds of elements in \cstar-algebras that will be important throughout this paper.  If $\A$ is a \cstar-algebra, $x\in \A$ is called:
\begin{itemize}
    \item \emph{self-adjoint} if $x^*=x$
    \item \emph{positive} if $x=y^*y$ for some $y\in \A$
    \item a \emph{projection} if $x$ is self-adjoint and $x^2=x$
    \item \emph{unitary} if $x^*x=xx^*=1$.
\end{itemize}

In the case that $\A=\cal B(\cal H)$, the self-adjoint (resp. positive) elements are those with spectrum contained in the reals (resp. the positive reals) while the projections correspond to orthogonal projections onto closed subspaces.  In the case that $\A=C(X)$ for $X$ a compact space, the self-adjoint (resp. positive) elements correspond to the real-valued (resp. positive real-valued) functions while the projections correspond to those functions which take only the values $0$ or $1$.

\subsection{Let's be positive}\label{sec3:let's}

An important role in this story is played by maps between \cstar-algebras which are not necessarily $*$-homomorphisms but still preserve some remnants of the \cstar-algebra structure. It is hard to truly appreciate the importance of these maps without getting into the details of the results to follow, but we introduce the terminology in order to be able to follow the definitions and theorems.

First, we say that a linear map $\phi:\cal A\to \cal B$ between \cstar-algebras is \emph{positive} if it maps positive elements to positive elements.  Note that a $*$-homomorphism is positive:  $\phi(a^*a)=\phi(a)^*\phi(a)$.

For many purposes, a stronger version of positivity is needed.  First, for any \cstar-algebra $\A$ and any $n\geq 1$, we let $M_n(\A)$ denote the set of $n\times n$ matrices with entries from $\A$.  We can view $M_n(\A)$ as a $*$-subalgebra of $\cal B(\bigoplus_{i=1}^n \cal H)$ and it is readily verified that, under this identification, $M_n(\A)$ is closed in the operator norm topology, that is, $M_n(\A)$ is a \cstar-algebra once again.  Note also that a linear map $\phi:\A\to \cal B$ induces a linear map $\phi_n:M_n(\A)\to M_n(\cal B)$ given by $\phi_n(a_{ij})=(\phi(a_{ij}))$.  We say that $\phi$ is \emph{completely positive} if each $\phi_n$ is a positive map.  If, in addition, $\phi(1)=1$, we say that $\phi$ is \emph{unital, completely positive}, or \emph{ucp} for short.  A $*$-homomorphism $\phi:\A \to \cal B$ is easily seen to induce $*$-homomorphisms $\phi_n:M_n(\A)\to M_n(\cal B)$, whence $*$-homomorphisms are ucp.  It can be shown that if $\cal A$ is commutative, then any positive map $\phi:\cal A\to \cal B$ is automatically completely positive.

In a similar vein, one says that $\phi$ as above is \emph{completely bounded} (resp. \emph{completely contractive}) if each $\phi_n$ is bounded (resp. contractive).

A fundamental theorem of Stinespring says that ucp maps are not too far away from being $*$-homomorphisms.  More precisely, consider the following situation:  suppose that $\cal H$ and $\cal K$ are Hilbert spaces and $V:\cal H\to \cal K$ is an isometry, that is, a linear map for which $V^*V=I_{\cal H}$ (or, in other words, $\langle V\xi,V\xi\rangle=\langle \xi,\xi\rangle$ for all $\xi\in \cal H$).  Then for any representation $\pi:\A\to \cal B(\cal K)$ of $\A$, we have a map $\phi:\A \to \cal B(\cal H)$ given by $\phi(a)(\xi):=V^*(\pi(a)(V\xi))$ which is readily verified to be ucp.  The Stinespring dilation theorem says that all ucp maps $\phi:\A\to \cal B(\cal H)$ arise in this way.  A particular consequence of this theorem is that ucp maps are completely contractive.  The relevance of Stinespring's theorem to our story is that certain results that hold somewhat immediately for $*$-homomorphisms will also hold for ucp maps (see Subsection \ref{sec3:problem} below).

We mention in passing that cp maps play an important role in quantum information theory.  Indeed, one perspective on a quantum state (say on a finite-dimensional state space) is that of a positive matrix of trace $1$, corresponding to the density matrix of some mixed state.  A \emph{quantum channel} is a linear map that is to represent some allowable physical transformation on quantum states.  In particular, if it is to map density matrices to density matrices, then it should be trace-preserving and positive.  However, often one needs to add ``ancilla bits'' to a given state and then apply the correspondiing quantum channel.  The desire to have the resulting matrix be a density matrix again is equivalent to the requirement that the quantum channel be completely positive instead of merely positive.  The reader can find more details in, for example, Vern Paulsen's lecture notes \cite{paulsen}.

\subsection{Introducing von Neumann algebras}\label{sec3:introducingV}  We now turn our attention to the other kind of operator algebra, the von Neumann algebra.  Once again, we have a choice between a concrete definition and an abstract definition.  A \emph{concrete von Neumann algebra} is a unital $*$-subalgebra of $\cal B(\cal H)$ closed in the \emph{weak operator topology} (WOT), where the WOT on $\cal B(\cal H)$ has as a subbasic open neighborhood of $T\in \cal B(\cal H)$ those sets of the form $\{S\in \cal B(\cal H) \ : \ |\langle T\xi,\eta\rangle-\langle S\xi,\eta\rangle|<\epsilon\}$, where $\xi$ and $\eta$ range over $\cal H$ and $\epsilon$ ranges over positive real numbers.  It is easy to check that the WOT is a finer topology on $\cal B(\cal H)$ than the operator norm topology, whence every concrete von Neumann algebra is a (unital) concrete \cstar-algebra.  

A theorem of Sakai allows for an abstract reformulation:  a (unital) abstract \cstar-algebra $\cal M$ is isomorphic (as an abstract \cstar-algebra) to a concrete von Neumann algebra if and only if $\cal M$ is isometrically isomorphic to a dual Banach space, that is, if and only if there is a closed subspace $X\subseteq \cal M^{**}$ such that $\cal M=X^*$ isometrically.  In this case, $X$ is unique  and is called the \emph{predual} of $\cal M$, denoted $\cal M_*$.

Von Neumann's \emph{bicommutant theorem} allows for a purely algebraic reformulation of being a von Neumann algebra that is incredibly important to the theory.  First, given a subset $\cal S\subseteq \cal B(\cal H)$, set $\cal S':=\{T\in \cal B(\cal H) \ : \ TS=ST \text{ for all }S\in \cal S\}$, the so-called \emph{commutant of $\cal S$}.  Note that for any set $\cal S$, we have that $\cal S'$ is a von Neumann subalgebra of $\cal B(\cal H)$ and that $\cal S\subseteq \cal S'':=(\cal S')'$.  von Neumann's bicommutant theorem states that for any unital $*$-subalgebra $\cal A$ of $\cal B(H)$, $\cal A''$ coincides with the WOT-closure of $\cal A$ in $\cal B(\cal H)$; this common algebra is the von Neumann algebra generated by $\cal A$.  In fact, the bicommutant theorem shows that both of these coincide with the closure of $\cal A$ in the \emph{strong operator topology} (SOT), where the SOT is the topology on $\cal H$ where a subbasic open neighborhood of $T\in \cal B(\cal H)$ is the set $\{S\in \cal B(\cal H) \ : \ \|(T-S)\xi\|<\epsilon\}$, where $\xi$ ranges over $\cal H$ and $\epsilon$ ranges over the positive real numbers.  A consequence of the bicommutant theorem is that a unital $*$-subalgebra $\cal M$ of $\cal B(\cal H)$ is a von Neumann algebra if and only if $\cal M=\cal M''$.

A von Neumann algebra $\cal M$ is called \emph{separable} if its bidual is a separable Banach space.  This is equivalent to having a concrete representation of $\cal M$ on a separable Hilbert space, whence one sometimes calls such a von Neumann algebra \emph{separably acting}.

Just as in the case of \cstar-algebras, we can completely characterize the commutative von Neumann algebras.  Given a $\sigma$-finite measure space $(X,\mu)$, we can view $L^\infty(X,\mu)\subseteq \cal B(L^2(X,\mu))$ by identifying $f\in L^\infty(X,\mu)$ with $M_f\in \cal B(L^2(X,\mu))$ given by $M_f(g):=fg$.  It is an exercise to check that $L^\infty(X,\mu)=L^\infty(X,\mu)'$, whence $L^\infty(X,\mu)$ is a commutative von Neumann algebra.  Moreover, all commutative von Neumann algebras have this form, whence von Neumann algebra theory is often dubbed ``noncommutative measure theory.''

While $*$-homommorphisms between von Neumann algebras are automatically contractive with respect to the operator norm (as they are \cstar-algebras), since the relevant topology for defining von Neumann algebras is the WOT, the appropriate continuity condition relates to this latter topology.  More precisely, a positive linear map $\Phi:\cal M\to \cal N$ between von Neumann algebras is \emph{normal} if the restriction of $\Phi$ to the operator norm unit ball of $\cal M$ is continuous when $\cal M$ and $\cal N$ are equipped with their WOT topologies.  (The restriction to the operator norm unit ball may seem slightly unsightly; this is equivalent to saying that $\Phi$ is continuous when both $\cal M$ and $\cal N$ are equipped with their weak*-topologies when viewed as dual Banach spaces.)  One can reformulate normality in an intrinsic way that does not refer to the particular realization of $\cal M$ and $\cal N$:  $\Phi:\cal M\to \cal N$ is normal if and only if it is positive, linear, and $\Phi(\sup_{i\in I}x_i)=\sup_{i\in I}\Phi(x_i)$ for every bounded increasing net $(x_i)_{i\in I}$ of positive elements in $\cal M$.

\subsection{States and traces}\label{sec3:states}

Fix a compact Hausdorff space $X$.  Given a complex Borel measure $\mu$ on $X$, we can consider the associated integration functional $I_\mu\in C(X)^*$ given by $I_\mu(f):=\int_X fd\mu$ which satisfies $\|I_\mu\|=\|\mu\|$, where $\|\mu\|$ denotes the total variation norm of $\mu$.   Setting $M(X)$ to be the Banach space of complex Borel measures on $X$,  the Riesz representation theorem implies that this association yields an isomorphism $C(X)^*\cong M(X)$.  Moreover, the probability measures $\mu$ on $X$ correspond to those $I\in C(X)^*$ for which $I$ is a positive map satisfying $I(1)=1$.

More generally, given a unital \cstar-algebra $\A$, a \emph{state} on $\A$ is a positive linear functional $\phi$ on $\A$ with $\phi(1)=1$.  Thus, the states on a unital abelian \cstar-algebra $C(X)$ correspond to the integration functionals associated to probability measures on $X$ and one thinks of states on arbitrary \cstar-algebras as the abstract analog of such an integral.

The states on $\A$ form a convex, closed subset $\frak S(\A)$ of $\A^*$.  The extreme points of $\frak S(\A)$ are referred to as \emph{pure states}.  By the Krein-Milman theorem, finite convex combinations of pure states are dense in the space of all states.  The pure states on $\cal B(\cal H)$ are the \emph{vector states}, that is, the states of the form $T\mapsto \langle T \xi,\xi\rangle$ for some $\xi\in \cal H$.  

A consequence of the Hahn-Banach theorem is the fact that, for any \cstar-algebra $\A$ and any self-adjoint element $a\in \A$, we have $\|a\|=\sup_{\phi\in \frak S(\A)}|\phi(a)|$.  Another consequence of the Hahn-Banach theorem is that whenever $\cal A$ is a subalgebra of $\cal B$, any state on $\A$ can be extended to a state on $\cal B$.

When $\cal H$ is finite-dimensional, every state on $\cal B(\cal H)$ is of the form $T\mapsto \operatorname{Tr}(T\rho)$ for a unique positive operator $\rho$ of trace $1$, where $\operatorname{Tr}$ denotes the trace of an operator.  The operator $\rho$ is often called the \emph{density matrix} for the state.  When $\cal H$ is not necessarily finite-dimensional, the same result holds true for states on $\cal B(\cal H)$ that are continuous with respect to the weak*-topology on $\cal B(\cal H)$, except that $\rho$ is now stipulated to be a trace-class operator (see, for example, \cite[Theorem 19.9]{Hall}). 

The term state comes from quantum mechanics.  A first introduction to quantum mechanics will introduce a state of a physical system as simply a unit vector $\xi$ in the Hilbert space $\cal H$ associated to the physical system.  This usage of the word state corresponds to the vector states described above.  Later, one then encounters the notion of \emph{mixed state} (to accomodate for the fact that results of quantum measurements are merely probabilistic ``ensembles'' of pure states), which is often defined in terms of the density matrix as defined above.  The role of a state in quantum mechanics is simply to assign expected values of observables (see \cite[Section 19]{Hall}).    

An important construction associated to a state is the \emph{GNS construction}, which associates a representation of the \cstar-algebra to the state.  Suppose that $\phi$ is a state on $\A$.  We define a sesquilinear form $\langle \cdot,\cdot\rangle_\phi$ on $\A$ by $\langle x,y\rangle_\phi:=\phi(y^*x)$.  It is straightforward to check that this is a so-called pre-inner product on $\A$ in that it satisfies all of the properties of being an inner product except that $\langle x,x\rangle_\phi=0$ need not necessarily imply that $x=0$; when $\langle,\rangle_\phi$ is actually an inner product, we say that $\phi$ is \emph{faithful}.  Associated to $\langle \cdot,\cdot\rangle_\phi$ is the semi-norm $\|\cdot\|_\phi$ on $\A$ given by $\|x\|_\phi:=\sqrt{\langle x,x\rangle_\phi}$.  We obtain a Hilbert space $L^2(\A,\phi)$ by quotienting out by the closed subspace of vectors with $\|\cdot\|_\phi=0$ and then taking the completion.  Given $a\in \A$, we let $\hat a$ denote its equivalence class in $L^2(\A,\phi)$.  It follows that there is a representation $\pi_\phi:\A\to L^2(\A,\phi)$ uniquely determined by the condition $\pi_\phi(a)(\hat b):=\hat{ab}$ for all $a,b\in \A$.  The representation $\pi_\phi$ is \emph{cyclic}, meaning that there is a vector $\xi\in L^2(\A,\phi)$ for which $\overline{\{\pi_\phi(a)\xi \ : \ a\in \A\}}=L^2(\A,\phi)$, namely $\xi=\hat 1$.  Moreover, the vector state $\langle \cdot \hat 1,\hat 1\rangle_\phi$ on $L^2(\A,\phi)$ restricts to $\phi$ on the image of $\A$.  Note that $\pi_\phi$ is a faithful representation precisely when $\phi$ is faithful.

There is a converse to the above construction:  if $\pi:\A\to \cal B(\cal H)$ is a cyclic representation of $\A$ with cyclic vector $\xi\in \cal H$, then one obtains a state $\phi_\pi$ on $\A$ by $\phi_\pi(a):=\langle \pi(a)\xi,\xi\rangle$ and the GNS representation associated to $\phi_\pi$ is unitarily equivalent to $\pi$.

Define $\cal H_u:=\bigoplus_{\phi\in \frak S(\cal A)}L^2(\A,\phi)$ and set $\pi_u=\bigoplus_{\phi\in \frak S(\cal A)}\pi_\phi:\A\to \cal H_u$, which we call the \emph{universal representation} of $\A$.  Since $\|a\|=\sup_{\phi\in \frak S(\A)}|\phi(a)|$ for any self-adjoint $a\in \A$, it follows that $\pi_u$ is a faithful representation of $\A$.  Since any representation of $\A$ is a direct sum of cyclic representations and since every cyclic representation of $\A$ is, up to unitary equivalence, of the form $\pi_\phi$ for some $\phi\in \frak S(\A)$, we see that every representation of $\A$ is unitarily equivalent to a subrepresentation of $\pi_u$, whence the name!  

An important ingredient in this story is the von Neumann algebra $\pi_u(\A)''$ generated by the image of $\A$ in $\cal B(\cal H_u)$.  It can be shown that this von Neumann algebra is isometrically isomorphic to the Banach space $\A^{**}$, whence it is this notation that is usually used.  

A state $\phi$ on $\A$ is called a \emph{tracial state} if $\phi(ab)=\phi(ba)$ for all $a,b\in \A$.  For example, the normalized trace $\tr$ on $M_n(\bb C)$ given by $\tr(a):=\frac{1}{n}\operatorname{Tr}(a)$ is a tracial state on $M_n(\bb C)$.  

Since $\bb C$ is a von Neumann algebra, it makes sense to speak of \emph{normal} states on von Neumann algebras.  A faithful normal tracial state on a von Neumann algebra is simply referred to as a \emph{trace}.  A von Neumann algebra is called \emph{finite} if it admits a trace.  (This terminology makes much more sense if you introduce Murray-von Neumann equivalence of projections.)  A \emph{tracial von Neumann algebra} is a pair $(\cal M,\tau)$, where $\cal M$ is a von Neumann algebra and $\tau$ is a trace on $\cal M$.  An embedding of tracial von Neumann algebras is a normal, injective $*$-homomorphism that preserves the trace. The normalized trace $\tr$ on $M_n(\bb C)$ is a trace (in the von Neumann algebra sense) on $M_n(\bb C)$.  However, when $\cal H$ is infinite-dimensional, there is no trace on $\cal B(\cal H)$.  

Suppose that $\tau$ is a trace on $\cal M$.  The corresponding representation $\pi_\tau:\cal M\to \cal B(L^2(\cal M,\tau))$ is normal and the image of $\pi_\tau(M)$ is WOT-closed in $\cal B(L^2(\cal M,\tau))$.  Also, $\cal M$ is separable if and only if it is separable with respect to the metric stemming from the norm $\|\cdot\|_\tau$.  When restricted to the unit ball of $\cal M$, the topology induced by $\|\cdot\|_\tau$ coincides with the SOT on $\cal M$ it inherits from $L^2(\cal M,\tau)$.

We end this section by showing how traces can be used to define the hyperfinite II$_1$ factor, the star of this paper!

Given $n\geq 1$, there is a natural embedding of tracial von Neumann algebras $M_{2^n}(\bb C)\hookrightarrow M_{2^{n+1}}(\bb C)$ given by $A\mapsto \left(\begin{matrix} A & 0 \\ 0 & A\end{matrix}\right)$.  In this way, we obtain a directed system of tracial von Neumann algebras whose union is a $*$-algebra we denote by $\cal M:=\bigcup_{n}M_{2^n}(\bb C)$.   The fact that the embeddings preserve the normalized traces on the individual matrix algebras implies that $\cal M$ has a tracial state $\tau$ on it.  We apply the GNS construction to $\tau$ (which still works even though the original algebra is not necessarily complete) and take the von Neumann algebra generated by $\pi_\tau(M)$ inside of $\cal B(L^2(\cal M,\tau))$.  This von Neumann algebra is called the \emph{hyperfinite II$_1$ factor} $\cal R$.  We will see the reason for the ``II$_1$ factor'' in the name in the next section but the terminology \emph{hyperfinite} can be explained now.  A separable von Neumann algebra is called \emph{hyperfinite} if it contains an increasing union of finite-dimensional subalgebras whose union is WOT-dense.  Murray and von Neumann showed that there is a unique separable hyperfinite II$_1$ factor.  Consequently, if we started the above construction with any $M_n(\bb C)$ instead of $M_2(\bb C)$, we would have arrived at the same II$_1$ factor, namely $\R$.

\subsection{More on von Neumann algebras}\label{sec3:more}

The \emph{center of a von Neumann algebra} $\cal M$ is $Z(\cal M):=\cal M\cap \cal M'=\{x\in \cal M \ : \ xy=yx \text{ for all }y\in \cal M\}$.  A von Neumann algebra $M$ is called a \emph{factor} when its center is trivial, that is when $Z(\cal M)=\bb C\cdot 1$.  It is quite easy to see that $\cal B(\cal H)$ is a factor; in particular, each $M_n(\bb C)$ is a factor.  This makes it plausible that $\R$ is also a factor, given that it is the completion of an increasing sequence of factors; one just needs to check that no elements snuck into the center at the completion stage.

The interest in factors comes from the fact that they are the ``building blocks'' of all von Neumann algebras in the sense that every von Neumann algebra can be written as a direct integral (a generalization of direct sum) of factors and thus the study of arbitrary von Neumann algebras can usually be reduced to studying factors.

Murry and von Neumann divided the collection of factors into three types, (creatively) called types I, II, and III.  They further split the first two types into subtypes as follows.  First, for each $n\in \bb N$, there is a unique factor of type I$_n$, namely $M_n(\bb C)$.  The unique factor of type I$_\infty$ is $\cal B(\cal H)$ for $\cal H$ infinite-dimensional.  Next, a II$_1$ factor is an infinite-dimensional finite factor, that is, an infinite-dimensional factor that admits a trace.  Thus, the hyperfinite II$_1$ factor is indeed a II$_1$ factor.  A II$_\infty$ factor is one that can be written as a proper increasing union of type II$_1$ factors.  Equivalently, a II$_\infty$ factor can be written in the form $\cal M\bar \otimes \cal B(\cal H)$ for some II$_1$ factor $\cal M$ (see Subsection \ref{sec3:problem} below for the definition of tensor products of von Neumann algebras).  There is also division of type III factors into subtypes III$_\lambda$ for $\lambda\in [0,1]$, but we will not need to get into that here.

It is important to note that, while an arbitrary finite von Neumann algebra may have many traces, the trace on a finite factor is unique.  We also note the crucial fact that $\R$ embeds into any II$_1$ factor.

As alluded to above when defining finite von Neumann algebras, the above type classification makes more sense in the context of Murray-von Neumann equivalence.  However, we can still see understand this division using traces.  Given a von Neumann algebra $\cal M$, let $\cal P(\cal M)$ denote the set of projections in $\cal M$.  If $\tau$ is a trace on $\cal M$, set $\tau(\cal P(\cal M)):=\{\tau(p) \ : \ p\in \cal P(\cal M)\}$.  Note that, for the unique type I$_n$ factor $M_n(\bb C)$, we have $\tau(\cal P(M_n(\bb C)))=\{0,\frac{1}{n},\ldots,\frac{n-1}{n},1\}$, one value for every dimension being projected onto.  On the other hand, for a II$_1$ factor $\cal M$, one can show that $\tau(\cal P(\cal M))=[0,1]$.  Thus, we think of II$_1$ factors being like matrix factors in that they admit traces, but now we have a ``continuous'' dimension for projections.

One can explain the cases I$_\infty$ and II$_\infty$ using traces if one considers the unnormalized trace $\operatorname{Tr}$ on $\cal B(\cal H)$ given by $\operatorname{Tr}(T):=\sum_{i\in I}\langle T\xi_i,\xi_i\rangle$, where $(\xi_i)_{i\in I} $ is any orthonormal basis for $\cal H$.  Note that $\operatorname{Tr}$ can take the value $\infty$.  We then have that the possible traces of projections for the I$_\infty$ factor are $\{0,1,2,\ldots,\}\cup\{\infty\}$ while in the type II$_\infty$ case they are $[0,\infty]$.

\subsection{The tracial ultrapower construction and the official statement of the CEP}\label{sec3:tracial}

In general, an ultraproduct of a collection of ``similar'' structures is a structure of the same type that represents some sort of ``limit'' of these structures.  There are ways of making this precise using model theory and we will discuss this later in Subsection \ref{sec7:model}.  In this section, we show how to carry this construction out in the case of tracial von Neumann algebras and see how this allows us to precisely state the CEP.

First, one needs to introduce the notion of an \emph{ultrafilter}.  Given an index set $I$, an ultrafilter $\u$ on $I$ is simply a $\{0,1\}$-valued finitely additive probability measure on $I$.  One often identifies $\u$ with its set of measure $1$ sets and writes $X\in \u$ instead of $\u(X)=1$.  Following typical measure-theoretic terminology, given a property $P$ that may or may not hold of elements of $I$, we may write ``for $\u$-almost all $i\in I$, $P(i)$ holds'' when $\{i\in I \ : \ P(i) \text{ holds}\}$ belongs to $\u$.

Given a bounded sequence $(z_i)_{i\in I}$ of complex numbers, it is straightforward to show that there is a unique complex number $z$ such that, for every $\epsilon>0$, we have $|z-z_i|<\epsilon$ for $\u$-almost all $i\in I$.  This unique complex number $z$ is called the \emph{$\u$-ultralimit} of the sequence $(z_i)_{i\in I}$, denoted $\lim_{i,\u}z_i$ or simply $\lim_{\u} z_i$.

Given $j\in I$, the unique ultrafilter $\u$ on $I$ for which $\u(\{j\})=1$ is called the \emph{principal ultrafilter generated by $j$}, denoted $\u_j$.  An ultrafilter $\u$ on $I$ is called \emph{nonprincipal} if it is not principal.  Equivalently, $\u$ is nonprincipal if $\u(X)=0$ for all finite $X\subseteq I$.  It is easy to check that $\lim_{\u_j}z_i=z_j$, whence ultralimits along principal ultrafilters do not really capture a genuine notion of limit.  It is a basic fact that, for any infinite set $I$, there is a nonprincipal ultrafilter $\u$ on $I$.

We now come to the tracial ultraproduct construction.  Fix a family $(\cal M_i,\tau_i)_{i\in I}$ of tracial von Neumann algebras and an ultrafilter $\u$ on $I$.  We first set $$\ell^\infty(\cal M_i):=\left\{x\in \prod_{i\in I}\cal M_i \ : \ \sup_{i\in I}\|x(i)\|<\infty\right\},$$ that is, $\ell^\infty(\cal M_i)$ collects all those sequences from the Cartesian product $\prod_{i\in I} \cal M_i$ for which the operator norms of the coordinates are uniformly bounded.  It is readily verified that $\ell^\infty(\cal M_i)$ is a \cstar-algebra under the supremum norm.  It is tempting to try to define a tracial state $\tau$ on $\ell^\infty(\cal M_i)$ by declaring $\tau(x):=\lim_{\u}\tau_i(x(i))$, which makes sense given that the sequence $(\tau_i(x(i)))_{i\in I}$ is a uniformly bounded sequence of complex numbers (a consequence of the uniform bound on the operator norms of the coordinates of $x$).  Unfortunately, if each $x(i)$ is a positive element of $\cal M_i$, whence $x$ is a positive element of $\ell^\infty(\cal M_i)$, with the property that $\lim_\u \tau_i(x(i))=0$, then we have that $\tau(x)=0$ even though $x$ may not be zero.  In other words, this definition leads to a tracial state on $\ell^\infty(\cal M_i)$ that is not faithful.

We fix the above problem by defining $c_\u:=\{x\in \ell^\infty(\cal M_i) \ : \ \lim_\u \tau_i(x(i))=0\}$.  While there are a lot of things to check, we have that:
\begin{itemize}
    \item $c_\u$ is a two-sided ideal in $\ell^\infty(\cal M_i)$,
    \item $\ell^\infty(\cal M_i)/c_\u$ is a von Neumann algebra, and
    \item the induced tracial state $\tau$ on $\ell^\infty(\cal M_i)/c_\u$ given by $\tau([x]_\u):=\lim_\u \tau_i(x(i))$ is a trace (that is, a faithful, normal tracial state) on $\ell^\infty(\cal M_i)/c_\u$, where $[x]_{\u}$ denotes the coset of $x$ modulo $c_\u$.
\end{itemize}

The resulting tracial von Neumann algebra is denoted $(\prod_{\u}\cal M_i,\lim_\u \tau_i)$ and is called the \emph{tracial ultraproduct} of the family $(\cal M_i,\tau_i)$ with respect to $\u$.  When each $\cal M_i$ is a finite factor, then the trace on each factor is unique and we simplify the notation to $\prod_\u \cal M_i$.  When each $(\cal M_i,\tau_i)=(\cal M,\tau)$ equals a common tracial von Neumann algebra, we simply write $(\cal M,\tau)^\u$ and speak of the \emph{ultrapower} of $(\cal M,\tau)$ with respect to $\u$.  Similarly, the ultrapower of a finite factor is denoted $\cal M^\u$.  We view any tracial von Neumann algebra $(\cal M,\tau)$ as a subalgebra of $(\cal M,\tau)^\u$ via the \emph{diagonal embedding} which maps an element $a\in \cal M$ to the coset of the diagonal sequence $(i\mapsto a)$ modulo $c_\u$.

When $\u=\u_j$ is principal, one can verify that $\prod_\u (\cal M_i,\tau_i)\cong (\cal M_j,\tau_j)$, whence this is not a terribly interesting procedure.  The true power of the ultraproduct construction comes when one uses a nonprincipal ultrafilter, for then the ultraproduct is sort of an ``average'' or ``limit'' of the constitutent tracial von Neumann algebras.

If $\lim_\u \dim(\cal M_j)<\infty$, then $\prod_\u (\cal M_i,\tau_i)$ is also finite-dimensional.  Otherwise, $\prod_\u (\cal M_i,\tau_i)$ is quite large, in fact, non-separable, even if each $\cal M_i$ is separable.  It is quite common to hear expressions such as ``every II$_1$ factor in this paper (or talk) is separable unless it isn't.''  This tautology refers to the fact that often researchers are only interested in separable tracial von Neumann algebras and the only nonseparable II$_1$ factors that one might encounter are those obtained from a nonprincipal ultraproduct of a family of separable II$_1$ factors.  (We are being a bit sloppy:  nonprincipality only guarantees non-separability when the index set is countable; otherwise, one needs the mild assumption of \emph{countable incompleteness}.)

We can now officially state the:

\noindent \textbf{Connes Embedding Problem}: Given any nonprincipal ultrafilter $\u$ on $\bb N$, every separable tracial von Neumann algebra embeds into $\cal R^\u$, that is, admits a trace-preserving injective $*$-homomorphism into $\R^\u$.

Let us make several remarks on variations of the statement of the CEP:
\begin{itemize}
    \item It can be shown using some basic model theory that the CEP is equivalent to the statement that every separable tracial von Neumann algebra embeds into $\cal R^{\cal U}$ for \emph{some} nonprincipal ultrafilter $\cal U$ on $\bb N$.  This fact can also be witnessed by using a simple ultrafilter-free equivalent reformulation of the CEP known as the \emph{Microstate Conjecture}, discussed below. 
    \item The restriction to separable tracial von Neumann algebras is not necessary.  It can be shown that ultrapowers of $\cal R$ with respect to certain kinds of ultrafilters on larger index sets known as \emph{good ultrafilters} lead to larger ultrapowers that can embed tracial von Neumann algebras of larger density character.  In other words, we can reformulate CEP by saying every tracial von Neumann algebra embeds into some ultrapower of $\R$.  
    \item The validity of the CEP does not change if we restrict to embedding II$_1$ factors into an ultrapower of $\R$.  The reason for this is due to the fact that every tracial von Neumann algebra $(\cal M,\tau)$ embeds into a II$_1$ factor, say $(\cal M*L(\bb Z),\tau*\tau_{L(\bb Z)})$; here $L(\bb Z)$ is the group von Neumann algebra of the group of integers (see Subsection \ref{sec3:operator} below) and $*$ denotes the \emph{free product} of tracial von Neumann algebras.
    \item One can replace $\cal R^\u$ with a nonprincipal ultraproduct of matrix algebras without changing the validity of the CEP.  More precisely, CEP is equivalent to the statement that, for any nonprincipal ultrafilter $\u$ on $\bb N$, every separable tracial von Neumann algebra embeds into $\prod_\u M_n(\bb C)$.  This follows from the fact that each $M_n(\bb C)$ embeds in $\cal R$, whence $\prod_\u M_n(\bb C)$ embeds in $\cal R^\u$, while there are conditional expectations $\Phi_n:\cal R\to M_n(\bb C)$ and the ultralimit of these expectations yields an embedding $\lim_\u \Phi_n:\cal R^\u\hookrightarrow \prod_\u M_n(\bb C)$.  (See Subsection \ref{sec3:kirchberg's} below for the definition of conditional expectation.)
\end{itemize} 

  The last alternate reformulation makes the equivalence with the so-called \emph{Microstates Conjecture} more apparent.  The Microstate Conjecture states that:  for any tracial von Neumann algebra $(\cal M,\tau)$, any finite collection $p_1(x),\ldots,p_m(x)$ of $*$-polynomials in the noncommuting variables $x=(x_1,\ldots,x_n)$, any $a_1,\ldots,a_n\in \cal M$ in the operator norm unit ball of $\cal M$, and any $\epsilon>0$, there is $k\in \bb N$ and $b_1,\ldots,b_n\in M_k(\bb C)$ in the operator norm unit ball such that
$$\max_{1\leq i\leq n}|\tau(p_i(a))-\tr(p_i(b))|<\epsilon.$$  In other words, any ``finite configuration'' that can be obtained in some tracial von Neumann algebra can be approximately obtained in some matrix algebra.  It is this formulation of CEP that appeared in connection with free entropy as discussed in Subsection \ref{sec2:free} above.

\subsection{Operator algebras coming from groups}\label{sec3:operator}

A large source of operator algebras arise from groups and these algebras play an important role in our story.

First, a \emph{unitary representation} of a (discrete) group $G$ is a group homomorphism $\pi:G\to \cal U(\A)$, where $\cal A$ is a \cstar-algebra and $\cal U(\cal A)$ denotes the group of unitary elements of $\A$.

Suppose that $G$ is a group.  Let $\ell^2(G)$ be the Hilbert space formally generated by an orthonormal basis $\zeta_h$ for all $h \in G$.  For any $g \in G$, define $u_g$ to be the linear operator on $\ell^2(G)$ determined by $u_g(\zeta_h) = \zeta_{gh}$ for all $h \in G$.  Notice that $u_g$ is unitary for all $g \in G$ (since $u_g^* = u_g^{-1} = u_{g^{-1}})$ and so $\lambda:G\to \cal U(\ell^2(G))$ given by $\lambda(g):=u_g$ is a unitary representation of $G$, called the \emph{left regular representation} of $G$.

Recall that the group algebra $\bb C[G]$ consists of formal linear combinations $\sum_{g\in G}c_gg$ with only finitely many nonzero coefficients.  There is a natural $*$-algebra structure on $\bb C[G]$, the addition and multiplication being the obvious ones and the $*$-operation being given by $(\sum_{g\in G}c_gg)^*=\sum_{g\in G}\overline{c_g}g^{-1}$.  $\bb C[G]$ is in fact a unital $*$-algebra with unit $e$, where $e$ denotes the identity of the group.

The left regular representation $\lambda$ of $G$ extends by linearity to a unital $*$-algebra homomorphism $\pi:\bb C[G]\to \cal B(\ell^2(G))$.

The \emph{reduced group \cstar-algebra of $G$}, denoted $C_r^*(G)$, is the closure of $\pi(\bb C[G])$ in the operator norm topology on $\cal B(\ell^2(G))$.  The \emph{group von Neumann algebra of $G$}, denoted $L(G)$, is the closure of $\pi(\bb C[G])$ in the WOT on $\cal B(\ell^2(G))$.  Moreover, the vector state on $\cal B(\ell^2(G))$ corresponding to $\xi_e$ yields a tracial state on $C^*_r(G)$ and a trace on $L(G)$.

When $G$ is finite, $C^*_r(G)=L(G)=\bb C[G]$ and is generally considered uninteresting (to operator algebraists).  When $G$ is infinite, $L(G)$ is a II$_1$ factor precisely when $G$ is an \emph{ICC group}, that is, when all nontrivial conjugacy classes of $G$ are infinite.

The procedure of taking the group von Neumann algebra of a group can ``forget'' a lot of the algebraic structure of the group.  For example, it follows from Connes' fundamental work \cite{Connes} that all ICC amenable groups have group von Neumann algebra isomorphic to $\R$.

In the sequel, the reduced group \cstar-algebra of a group is not quite as important as a second \cstar-algebra associated to a group, the so-called \emph{universal (or maximal) group \cstar-algebra}.  To define this, we define a norm on $\bb C[G]$ by defining
$$\left\|\sum_{g\in G}c_gu_g\right\|=\sup\left\{\left\|\sum_{g\in G}c_g\pi(g)\right\| \ : \ \pi:G\to U(\A) \text{ a unitary representation of }G\right\}.$$  It is readily verified that this is a well-defined (that is, finite) \cstar-norm on $\bb C[G]$.  The completion of $\bb C[G]$ with respect to this norm is thus a \cstar-algebra, called the universal \cstar-algebra associated to $G$, denoted $C^*(G)$.  Since the above norm is easily seen to be the maximal \cstar-norm on $\bb C[G]$, it is sometimes called the maximal norm on $\bb C[G]$ and the completion the maximal group \cstar-algebra of $G$.  It follows immediately from the definition that any unitary representation $\pi:G\to \cal U(\A)$ of $G$ extends uniquely to a $*$-homomorphism $\pi:C^*(G)\to \A$.  

A particular corollary of this universal property of the universal group \cstar-algebra is that $C^*(\bb F_\infty)$ is \emph{surjectively universal}, where $\bb F_\infty$ is the free group on a countably infinite set of generators.  More precisely, given any separable \cstar-algebra $\A$, there is a surjective $*$-homomorphism $C^*(\bb F_\infty)\to \A$.  To see that this is the case, just note that there is a countable set $\{u_i \ : \ i\in \bb N\}$ of unitaries that generates $\A$ (as a \cstar-algebra); now apply the universal property to the surjective unitary representation $\bb F_\infty \to \cal U(\A)$ obtained by mapping the $i^{\text{th}}$ basis element of $\bb F_\infty$ onto $u_i$.

Another consequence of the universal property is that if $f:G\to H$ is a group homomorphism, then we get an induced $*$-algebra homomorphism $C^*(f):C^*(G)\to C^*(H)$.  A less obvious fact is that if $H$ is a subgroup of $G$, then $C^*(H)$ is naturally a \cstar-subalgebra of $C^*(G)$.  This follows immediately from the definitions once one knows that any unitary representation of $H$ can be extended to a unitary representation of $G$ (via a technique known as \emph{induction}; see \cite[Chapter 6]{folland}).

By considering the left-regular representation of $G$, we immediately see that there is a canonical surjective $*$-homomorphism $C^*(G)\to C^*_r(G)$.  In general, this map is not an isomorphism, that is, it often has nontrivial kernel.  In fact, the canonical map $C^*(G)\to C^*_r(G)$ is an isomorphism precisely when $G$ is amenable.

One final fact will prove useful later:  for any two groups $G$ and $H$, we have $C^*(G*H)\cong C^*(G)*C^*(H)$, where $G*H$ denotes the free product of groups and $C^*(G)*C^*(H)$ denotes the \emph{unital free product of \cstar-algebras}, which is slightly annoying to define but whose properties can be guessed from the terminology.

\subsection{The problem with \cstar-algebra tensor products}\label{sec3:problem}

Before discussing the issues associated with defining tensor products of \cstar-algebras, we first recall the tensor product construction for vector spaces.  Let $V$ and $W$ be vector spaces over the same field $\bb K$.  We let $\bb F(V\times W)$ be the free $\bb K$-vector space on the set $V\times W$, that is, all formal linear combinations $\sum_{(v,w)\in V\times W}c_{(v,w)}(v,w)$ with only finitely many nonzero coefficients.  $\bb F(V\times W)$ carries an obvious $\bb K$-vector space structure.  The tensor product of $V$ and $W$, denoted $V\odot W$, is the quotient of $\bb F(V\times W)$ by the subspace generated by elements of the following form, for $v,v'\in V$, $w,w'\in W$, and $\alpha\in \bb K$:
\begin{itemize}
    \item $(v+v',w)-(v,w)-(v',w)$
    \item $(v,w+w')-(v,w)-(v,w')$
    \item $(\alpha v,w)-\alpha(v,w)$
    \item $(v,\alpha w)-\alpha(v,w)$.
\end{itemize}

While it is more common to write $V\otimes W$ instead of $V\odot W$, we will reserve $\otimes$ for ``analytic'' tensor products (to be defined shortly) and will use $\odot$ for the above ``algebraic'' tensor product.

The equivalence class of $(v,w)$ in $V\odot W$ is denoted $v\otimes w$.  Thus, an arbitrary element of $V\odot W$ may be written as a formal linear combination $\sum_{i=1}^n \alpha_i v_i\otimes w_i$, but not necessarily uniquely.

If $V$ and $W$ are both finite-dimensional, then so is $V\odot W$ with $\dim(V\odot W)=\dim(V)\cdot \dim(W)$; if $\{v_1,\ldots,v_m\}$ is a basis for $V$ and $\{w_1,\ldots,w_n\}$ is a basis for $W$, then $\{v_i\otimes w_j \ : \ 1\leq i\leq m, 1\leq j\leq n\}$ is a basis for $V\odot W$.

It is clear from the construction that if $S:V_1\to V_2$ and $T:W_1\to W_2$ are $\bb K$-linear maps, then there is a $\bb K$-linear map $S\odot T:V_1\odot W_1\to V_2\odot W_2$ uniquely determined by $(S\odot T)(v\otimes w)=S(v)\otimes T(w)$.

If $\cal H$ and $\cal K$ are Hilbert spaces, then the algebraic tensor product $\cal H\odot \cal K$ comes naturally equipped with an inner product uniquely determined by $$\langle \xi_1\otimes \eta_1,\xi_2\otimes \eta_2\rangle=\langle \xi_1,\xi_2\rangle \cdot \langle \eta_1,\eta_2\rangle.$$

The completion of $\cal H\odot \cal K$ with respect to this inner product is then a Hilbert space, denoted $\cal H\otimes \cal K$ and called the \emph{Hilbert space tensor product} of $\cal H$ and $\cal K$.  If $\{e_i\ : \ i\in I\}$ and $\{f_j\ : \ j\in J\}$ are orthonormal bases for $\cal H$ and $\cal K$ respectively, then $\{e_i\otimes f_j \ : \ i\in I, \ j\in J\}$ is an orthonormal basis for $\cal H\otimes \cal K$.  Moreover, if $S:\cal H_1\to \cal H_2$ and $T:\cal K_1\to \cal K_2$ are bounded linear maps, then the algebraic tensor product map $S\odot T$ extends uniquely to a bounded linear map $S\otimes T:\cal H_1\otimes \cal K_1\to \cal H_2\otimes \cal K_2$.

We now come to the task of defining tensor products of operator algebras.  We first note that if $\A$ and $\cal B$ are two $*$-algebras, there is a natural $*$-algebra operation on their algebraic tensor product $\A\odot \cal B$ determined by 
\begin{itemize}
    \item $(x_1\otimes y_1)\cdot (x_2\otimes y_2)=(x_1x_2)\otimes (y_1y_2)$
    \item $(x\otimes y)^*=x^*\otimes y^*$.
\end{itemize}

If $\cal A$ and $\cal B$ are both unital, then so is $\cal A\odot \cal B$ with unit $1\otimes 1$.

The tensor product of von Neumann algebras is fairly uncontroversial.  Consider concretely represented von Neumann algebras $\cal M\subseteq \cal B(\cal H)$ and $\cal N\subseteq \cal B(\cal K)$.  It is straightforward to check that the algebraic tensor product $\cal M\odot \cal N$ is naturally a subset of $\cal B(\cal H\otimes \cal K)$ (using the tensor product of linear transformation construction above) and that the $*$-algebra structure induced by this identification agrees with the one placed on it in the previous paragraph.  The \emph{von Neumann algebra tensor product} $\cal M\bar\otimes \cal N$ of $\cal M$ and $\cal N$ is then the WOT closure of $\cal M\odot \cal N$ in $ \cal B(\cal H\otimes \cal K)$.  One can verify that this construction is indeed independent of the choice of representations of $\cal M$ and $\cal N$.

The story for \cstar-algebras, on the other hand, is far more complicated in general.  Fix \cstar-algebras $\A$ and $\cal B$.  We seek \cstar-norms on $\A\odot \cal B$, for then the completion of $\A\odot \cal B$ with respect to such a \cstar-norm will be \emph{a} \cstar-algebra tensor product of $\A$ and $\cal B$.  

One natural choice is to proceed as in the case of von Neumann algebras, that is, fix concrete representations $\A\subseteq \cal B(\cal H)$ and $\cal B\subseteq \cal B(\cal K)$ and to consider the operator norm on $\A\odot \cal B\subseteq \cal B(\cal H\otimes \cal K)$.  One can verify that this norm on $\A\odot \cal B$ is a \cstar-norm and is independent of the choice of representations.  This norm is called the \emph{minimal tensor norm}, denoted $\|\cdot\|_{\min}$.  The justification for the name comes from a theorem of Takesaki showing that $\|\cdot\|_{\min}$ is indeed the minimal \cstar-norm on $\A\odot \cal B$.  The completion of $\A\odot \cal B$ with respect to $\|\cdot\|_{\min}$ is denoted $\A\otimes_{\min}\cal B$ and is called the \emph{minimal tensor product} of $\A$ and $\cal B$.  One should be aware of the fact that some authors simply write $\A\otimes \cal B$ instead of $\A\otimes_{\min}\cal B$.  

A useful property of the minimal tensor product is the following result, which follows from the independence of the choice of representation:
If $\pi_\A:\A_1\to \A_2$ and $\pi_{\cal B}:\cal B_1\to \cal B_2$ are $*$-homomorphisms, then the linear map $\pi_\A\odot \pi_{\cal B}:\A_1\odot \cal B_1\to \A_2\odot \cal B_2$ extends uniquely to a $*$-homomorphism $\pi_\A\otimes \pi_{\cal B}:\A_1\otimes_{\min}\cal B_1\to \A_2\otimes_{\min}\cal B_2$.
Using the Stinespring Dilation theorem, one can generalize the conclusion of the previous sentence to ucp maps as follows: if $\Phi_\A:\A\to \cal B(\cal H_{\cal A})$ and $\Phi_{\cal B}:\cal B\to \cal B(\cal H_{\cal B})$ are ucp maps, then there is a unique ucp map $\Phi_{\cal A}\otimes \Phi_{\cal B}:\cal A\otimes_{\min} \cal B\to \cal B(\cal H_{\cal A}\otimes \cal H_{\cal B})$ determined by $(\Phi_{\cal A}\otimes \Phi_{\cal B})(a\otimes b)=\Phi_{\cal A}(a)\otimes \Phi_{\cal B}(b)$.

Another natural \cstar-norm to consider on $\A\odot \cal B$ is the so-called \emph{maximal norm} defined by 
$$\|x\|_{\max}:=\sup\{\|\pi(x)\| \ : \ \pi:\A\odot \cal B\to \cal B(\cal H) \text{ a }*\text{-homomorphism}\}.$$  In connection with this formula, it is useful to observe that a $*$-homomorphism $\pi:\A\odot \cal B\to \cal B(\cal H)$ restricts to $*$-homomorphisms $\pi_\A:\A\to \cal B(\cal H)$ and $\pi_{\cal B}:\cal B\to \cal B(\cal H)$ with commuting ranges and, conversely, any two $*$-homomorphisms $\pi_\A:\A\to \cal B(\cal H)$ and $\pi_{\cal B}:\cal B\to \cal B(\cal H)$ with commuting ranges yield a $*$-homomorphism $\pi_{\A}\odot \pi_{\cal B}:\A\odot \cal B\to \cal B(\cal H)$ uniquely determined by $(\pi_{\cal A}\odot \pi_{\cal B})(x\otimes y):=\pi_{\cal A}(x)\pi_{\cal B}(y)$.  (The commutativity of the ranges of $\pi_{\cal A}$ and $\pi_{\cal B}$ ensure that this map is in fact a $*$-homomorphism.)  It is clear that $\|\cdot\|_{\max}$ is a \cstar-norm on $\A\odot \cal B$; the completion of $\cal A\odot \cal B$ with respect to $\|\cdot\|_{\max}$ is called the \emph{maximal tensor product} of $\A$ and $\cal B$, denoted $\A\otimes_{\max}\cal B$.  Any pair of $*$-homomorphisms $\pi_\A:\A\to \cal B(\cal H)$ and $\pi_{\cal B}:\cal B\to \cal B(\cal H)$ with commuting ranges yields a $*$-homomorphism $\pi_\A\otimes \pi_{\cal B}:\A\otimes_{\max}\cal B\to \cal B(\cal H)$ that extends $\pi_{\cal A}\odot \pi_{\cal B}$.    Consequently, $\|\cdot\|_{\max}$ really is the largest \cstar-norm on $\cal A\odot \cal B$.  Using a more complicated Stinespring argument than the one mentioned above, one can show that any pair of ucp maps $\Phi_\A:\A\to \cal B(\cal H)$ and $\Phi_{\cal B}:\cal B\to \cal B(\cal H)$ with commuting ranges yields a ucp map $\Phi_\A\otimes \Phi_{\cal B}:\A\otimes_{\max}\cal B\to \cal B(\cal H)$ uniquely determined by $(\Phi_\A\otimes \Phi_{\cal B})(a\otimes b)=\Phi(a)\Phi(b)$. 

Before moving forward, we notice the following two facts, which are readily verified from the definitions:  for any pair of groups $G$ and $H$, we have:
\begin{itemize}
    \item $C^*_r(G\times H)\cong C^*_r(G)\otimes_{\min}C^*_r(H)$
    \item $C^*(G\times H)\cong C^*(G)\otimes_{\max}C^*(H)$.
\end{itemize}

Returning to the general discussion, we have defined two ``extreme'' \cstar-norms on $\A\odot \cal B$.  In general, they can be different.  For example, it can be shown that the maximal and minimal norms on $C^*_r(\bb F_2)\odot C^*_r(\bb F_2)$ are distinct.  The corresponding question for $C^*(\bb F_2)\odot C^*(\bb F_2)$ turns out to be equivalent to CEP, as we will soon see!  Another somewhat surprising result is that the maximal and minimal norms on $\cal B(\cal H)\odot \cal B(\cal H)$ (for $\cal H$ infinite-dimensional) are also distinct, a result due to Junge and Pisier \cite{JP}.  In fact, Ozawa and Pisier \cite{OP} showed that there exist at least continuum many different \cstar-norms on $\cal B(\cal H)\odot \cal B(\cal H)$ when $\cal H$ is infinite-dimensional.

We say that $(\A,\cal B)$ form a \emph{nuclear pair} if there is a unique \cstar-norm on $\A\odot \cal B$, that is, if the minimal and maximal norms on $\A\odot \cal B$ coincide.  We also say that $\A$ is \emph{nuclear} if $(\A,\cal B)$ is a nuclear pair for every \cstar-algebra $\cal B$.  There are many interesting examples of nuclear \cstar-algebras.  For example, $M_n(\bb C)$ is nuclear for all $n$, the reason being that $M_n\odot \cal B\cong M_n(B)$, which is already a \cstar-algebra with a unique \cstar-norm.  A more interesting example coming from groups is that $C^*(G)$ is nuclear if and only if $C^*_r(G)$ is nuclear if and only if $G$ is amenable (in which case $C^*(G)=C^*_r(G)$).

The following theorem of Kirchberg \cite{K} will be central moving forward:

\begin{thm}
$(C^*(\bb F_\infty),\cal B(\cal H))$ is a nuclear pair.
\end{thm}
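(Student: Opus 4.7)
The plan is to factor the proof through the notion of the \emph{local lifting property} (LLP): a unital \cstar-algebra $\cal A$ has the LLP provided that, for every unital \cstar-algebra $\cal C$ with closed two-sided ideal $\cal J$, every ucp map $\phi \colon \cal A \to \cal C / \cal J$ restricts on each finite-dimensional operator subsystem $E \subseteq \cal A$ to a ucp map that lifts through the quotient $\cal C \to \cal C / \cal J$. I would split the proof into two parts: first, establish that $C^*(\bb F_\infty)$ has the LLP; second, show that any LLP algebra forms a nuclear pair with $\cal B(\cal H)$. Combined they yield the theorem, and it is in the second part that the injectivity of $\cal B(\cal H)$ (which provides its weak expectation property) plays the decisive role.

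For the first part, I would exploit the defining universal property of $C^*(\bb F_\infty)$: any assignment of countably many unitaries in a target \cstar-algebra determines a unital $*$-homomorphism out of $C^*(\bb F_\infty)$. Given $\phi$, $\cal J \subseteq \cal C$, and a finite-dimensional operator subsystem $E$ spanned (up to any prescribed approximation) by the identity, finitely many generators $u_{i_1}, \ldots, u_{i_n}$, and their adjoints, each image $\phi(u_{i_k})$ is a contraction in $\cal C / \cal J$ and so lifts to a contraction $c_k \in \cal C$ by the very definition of the quotient norm. Form the $2 \times 2$ unitary dilations
\[
U_k := \begin{pmatrix} c_k & (1 - c_k c_k^*)^{1/2} \\ (1 - c_k^* c_k)^{1/2} & -c_k^* \end{pmatrix} \in M_2(\cal C),
\]
and use the universal property to build a $*$-homomorphism $\rho \colon C^*(\bb F_\infty) \to M_2(\cal C)$ with $\rho(u_{i_k}) = U_k$. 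Composing with the ucp $(1,1)$-compression $M_2(\cal C) \to \cal C$ yields a ucp map whose restriction to $E$ lifts $\phi|_E$ on the spanning generators (with the general case of subsystems containing monomials handled by an additional Arveson extension step that threads the partial lifts together).

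For the second part, let $\cal A$ be an LLP algebra and $x \in \cal A \odot \cal B(\cal H)$. Any witness for $\|x\|_{\max}$ corresponds to commuting representations $\pi_{\cal A} \colon \cal A \to \cal B(\cal K)$ and $\pi_{\cal B} \colon \cal B(\cal H) \to \cal B(\cal K)$. The injectivity of $\cal B(\cal H)$ (via Arveson's extension theorem) provides a ucp retraction onto a copy of $\cal B(\cal H)$ inside any ambient algebra containing it, while the LLP of $\cal A$ allows local approximation of $\pi_{\cal A}$ by ucp maps factoring through matrix algebras $M_n(\bb C)$---which are nuclear, so that the min and max tensor norms trivially coincide in their presence. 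Assembling these two ingredients produces an approximate factorization of the composition defining $\pi$ through $\cal A \otimes_{\min} \cal B(\cal H)$, yielding $\|\pi(x)\| \leq \|x\|_{\min}$ in the limit and hence $\|x\|_{\max} = \|x\|_{\min}$.

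The main obstacle is the second part: arranging the LLP approximations of $\pi_{\cal A}$ to respect commutation with $\pi_{\cal B}(\cal B(\cal H))$. The LLP produces ucp lifts, not $*$-homomorphic lifts, and ucp maps do not automatically preserve commutation relations between their ranges, so considerable care is required to combine an approximate lift of $\pi_{\cal A}$ with $\pi_{\cal B}$ into a well-defined bounded map on $\cal A \odot \cal B(\cal H)$ equipped with the minimal norm. The key technical lemma (in the form Kirchberg carries it out) works simultaneously with finite-dimensional pieces of $\cal A$ and exploits the injectivity of $\cal B(\cal H)$ at exactly the right places to extend compressions back without destroying the commutation structure---this is where the theorem earns its real depth.
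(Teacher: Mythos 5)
The paper states Kirchberg's theorem without proof (citing \cite{K}), so there is no internal proof to compare against; I can only assess your proposal on its own terms.

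Your overall framework --- factor through the LLP for $C^*(\bb F_\infty)$ and then combine LLP with the injectivity of $\cal B(\cal H)$ --- is the right organizing scheme, and your first part is essentially sound: lifting contractions out of $\cal C/\cal J$, forming $2\times 2$ unitary dilations, and invoking the universal property of $C^*(\bb F_\infty)$ is indeed how one establishes the LLP.

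The second part, however, contains a genuine conceptual error. You claim that ``the LLP of $\cal A$ allows local approximation of $\pi_{\cal A}$ by ucp maps factoring through matrix algebras $M_n(\bb C)$.'' This is false. Approximate factorization of the identity (or of an arbitrary representation) through matrix algebras by ucp maps is the content of the completely positive approximation property, which characterizes \emph{nuclear} \cstar-algebras. But $C^*(\bb F_\infty)$ is not nuclear (it is not even exact), so no such approximations exist. The LLP gives something quite different: ucp maps into a \emph{quotient} $\cal C/\cal J$ admit local (finite-dimensional-domain) ucp lifts up to $\cal C$. This produces lifts of restrictions through quotient maps, not factorizations through finite-dimensional \emph{codomains}. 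Your concluding paragraph correctly senses that there is tension around preserving commutation, but the real gap is upstream of that: the approximating objects you propose to use do not exist. The actual argument for ``LLP plus WEP implies nuclear pair'' is more delicate --- one typically analyzes a representation $\pi_{\cal B}$ of $\cal B(\cal H)$ by splitting its normal part (whose commutant is type I, so the spatial picture computes the min norm directly) from its singular part (which factors through a quotient such as the Calkin algebra, and this is exactly where the local \emph{lifting} furnished by LLP enters). Moreover, be aware that in several standard treatments the implication ``LLP $\Rightarrow$ nuclear pair with $\cal B(\cal H)$'' is derived \emph{as a corollary of} Kirchberg's theorem via the tensor characterization of LLP, so attempting to run the implication in the direction you propose requires an independent argument that you have not supplied and that cannot rest on matrix approximations.
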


Note that neither of these algebras are nuclear.  The importance of Kirchberg's theorem stems from the fact that $C^*(\bb F_\infty)$ is surjectively universal while $\cal B(\cal H)$ is injectively universal.

We also note that if $H$ is a subgroup of $G$ and $\A$ is any \cstar-algebra for which $(C^*(G),\A)$ is a nuclear pair, then so is $(C^*(H),\A)$.  In particular, whether or not $(C^*(\bb F_k),C^*(\bb F_k))$ is a nuclear pair is independent of the choice of $k\in \{2,3,\ldots\}\cup\{\infty\}$, a fact that will come up in our discussion of Kirchberg's QWEP problem.  We will also need the fact that if $p:G\to H$ is a surjective group morphism for which the canonical surjection $C^*(p):C^*(G)\to C^*(H)$ has a ucp lift (meaning a ucp map $\Phi:C^*(H)\to C^*(G)$ for which $C^*(p)\Phi$ is the identity on $C^*(H)$), then $(C^*(G),C^*(G))$ being a nuclear pair implies $(C^*(H),C^*(H))$ is a nuclear pair.  

\subsection{Kirchberg's QWEP problem}\label{sec3:kirchberg's}

It follows from the definition of the minimal tensor product that for any inclusion $\A\subseteq \cal B$ of \cstar-algebras, one has that $\A\otimes_{\min} \cal C\subseteq \cal B\otimes_{\min} \cal C$ (isometrically) for any other \cstar-algebra $\cal C$.  On the other hand, with the same setup, while there will always be a $*$-homorphism $\A\otimes_{\max}\cal C\to \cal B\otimes_{\max}\cal C$, this homomorphism need not be injective, that is, isometric.  One case, however, where this does hold is the inclusion $\A\subseteq \A^{**}$.  That is, it follows from the definitions, that $\A\otimes_{\max} \cal C\subseteq \A^{**}\otimes_{\max} \cal C$ isometrically for all \cstar-algebras $\A$ and $\cal C$.  

Suppose again that $\cal A\subseteq \cal B$ and further suppose, for the sake of argument, that there is a ucp map $\Phi:\cal B\to \A^{**}$ with $\Phi(a)=a$ for all $a\in \A$.  By a fact pointed out in the the previous subsection, we obtain a ucp (and thus contractive) map $\Phi\otimes I_{\cal C}:\cal B\otimes_{\max} \cal C\to \A^{**}\otimes_{\max} \cal C$.  By the observation made in the previous paragraph, it follows that the canonical map $\A\otimes_{\max} \cal C\to \cal B\otimes_{\max} \cal C$ is an isometric inclusion.

If $\A$ is a \cstar-subalgebra of $\cal B$ and $\Phi:\cal B\to \A$ is a linear map for which $\Phi(a)=a$ for all $a\in \A$, then a theorem of Tomiyama says that the following are equivalent:
\begin{itemize}
    \item $\Phi$ is cp
    \item $\Phi$ is contractive
    \item $\Phi$ is a \emph{conditional expectation}, that is, $\Phi(axb)=a\Phi(x)b$ for all $a,b\in A$ and $x\in B$. 
\end{itemize}  When such a map exists, we say that $\A$ is \emph{cp-complemented} in $\cal B$.  The nomenclature comes from Banach space theory, for a Banach space $X$ is complemented in a superspace $Y$ if and only if there is a contractive linear map $\Phi:Y\to X$ that is the identity on $X$.  In the previous paragraph, we merely had to posit the existence of a ucp map $\Phi:\cal B\to \A^{**}$, whence we call $\Phi$ a \emph{weak conditional expecation} and say that $\A$ is \emph{weakly cp-complemented in $\cal B$}.  Consequently, we  proved that if $\cal A$ is weakly complemented in $\cal B$, then $\A\otimes_{\max} \cal C\subseteq \cal B\otimes_{\max} \cal C$ isometrically for any other \cstar-algebra $\cal C$.   With more work, one can actually show that the converse of this observation holds as well.  In fact, by the surjective universality of $C^*(\bb F_\infty)$, we see that $\A$ is weakly cp-complemented in $\cal B$ if and only if $\A\otimes_{\max}C^*(\bb F_\infty)\subseteq \cal B\otimes_{\max}C^*(\bb F_\infty)$ isometrically.

There are two notable examples of cp-complemented inclusions worth pointing out now:
\begin{itemize}
    \item If $\cal M$ is a finite von Neumann algebra, then any von Neumann subalgebra $\cal N$ of $\cal M$ is cp-complemented.  To see this, fix a trace $\tau$ on $\cal M$ and note that $L^2(N,\tau)$ is a closed subspace of $L^2(M,\tau)$.  One shows that the orthogonal projection $L^2(\cal M,\tau)\to L^2(\cal N,\tau)$ actually restricts to a conditional expectation $M\to N$.
    \item If $H$ is a subgroup of $G$, then $C^*(H)$ is cp-complemented in $C^*(G)$.  To see this, one shows that the map $\Phi:G\to \bb C[H]$ defined by setting $\Phi(g)=g$ for all $g\in H$ while $\Phi(g)=0$ for all $g\in G\setminus H$ extends to a conditional expectation $C^*(G)\to C^*(H)$.
\end{itemize}

Returning to the general situation, if $\A$ is weakly cp-complemented in every superalgebra (or equiv. in $\cal B(\cal H)$), then we say that $\A$ has the \emph{weak expectation property (or WEP)}.  An insight of Kirchberg \cite{K} was to use his theorem proving that $(C^*(\bb F_\infty),\cal B(\cal H))$ is a nuclear pair to provide an alternate ``test'' for having WEP:

\begin{thm}
For a \cstar-algebra $\A\subseteq \cal B(\cal H)$, the following are equivalent:
\begin{enumerate}
    \item $\A$ has the WEP.
    \item For every \cstar-algebra $C$, $\A\otimes_{\max} \cal C\subseteq \cal B(\cal H)\otimes_{\max}\cal C$ isometrically.
    \item $(\A,C^*(\bb F_\infty))$ is a nuclear pair.
\end{enumerate}
\end{thm}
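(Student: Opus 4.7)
The plan is to observe that all three conditions are really reformulations of one another via Kirchberg's nuclearity theorem $(C^*(\bb F_\infty), \cal B(\cal H))$ and the characterization of weak cp-complementation developed just before the statement of the theorem. In particular, nothing substantively new needs to be proved; the work is in chasing norms through the appropriate tensor products. I would organize the proof as the cycle $(1)\Rightarrow(2)\Rightarrow(3)\Rightarrow(1)$, since this avoids redoing arguments.

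For $(1)\Rightarrow(2)$, I would simply invoke the general principle proved in the paragraph preceding the theorem: if $\A$ is weakly cp-complemented in some superalgebra $\cal B$, then $\A\otimes_{\max}\cal C\hookrightarrow\cal B\otimes_{\max}\cal C$ isometrically for every \cstar-algebra $\cal C$. The WEP assumption supplies a weak conditional expectation $\Phi:\cal B(\cal H)\to\A^{**}$, and tensoring with $\id_{\cal C}$ gives a ucp map $\Phi\otimes\id_{\cal C}:\cal B(\cal H)\otimes_{\max}\cal C\to \A^{**}\otimes_{\max}\cal C$; combined with the automatic isometric inclusion $\A\otimes_{\max}\cal C\subseteq\A^{**}\otimes_{\max}\cal C$, this yields (2).

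For $(2)\Rightarrow(3)$, I specialize (2) to $\cal C=C^*(\bb F_\infty)$ and bring in Kirchberg's theorem, which asserts that the min and max norms agree on $\cal B(\cal H)\odot C^*(\bb F_\infty)$. Then for any $x\in\A\odot C^*(\bb F_\infty)$, chasing through
\[
\|x\|_{\min,\A}\leq \|x\|_{\max,\A}= \|x\|_{\max,\cal B(\cal H)}=\|x\|_{\min,\cal B(\cal H)}=\|x\|_{\min,\A},
\]
(where the second equality uses (2), the third is Kirchberg, and the first and last use respectively general inequality of norms and the isometric inclusion in the minimal tensor product) forces equality throughout, giving (3).

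For $(3)\Rightarrow(1)$, I use the characterization of WEP recorded immediately before the theorem, namely that $\A$ has WEP iff $\A\otimes_{\max}C^*(\bb F_\infty)\hookrightarrow\cal B(\cal H)\otimes_{\max}C^*(\bb F_\infty)$ isometrically. By (3) the left side is $\A\otimes_{\min}C^*(\bb F_\infty)$, and by Kirchberg's theorem the right side is $\cal B(\cal H)\otimes_{\min}C^*(\bb F_\infty)$; the desired isometric inclusion then follows from the fact that the minimal tensor product always respects inclusions. The main obstacle — really the only nontrivial input — is Kirchberg's theorem itself, which is assumed; given it, the proof is essentially a bookkeeping exercise tracking norms on $\A\odot C^*(\bb F_\infty)$ and $\cal B(\cal H)\odot C^*(\bb F_\infty)$.
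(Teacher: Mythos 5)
Your proof is correct and follows essentially the same route as the paper: both rely on the preceding discussion of weak cp-complementation for the (1)$\Leftrightarrow$(2) link, then derive (3) by sandwiching $\A\odot C^*(\bb F_\infty)$ between its max and min completions using the isometric embedding into $\cal B(\cal H)\otimes_{\max}C^*(\bb F_\infty)$, Kirchberg's theorem, and the min tensor product's preservation of inclusions, and finally go back from (3) to (1) via the criterion that WEP is equivalent to $\A\otimes_{\max}C^*(\bb F_\infty)\hookrightarrow\cal B(\cal H)\otimes_{\max}C^*(\bb F_\infty)$ being isometric. The only cosmetic difference is that you arrange the implications as the cycle $(1)\Rightarrow(2)\Rightarrow(3)\Rightarrow(1)$, whereas the paper treats $(1)\Leftrightarrow(2)$ as previously observed and then proves $(1)\Leftrightarrow(3)$ directly; the underlying norm-chasing argument is the same.
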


\begin{proof}
We already observed the equivalence of (1) and (2).  Now suppose that $\cal A$ has WEP.  We then have $$\A\otimes_{\max}C^*(\bb F_\infty)\subseteq \cal B(\cal H)\otimes_{\max}C^*(\bb F_\infty)=\cal B(\cal H)\otimes_{\min}C^*(\bb F_\infty)\supseteq \A\otimes_{\min}C^*(\bb F_\infty),$$ that is, $(\A,C^*(\bb F_\infty))$ is a nuclear pair.  Conversely, suppose that $(\A,C^*(\bb F_\infty))$ is a nuclear pair.  It suffices to show that $\A\otimes_{\max}C^*(\bb F_\infty)\subseteq \cal B(\cal H)\otimes_{\max} C^*(\bb F_\infty)$.  However, this follows immediately from the assumption, Kirchberg's theorem, and the fact that $\otimes_{\min}$ preserves inclusions.
\end{proof}
 
 What are the ramifications of assuming that $C^*(\bb F_\infty)$ itself has the WEP, that is, $(C^*(\bb F_\infty),C^*(\bb F_\infty))$ is a nuclear pair?  First, as observed above, this is equivalent to $(C^*(\bb F_k),C^*(\bb F_k))$ being a nuclear pair for any fixed $k\geq 2$.  Next, if we define the \emph{QWEP} to be the property that a \cstar-algebra is a quotient of a \cstar-algebra with WEP, then $C^*(\bb F_\infty)$ having the WEP implies that all separable \cstar-algebras have the QWEP.  We note that it is common to see both phrases ``$\cal A$ has the QWEP'' and ``$\cal A$ is QWEP'' (although the latter is of course grammatically incorrect). 
 
 Conversely, suppose that all separable \cstar-algebras have the QWEP.  Then certainly $C^*(\bb F_\infty)$ has the QWEP.  However, $C^*(\bb F_\infty)$ has another property, the so-called \emph{lifting property (or LP)} which, when combined with QWEP, actually implies the WEP.  A \cstar-algebra $\A$ has the lifting property if, for any ucp map $\Phi:\A\to \cal B/\cal J$, where $\cal B$ is a \cstar-algebra and $\cal J$ is a closed, two-sided ideal in $\cal B$, there is a ucp map $\Psi:\A\to \cal B$ for which $\pi\circ \Psi=\Phi$, where $\pi:\cal B\to \cal B/\cal J$ is the canonical quotient map.  Said more casually, $\A$ has the LP if every ucp map into a quotient \cstar-algebra has a ucp lift.  Now suppose that $\A$ has the LP and the QWEP as witnessed by the quotient map $q:\cal B\to \cal A$ with $\cal B$ having the WEP.  Let $\Psi:\cal A\to \cal B$ be a ucp lift of the identity map $\cal A\to \cal A$, that is, $q\circ \Psi=I_{\A}$.  Then by applying the ucp lift $\Psi\otimes I_{C^*(\bb F_\infty)}$ of $q\otimes I_{C^*(\bb F_\infty)}$, we see that $\cal A$ also has the WEP.
 
 We have thus arrived at the following:
 
 \begin{thm}
 The following assertions are equivalent:
 \begin{enumerate}
     \item $C^*(\bb F_\infty)$ has the WEP.
     \item For some (equiv. any) $k\in \{2,3,\ldots\}\cup\{\infty\}$, $(C^*(\bb F_k),C^*(\bb F_k))$ is a nuclear pair.
     \item Every separable \cstar-algebra has the QWEP.
 \end{enumerate}
 \end{thm}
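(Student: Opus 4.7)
The plan is to assemble the preceding discussion into a circle of implications anchored by the previous theorem, which identifies (1) with $(C^*(\bb F_\infty), C^*(\bb F_\infty))$ being a nuclear pair. For (1) $\Leftrightarrow$ (2), the case $k = \infty$ is immediate from the previous theorem applied to $\A = C^*(\bb F_\infty)$. To obtain the ``some (equivalently any)'' phrasing, I would observe that for every $k \in \{2,3,\ldots\} \cup \{\infty\}$ the groups $\bb F_k$ and $\bb F_\infty$ each embed as subgroups of the other ($\bb F_k \leq \bb F_\infty$ is obvious, while $\bb F_\infty$ sits inside $\bb F_2$---for instance in its commutator subgroup---hence inside $\bb F_k$ for all $k \geq 2$). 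Applying the subgroup restriction principle for nuclear pairs recorded at the end of the previous subsection twice, once on each tensor coordinate, with the symmetry of nuclear pairs in between, then transfers nuclear pair status between $(C^*(\bb F_k), C^*(\bb F_k))$ and $(C^*(\bb F_\infty), C^*(\bb F_\infty))$ in both directions.

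For (1) $\Rightarrow$ (3) I would invoke the surjective universality of $C^*(\bb F_\infty)$: every separable \cstar-algebra is a quotient of $C^*(\bb F_\infty)$, so if $C^*(\bb F_\infty)$ has the WEP then every separable \cstar-algebra is a quotient of a WEP algebra, i.e.\ is QWEP by definition.

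For (3) $\Rightarrow$ (1), I would apply the hypothesis to the separable algebra $C^*(\bb F_\infty)$ itself to obtain a quotient $q \colon \cal B \to C^*(\bb F_\infty)$ with $\cal B$ having the WEP, and then invoke the (nontrivial, externally cited) lifting property of $C^*(\bb F_\infty)$ to pick a ucp map $\Psi \colon C^*(\bb F_\infty) \to \cal B$ with $q \circ \Psi = I$. Tensoring everything with the identity on $C^*(\bb F_\infty)$, the chain $C^*(\bb F_\infty) \otimes_{\max} C^*(\bb F_\infty) \to \cal B \otimes_{\max} C^*(\bb F_\infty) = \cal B \otimes_{\min} C^*(\bb F_\infty) \to C^*(\bb F_\infty) \otimes_{\min} C^*(\bb F_\infty)$, where the middle equality is the WEP of $\cal B$ combined with the previous theorem, is a composition of contractive maps whose net effect is the identity on $C^*(\bb F_\infty) \odot C^*(\bb F_\infty)$; this forces the max and min norms to coincide, which by the previous theorem is (1). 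The main obstacle is this last appeal to the lifting property of $C^*(\bb F_\infty)$: it is a genuine theorem (essentially due to Kirchberg) not developed in the excerpt and must be imported as a black box. Everything else is bookkeeping on top of the previous theorem, the monotonicity of the two tensor norms already recorded, and the fact that ucp maps are completely contractive.
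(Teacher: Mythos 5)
Your overall architecture mirrors the paper's exactly: (1)$\Leftrightarrow$(2) via mutual embeddings of $\bb F_k$ and $\bb F_\infty$ together with the subgroup restriction principle for nuclear pairs, (1)$\Rightarrow$(3) via the surjective universality of $C^*(\bb F_\infty)$, and (3)$\Rightarrow$(1) via the lifting property of $C^*(\bb F_\infty)$. The first two directions are fine as written.

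For (3)$\Rightarrow$(1), however, your chain runs in the wrong direction. You write
\[
C^*(\bb F_\infty) \otimes_{\max} C^*(\bb F_\infty) \xrightarrow{\Psi\otimes I} \cal B \otimes_{\max} C^*(\bb F_\infty) = \cal B \otimes_{\min} C^*(\bb F_\infty) \xrightarrow{q\otimes I} C^*(\bb F_\infty) \otimes_{\min} C^*(\bb F_\infty),
\]
whose composition is indeed contractive and restricts to the identity on $C^*(\bb F_\infty)\odot C^*(\bb F_\infty)$. But a contractive map from the $\max$ completion to the $\min$ completion fixing the algebraic tensor product is nothing but the canonical surjection, which exists unconditionally; it gives only $\|\cdot\|_{\min}\leq\|\cdot\|_{\max}$, which is trivially true, and does \emph{not} force the norms to coincide. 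The useful chain goes the other way: start from $C^*(\bb F_\infty)\otimes_{\min} C^*(\bb F_\infty)$ and apply $\Psi\otimes I$, which is ucp and hence contractive for $\otimes_{\min}$ (using the tensoring of ucp maps on minimal tensor products recorded earlier); identify $\cal B\otimes_{\min} C^*(\bb F_\infty)$ with $\cal B\otimes_{\max} C^*(\bb F_\infty)$ via the WEP of $\cal B$ and the previous theorem; then apply $q\otimes I$, a $*$-homomorphism and hence contractive for $\otimes_{\max}$, landing in $C^*(\bb F_\infty)\otimes_{\max} C^*(\bb F_\infty)$. The resulting composition is contractive from the $\min$ completion to the $\max$ completion and restricts to the identity on the algebraic tensor, yielding the nontrivial inequality $\|\cdot\|_{\max}\leq\|\cdot\|_{\min}$ and hence equality. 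With that direction corrected, your argument is exactly the paper's.
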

 
 Any of the above equivalent statements is known as \emph{Kirchberg's QWEP problem}.  (We might be tempted to follow the CEP's lead and call this the QWEPP, but that looks a bit silly.)  As mentioned above, QWEP combined with LP implies WEP.  It turns out that it suffices to consider a ``local'' version of LP, aptly called the \emph{local lifting property (or LLP)} and the same argument works, that is QWEP together with LLP implies WEP.  Thus, another equivalent formulation of the QWEP problem is the statement that LLP implies WEP.
 
 We mention one other equivalent formulation of the QWEP problem that is not relevant for our particular story but is fascinating nonetheless:  the QWEP problem is equivalent to the statement that $C^*(\bb F_\infty\times \bb F_\infty)$ has a faithful tracial state.  What makes this interesting is that this is true for the reduced group \cstar-algebra $C^*_r(\bb F_\infty\times \bb F_\infty)$ (simply because it is true for any reduced group \cstar-algebra) \emph{and} it is true for $C^*(\bb F_\infty)$ (a result due to Choi).
 
% As stated above, if the QWEP problem has a positive answer, then $(C^*(\bb F_2),C^*(\bb F_2))$ is a nuclear pair.  Although we will not need it for our story, it turns out that the converse is also true, so this adds yet another equivalent reformulation of the QWEP problem.  Indeed, Kirchberg combined his above theorem with some further arguments to show that $C^*(\bb F_2)$ can be used to test for the WEP, that is, a \cstar-algebra $\A$ has the WEP if and only if $(\A,C^*(\bb F))$ is a nuclear pair.

The classes of WEP and QWEP algebras enjoy a number of closure properties relevant to the proofs that follow.  Rather than enumerate them all now, we will simply quote them when we need them later in the paper.

\subsection{From CEP to QWEP}\label{sec3:from}

In this section, we show how a positive solution to the CEP implies a positive solution to the QWEP problem.  While these statements are indeed equivalent, we focus on the direction that we need in order to give a negative solution to CEP.

So how does CEP get involved in a story about \cstar-algebras?  The first clue is that, for von Neumann algebras, the WEP had already been well-studied and is referred to as \emph{injectivity}.  A not so trivial result is that any hyperfinite von Neumann algebra is injective, whence $\R$ is injective.  The extremely deep work of Connes in \cite{Connes}, where the CEP originally comes from, proved the converse, namely any separable injective II$_1$ factor must be hyperfinite, and thus isomorphic to $\R$.  Thankfully we do not need this result in our story, although the proof that $\R$ is injective (and thus has WEP) is difficult enough.  

Now that we know that $\R$ is injective, so is $\ell^\infty (\R)$ as WEP is closed under the formation of direct sums.  Since $\R^\u$ is a \cstar-algebra quotient of $\ell^\infty(\R)$, we see that $\R^\u$ is QWEP!  Okay, it smells like we are getting closer.

Now suppose that $\cal M$ is a tracial von Neumann algebra that embeds in $\R^\u$ in a trace-preserving manner.  Without loss of generality, let us assume that $\cal M$ is simply a subalgebra of $\R^\u$.  By a fact pointed out above, this means that $\cal M$ is cp-complemented in $\R^\u$.  Since QWEP is preserved by (weakly) cp-complemented inclusions, we conclude that $\cal M$ is also QWEP.  

We have thus arrived at the statement:  a positive solution to CEP implies all finite von Neumann algebras are QWEP!

But we are still talking about von Neumann algebras.  How do we bridge the gap into talking about \cstar-algebras?  Well, recall that every \cstar-algebra $\A$ has a canonically associated von Neumann algebra $\A^{**}$.  Since $\A$ is tautologically weakly cp-complemented in $A^{**}$, in order to show that $\A$ has QWEP, it suffices to show that $\cal A^{**}$ has QWEP (again using the closure of QWEP under weakly cp-complemented subalgebras).  

While $\cal A^{**}$ is a separable von Neumann algebra, it may not be finite.  How do we get CEP to help us with non-finite von Neumann algebras?

Given any von Neumann algebra $\cal M$, there is an important one-parameter group $(\sigma_t^\varphi)$ of automorphisms of $\cal M$, known as the \emph{modular group}.  When $\cal M$ is finite, the modular automorphism group is trivial and thus plays no role.  But in the general theory, it is an indispensible tool.  (For all of the fancy type III material discussed in this paragraph, Takesaki's book \cite{takesaki} is the canonical reference.)  Akin to the semidirect product construction in group theory, there is a \emph{crossed product construction} that associates to any group acting on a von Neumann algebra a larger von Neumann algebra where this action is implemented by unitaries.  Thus, we are entitled to consider the crossed product algebra $\cal M\rtimes_{\sigma_t^\varphi}\bb R$ corresponding to the action of $\bb R$ on $\cal M$ via the modular automorphism group.  A serious theorem of Takesaki states that $\cal M\rtimes_{\sigma_t^\varphi} \bb R$ is semifinite.  We came across semifinite factors above.  For a general von Neumann algebra, we can take semifinite to mean that the algebra contains an increasing union of finite subalgebras whose union generates the von Neumann algebra.  Since QWEP is preserved under unions and a von Neumann algebra is QWEP if it contains a WOT-dense $*$-subalgebra with QWEP, we see that $\cal M\rtimes_{\sigma^t_\varphi}\bb R$ has QWEP.  An alternative approach is to use the fact that a von Neumann algebra is QWEP if and only if all of the factors involved in its direct integral decomposition are QWEP.  Thus, to show that a semifinite von Neumann algebra is QWEP, it suffices to consider the case of factors.  But then a semifinite factor is of the form $\cal M\bar\otimes \cal B(\cal H)$ for a finite factor $\cal M$, and one can use the fact that the von Neumann algebra tensor product of QWEP von Neumann algebras is again QWEP.  Either way, we now know that $\cal M\rtimes_{\sigma_t^\varphi} \bb R$ is QWEP.

Finally, it is a general fact that any von Neumanna algebra $\cal M$ is always cp-complemented in any crossed product $\cal M\rtimes_\alpha G$; since $\cal M\rtimes_{\sigma_t^\varphi} \bb R$ is QWEP for any von Neumann algebra $\cal M$, it follows that $\cal M$ itself is also QWEP.  Applying this fact to $\cal M=\A^{**}$, we see that $\A^{**}$, and thus $\A$, are also QWEP for any \cstar-algebra $\A$.  This finishes the proof that a positive solution to CEP implies a positive solution to the QWEP problem.

As mentioned before, a positive solution to the QWEP problem implies a positive solution to the CEP.  The proof involves the theory of \emph{amenable traces}, which we will not go into now, but which will be important in our alternate derivation of the failure of CEP from $\mip^*=\operatorname{RE}$ given in Subsection \ref{sec7:synchronous} below.

\section{A crash course in complexity theory}

In this section, we introduce the basic notions from (classical) complexity theory needed to understand the statement of the result $\mip^*=\operatorname{RE}$.  Essentially all of this material (apart from the business about nonlocal games) was taken from the book \cite{complex}.
\subsection{Turing machines}\label{sec4:Turing}

A \emph{Turing machine} is one of the more popular mathematical formulations of an idealized computing device.  Formally, a Turing machine is a pair $\mathbf{M}=(Q,\delta)$, where $Q$ is a finite set of \emph{states} of the machine and $\delta:Q\times \{0,1,\Box,\triangle\}^3\to Q\times \{0,1,\Box,\triangle\}^2\times \{L,S,R\}^3$ is the \emph{transition function}; here $\Box$ and $\triangle$ are two special symbols whose significance will be seen shortly.  We always assume that $Q$ contains two special states, namely the \emph{start state} $q_{start}$ and the \emph{halting state} $q_{halt}$.

Throughout, for any $n\in \bb N$, $\{0,1\}^n$ denotes the set of binary strings of length $n$ while $\{0,1\}^*:=\bigcup_{n\in \bb N}\{0,1\}^n$ denotes the set of all finite binary strings.  Given $z\in \{0,1\}^*$, $|z|$ denotes the length of the string $z$.

Here is how one should envision the computation performed by the Turing machine $\bf M$ upon some input $z\in \{0,1\}^*$.  The machine contains three \emph{tapes}, which are one-way infinite strips containing boxes on which, at any given moment in the computation, contain exactly one symbol from $\{0,1,\Box,\triangle\}$.  The first tape is the \emph{input tape}, the second tape is the \emph{work tape}, and the last tape is the \emph{output tape}.  At the beginning of the computation, the input tape has the \emph{start symbol} $\triangle$ in the first box, then the input string $z$ in the next $|z|$ boxes, and then the remainder of the boxes contain the \emph{blank symbol} $\Box$.  Both the work tape and the output tape contain the start symbol $\triangle$ in the first box and then blank symbols $\Box$ in the remaining boxes.  One envisions each tape having a ``tape head'' which is placed over exactly one box in the tape at any given moment during the computation; the tape head for the input tape can read the symbol in that box while the tape head for the other two tapes can both read the symbol in that box and potentially change it to a new symbol.

The Turing machine begins the computation in the start state $q_{start}$ with the tape head above the leftmost box (which contains the start symbol $\triangle$) for each tape.  In general, at any given moment during the computation, the Turing machine is in some state $q\in Q$ with tape heads reading boxes $k_1,k_2,k_3\in \bb N$ (representing how far they are from the beginning of their respective tape) and with symbols $s_1,s_2,s_3\in \{0,1,\Box,\triangle\}$ inside of each of the boxes being read.  The Turing machine then computes $\delta(q,s_1,s_2,s_3)$, obtaining the tuple $(q',s_2',s_3',I_1,I_2,I_3)$, which should be interpreted as follows:
\begin{itemize}
    \item The box in the work tape (resp. output tape) that the tape head is reading should have its contents replaced by $s_2'$ (resp. $s_3'$).
    \item The tape head for the input tape should move to the left if $I_1=L$, to the right if $I_1=R$, and should stay in the same place if $I_1=S$.  Similar actions should be taken corresponding to $I_2$ for the work tape and $I_3$ for the output tape.  If any tape head is at the leftmost box and the instruction is $L$, then the tape head should also stay in the same place.
    \item After executing these acts, the Turing machine should now enter state $q'$.
\end{itemize}

The machine continues ``running'' in this fashion.  If the machine ever enters the state $q_{halt}$, then the machine ``stops running'', that is, no further modification of the three tapes will take place.  In this case, the \emph{output} of the computation upon input $z$ is the longest initial string on the output tape not containing any blank symbols.  (If all the symbols are blank, then the output is considered the empty string).

Every Turing machine $\bf M$ \emph{computes} a partial function $f^{\bf M}:\{0,1\}^*\rightharpoonup \{0,1\}^*$ whose domain consists of those strings $z\in \{0,1\}^*$ for which $\bf M$ halts upon input $z$; in this case, we define $f^{\bf M}(z)$ to be the corresponding output.  We sometimes abuse notation and identify $f^{\bf M}$ with $\bf M$ itself, that is, we may write $\mathbf{M}(z)$ instead of $f^{\bf M}(z)$.  We say a partial function $f:\{0,1\}^*\rightharpoonup \{0,1\}^*$ is \emph{computable} if $f=f^{\bf M}$ for some Turing machine $\bf M$.

Given a function $T:\bb N\to \bb N$, we say that the Turing machine $\bf M$ \emph{runs in $T(n)$-time} if, for any input $z\in \{0,1\}^*$, upon input $z$, $\bf M$ halts in at most $T(|z|)$ steps.  Note that if $\bf M$ runs in $T(n)$-time for some function $T$, then $f^{\bf M}$ is a total function.  We say that $\bf M$ is a \emph{polynomial time} (resp. \emph{exponential time}) Turing machine if $\bf M$ runs in $Cn^c$- (resp. $C2^{n^c}$-) time for some constants $C>0$ and $c\geq 1$.

A \emph{language} is simply a subset $\mathbf{L} \subseteq \{0,1\}^*$.  We identify a language $\bf L$ with its characteristic function $\chi_{\bf L}:\{0,1\}^*\to \{0,1\}$.  Consequently, it makes sense to speak of $\bf L$ being computable by a Turing machine.  Usually a language is described in terms of some mathematical problem under consideration, e.g. the set of finite graphs that can be $3$-colored.  The implict assumption is that there is some natural (and effective) way of coding the set of such graphs as a set of finite binary strings.  In the sequel, for all languages introduced in this manner, we assume that the reader can figure out how such a coding might be performed.

Turing machines are one of several mathematically precise models for computation; other alternatives include register machines and the class of recursive functions.  However, all known models of computation lead to precisely the same class of computable functions.  This is evidence for the \emph{Church-Turing thesis}, which states that this common class of functions coincides with our heuristic notion of what a computable function should be.  (See \cite[Chapter 3]{enderton} for more on this.)  One can even formulate a stronger version of the thesis, which states that even when taking into account effective matters, that is, how ``fast'' one can compute a function, the choice of model is still irrelevant.  (It is plausible that quantum computers could pose a serious threat to the strong Church-Turing thesis.)  The import of the strong Church-Turing thesis for us in these notes is that, in the sequel, when claiming that a certain problem can be solved in a certain efficient manner, we never need to actually write down the Turing machine that witnesses this fact.  Instead, one can write down an argument using ``pseudo-code'' and the reader can (if they choose to) convert the pseudo-code into an actual Turing machine program.

\subsection{Some basic complexity classes}\label{sec4:some}

A \emph{complexity class} is simply a collection of languages.  The most interesting complexity classes are those defined by some sort of condition saying that the languages in the class represent efficiently computable (or verifiable, as we shall shortly see) problems.

The complexity class $\P$ is defined to be the class of languages $\bf L$ such that membership in $\bf L$ can be decided by a Turing machine in polynomial time, that is, $\chi_{\bf L}$ can be computed by a polynomial time Turing machine.  For example, the set of connected graphs is a language that belongs to $\P$ (as witnessed by, say, the breadth first search algorithm). 

The complexity class $\EXP$ is defined in the same manner as $\P$, replacing polynomial time by exponential time.  The \emph{time hierarchy theorem} implies that $\P\subsetneq \EXP$.

Sometimes it is too difficult to come up with an algorithm that efficiently decides membership in a particular language while it is the case that if someone were to ``hand you'' a proof that a certain string belonged to the language, then you could efficiently verify that the proof was indeed correct.  The complexity class $\NP$ captures this idea.  More precisely, the complexity class $\NP$ consists of those languages $\bf L$ for which there is a polynomial time Turing machine $\bf M$ and a polynomial $p(n)$ such that:
\begin{itemize}
    \item for all $z\in \bf L$, there is $w\in \{0,1\}^{p(|z|)}$ for which $\mathbf {M}(z,w)=1$.
    \item for all $x\notin \bf L$ and for all $w\in \{0,1\}^{p(|z|)}$, $\mathbf{ M}(z,w)=0$.
\end{itemize}

In the above definition, one thinks of $w$ as the ``proof'' that $z\in \bf L$; other commonly used terms for $w$ are ``witness'' and ``certificate.''  One often envisions this situation using two fictious players, a verifier and a prover.  If $z\in \bf L$, the prover hands the verifier a proof $w$ that $z$ indeed belongs to $\bf L$; the prover has unlimited computation power in this regards.  In order for the verifier to be able to efficiently check that the proof indeed works, the proof cannot be too long (or else the verifier will not even be able to read the entire proof), hence the polynomial length requirement.  Moreover, if $z\notin \bf L$, then there should be no proof that $z$ belongs to $\bf L$, whence the second condition.

It is clear that $\P\subseteq \NP$.  While intuitively it seems clear that this inclusion should be proper (there ``ought'' to be problems that are impossible to efficiently decide but yet there are always proofs that are efficiently verifiable), this fact has yet to be established and remains one of the more famous open problems in mathematics.

We also note that $\NP\subseteq \EXP$ as one can check all of the exponentially many possible certificates for a given string in exponential time.

An example of a language in $\NP$ is the set of codes for pairs $(G,k)$, where $G$ is a finite graph that contains an independent set of size $k$; a certificate for a given pair is simply an independent set of size $k$.  This language is unlikely to be in $\P$.  Indeed, this is an example of a so-called \emph{$\NP$-complete problem}, meaning that it is as difficult as any other problem in $\NP$ (in a precise sense), whence if it belongs to $\P$, then so do all languages in $\NP$ and $\P=\NP$.  Another example of a language in $\NP$ is the set of codes for pairs $(G_1,G_2)$ of finite graphs that are isomorphic; the certificate here is the isomorphism between the graphs.  This problem, however, is unlikely to be $\NP$-complete (see \cite[Section 8.4]{complex}).

An alternative way of defining the class $\NP$ is to use \emph{nondeterministic Turing machines}, which is actually the original definition and explains the terminology ($\NP$ stands for nondeterministic polynomial-time).  A nondeterministic Turing machine is defined exactly like a deterministic one except that it has two transition functions rather than one.  Consequently, rather than there being a single (determinstic) sequence of steps during a computation upon a given input, there is an entire binary tree of such computations, for at every step during a computation, one can apply either of the two transition functions.  One additional difference is that instead of a single halting state $q_{halt}$, we now have two halting states called $q_{accept}$ and $q_{reject}$.  We say that the nondeterministic Turing machine $\bf M$ outputs $1$ on input $z$ if there is \emph{some} sequence of steps which causes the machine to reach $q_{accept}$.  If every sequence of steps causes the machine to reach $q_{reject}$, then we say that $\bf M$ outputs $0$.  If every sequence of computations results in either $q_{accept}$ or $q_{reject}$ in time $T(|z|)$, then we say that $\bf M$ runs in time $T(n)$.  It thus makes sense to speak of polynomial (resp. exponential time) nondeterminstic Turing machines.

It can easily be verified that a language $\bf L$ belongs to $\NP$ if and only if there is a polynomial time nondeterministic Turing machine $\bf M$ such that $f^{\mathbf{M}}=\chi_{\bf L}$.  Moreover, using exponential time nondeterministic Turing machines, we can also define the complexity class $\NEXP$.   Of course, using nondeterministic Turing machines that run in doubly exponential time, one can also define $\NEEXP$ (this will come up later).  A nondeterministic version of the time hierarchy theorem guarantees $\NP\subsetneq \NEXP \subsetneq \NEEXP$.

At this point, we have $P\subseteq \NP\subseteq \EXP\subseteq \NEXP$ with $\P\subsetneq \EXP$ and $\NP\subsetneq \NEXP$.  One can also use a ``padding'' argument to show that if $\EXP=\NEXP$, then $\P=\NP$.

So far we have only been concerned with time efficiency.  One can instead consider ``space efficiency.''  We will only consider the class $\PSPACE$, which consists of all languages $\bf L$ for which there is a Turing machine such that, upon input $z$, decides whether or not $z\in \bf L$ using only a polynomial amount of work space.  It is clear that $\P\subseteq \PSPACE$.  It is also fairly easy to see that $\NP\subseteq \PSPACE$, for one can simply check all possible certificates, erasing one's work after each individual check, thus using only a polynomial amount of space.  A slightly less obvious inclusion is $\PSPACE\subseteq \EXP$; the proof uses the notion of a configuration graph for a computation.  So, to update our state of knowledge, we have $\P\subseteq \NP\subseteq \PSPACE\subseteq \EXP\subseteq \NEXP$.  It is not known if the inclusion $\NP\subseteq \PSPACE$ is proper.  In fact, it is not even known if the inclusion $\PSPACE\subseteq \NEXP$ is proper (this is relevant for our later discussion).  Of course, if $\PSPACE=\NEXP$, then $\EXP=\NEXP$, whence $\P=\NP$.

We end this section with the definition of the class $\operatorname{BPP}$.  Although it will not play a direct role in the story to follow, it will make a later pill easier to swallow.  We return to the setting of nondeterministic Turing machines, but this time we count the proportion of computations that output $\mathbf{M}(z)=1$.  We say that the language $\bf L$ belongs to the class $\operatorname{BPP}$ if there is a nondeterministic Turing machine $\bf M$ such that, upon $z\in \{0,1\}^*$, the probability that a random nondeterministic computation agrees with $\chi_{\mathbf{L}}(z)$ is at least $\frac{2}{3}$.  Just as in the case of $\NP$, there is a formulation using deterministic Turing machines:  $\bf L$ belongs to $\BPP$ if and only if there is a Turing machine $
\bf M$ and a polynomial $p(n)$ such that, for every string $z\in \{0,1\}^*$, the probability that a random $r\in \{0,1\}^{p(|z|)}$ is such that $\mathbf{M}(z,r)=\chi_{\mathbf{L}}(z)$ is at least $\frac{2}{3}$.  We remark that the choice of $\frac{2}{3}$ is fairly arbitrary; by repeating the computation several (but still a reasonable number of) times and taking the majority result of the computations, we can replace $\frac{2}{3}$ with a probability as close to $1$ as one desires.  The class $\BPP$ is contained in $\EXP$ as one can check all random bits and compute the probability that a random choice yields $1$ or $0$.

If $\bf L$ is a language in $\BPP$ and one repeats the computation a sufficient number of times to achieve a probability of, say, $0.99$, then one can be fairly certain that the result of the probabilistic computation is the truth, and thus $\BPP$ seems like a fairly good substitute for $\P$.  In fact, there are complexity-theoretic reasons for believing that $\BPP$ might coincide with $\P$ (see \cite[Chapter 16]{complex}).  

\subsection{Interactive proofs}\label{sec4:interactive}

We now imagine the situation where rather than the prover just handing the verifier a proof, the prover and the verifier are allowed to interact.  Given an input, the verifier can ask the prover a question, the prover can answer that question, then based on that answer, the verifier can ask the prover another question to which the prover can reply, and so on, for a certain number of rounds.  Each time, the verifier's question and the prover's answer depend on the entire sequence of questions and answers obtained up to that point (as well as the input).  After this discussion, the verifier can decide whether or not to accept.  Once again, the verifier uses a polynomial-time Turing machine to choose which questions to ask and whether or not to accept at the end of the conversation while the prover has no computational limitations.

It is not too difficult to verify that, with this description of interactive proof, the corresponding complexity class would simply be $\NP$ in disguise.  Indeed, one can just use the conversation, or ``transcript'' as it is usually called, as the certificate.  However, combining this idea with a randomized process as discussed at the end of the last subsection does lead to a class with more computational power (although, interestingly enough, a class we have already seen before).

In order to define this class, we fix $k\in \bb N$ (although one could actually work with a polynomial-time computable $k:\bb N\to \bb N$ instead) and a polynomial $p(n)$.  Assume that we also have a Turing machine $\bf V$ (now that we are really viewing the machine as a verifier, we have replaced $\bf M$ with $\bf V$) such that, for all $z\in \{0,1\}^*$, all $r\in \{0,1\}^{p(|z|)}$, and all strings $a_1,\ldots,a_{2k}\in \{0,1\}^*$, $\bf V$ halts upon input $(z,r,a_1,\ldots,a_{2i})$ in time polynomial in $|z|$ for all $i=0,\ldots,k$.  We then imagine a prover $P:\{0,1\}^*\to \{0,1\}^*$ interacting with $\bf V$ as follows.  First, the verifier randomly selects $r\in \{0,1\}^{p(|z|)}$ and computes $a_1:=\mathbf{V}(z,r)$; this is $\bf V$'s first ``question'' to $P$.  $P$ then responds with the ``answer'' $a_2:=\mathbf{P}(z,a_1)$.  (Note that $P$ does not have access to the random string $r$; one says that $\bf V$ is using ``private coins.''  It turns out that for what we are going to define below, one can also use ``public coins'' that the prover is aware of.)  This constitutes the first ``round'' of their interaction.  The verifier then asks $P$ their second question $a_3:=\mathbf{V}(z,r,a_1,a_2)$ and $P$ responds with $a_4:=P(z,a_1,a_2,a_3)$, completing the second round of interaction.  This is repeated for a total of $k$ rounds.  $\mathbf V$ then returns their decision $\mathbf V(z,r,a_1,\ldots,a_{2k})\in \{0,1\}$, indicating whether or not they accept the prover's answers as constituting evidence that $z$ indeed belongs to $\bf L$.

The complexity class $\IP$ is defined to be the collection of those languages $\bf L$ for which there is a Turing machine $\bf V$ as above such that:
\begin{itemize}
    \item If $z\in \bf L$, then there is a prover $P$ such that the probability that a random bit $r$ causes $\bf V$ to accept is at least $\frac{2}{3}$.
    \item If $z\notin \bf L$, then no prover can cause $\bf V$ to accept more than $\frac{1}{3}$ of the time.
\end{itemize}

The probabilities $\frac{2}{3}$ and $\frac{1}{3}$ above are called the \emph{completeness} and \emph{soundness} parameters respectively.  As in the case of $\operatorname{BPP}$, they are somewhat arbitrary; any completeness parameter strictly larger than $\frac{1}{2}$ will define the same class.  It turns out that one can even replace the completeness parameter by $1$ without changing the class; this property of $\IP$ is called \emph{perfect completeness}.

A nice example of a language in $\IP$ is the collection of pairs $(G_1,G_2)$ of finite graphs that are \emph{not} isomorphic.  Note that this class is not obviously in $\NP$ for there are too many possible functions that could serve as an isomorphism.  There is however a simple interactive proof for this class.  Indeed, the verifier randomly selects $i\in \{1,2\}$ and then randomly selects a permutation $\sigma$ on the number of vertices of $G_i$, obtaining a graph $H$ isomorphic to $G_i$.  The verifier then sends the graph $H$ to the prover as its ``question.''  The prover then responds with a bit $a\in \{1,2\}$, which represents its guess as to which of the two graphs $G_1$ or $G_2$ the verifier selected randomly.  The verifier accepts if and only if the prover guessed the chosen graph correctly.  Note that if $G_1\not\cong G_2$ (that is, if the pair $(G_1,G_2)$ belongs to the class), then the prover can always respond correctly, for the prover can just figure out whether or not $H$ is isomorphic to $G_1$ or to $G_2$.  (Do not forget that the prover is all-powerful!)  However, if $G_1\cong G_2$ (that is, $(G_1,G_2)$ does not belong to the class), then $H$ is isomorphic to both $G_1$ and $G_2$ and thus the prover (regardless of its unlimited power) can do no better than simply guessing which graph was chosen by the verifier, thus only convincing the verifier at most half of the time

Given a verifier $\bf V$ as in the definition of $\IP$, one can compute in $poly(|z|)$-space the optimal prover strategy.  This shows that $\IP\subseteq \PSPACE$.  A landmark theorem in the subject shows that in fact we have equality:

\begin{thm}[Fortnow, Karloff, Lund, Nisan \cite{FKLN}]
$\IP=\PSPACE$.
\end{thm}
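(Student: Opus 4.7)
The plan is to establish both containments separately. The inclusion $\IP \subseteq \PSPACE$ is the direction already indicated in the paragraph preceding the theorem: given a verifier $\bf V$ as in the definition of $\IP$, the maximum over prover strategies of the expected acceptance probability can be computed by a depth-first traversal of the $k$-round interaction tree, alternating averaging (over the verifier's random bits $r$ and over the prover's responses) with maximization (over the prover's responses). Since $k$, $p(|z|)$, and the response lengths are polynomial in $|z|$, the recursion fits in polynomial space; comparing the computed value to $2/3$ decides membership in $\bf L$.

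For the hard inclusion $\PSPACE \subseteq \IP$, I would exhibit an interactive proof for a $\PSPACE$-complete language, namely TQBF: the set of true fully quantified Boolean formulas $\psi = Q_1 x_1 \cdots Q_n x_n\, \phi(x_1,\ldots,x_n)$ with each $Q_i \in \{\exists, \forall\}$ and $\phi$ quantifier-free. The proof combines three ingredients. First, \emph{arithmetization}: fix a prime $p$ of $\operatorname{poly}(n)$ bits and convert $\phi$ to a multilinear polynomial $\tilde\phi \in \bb F_p[x_1,\ldots,x_n]$ by the substitutions $\neg x \mapsto 1-x$, $a \wedge b \mapsto ab$, $a \vee b \mapsto a+b-ab$; then replace $\exists x_i$ by $\sum_{x_i \in \{0,1\}}$ and $\forall x_i$ by $\prod_{x_i \in \{0,1\}}$. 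The formula $\psi$ is true iff the resulting expression, viewed in $\bb F_p$, is nonzero.

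Second, the \emph{sum-check protocol} of Lund-Fortnow-Karloff-Nisan: to verify a claim of the form $\sum_{x_1 \in \{0,1\}} R(x_1, x_2, \ldots, x_m) = K$, the prover sends a univariate polynomial $g(x_1)$ claimed to equal $\sum_{x_2,\ldots,x_m \in \{0,1\}} R(x_1,\ldots,x_m)$; the verifier checks $g(0) + g(1) = K$, picks a random $r \in \bb F_p$, and recurses on the smaller claim $\sum_{x_2,\ldots,x_m} R(r,x_2,\ldots,x_m) = g(r)$ with one fewer variable. An analogous protocol handles $\prod$. By the Schwartz-Zippel lemma, the soundness error contributed by each round is at most $\deg(g)/p$. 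After $n$ rounds the verifier is left with a single claim of the form $\tilde\phi(r_1,\ldots,r_n) = K'$, which it checks directly.

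The main obstacle is that the $\prod$ quantifiers double the degree of the intermediate polynomial each round, leading to exponential-degree univariate messages that cannot be transmitted in polynomial time. \emph{Shamir's insight} is to insert a \emph{linearization operator} $L_{x_j}$, sending a polynomial $P$ to the unique polynomial that is linear in $x_j$ and agrees with $P$ at $x_j \in \{0,1\}$, after each quantifier in the expression, applied to all previously quantified variables. Since $\tilde\phi$ is initially multilinear and $L_{x_j}$ preserves values on $\{0,1\}$-inputs, the arithmetic value is unchanged and so is the truth-equivalence with $\psi$. The protocol is extended to treat each $L_{x_j}$ as its own round: the prover sends the claimed linear polynomial in $x_j$, the verifier checks the interpolation identity at $x_j \in \{0,1\}$, picks a random $r$, and continues. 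Now every prover message has degree $O(1)$, the total number of rounds is $O(n^2)$, and choosing $p$ polynomially large makes the overall soundness error well below $1/3$. This yields an interactive proof for TQBF, completing $\PSPACE \subseteq \IP$.
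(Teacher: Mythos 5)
The paper does not prove this theorem: the paragraph preceding the statement establishes only the easy inclusion $\IP \subseteq \PSPACE$ (by computing the optimal prover strategy in polynomial space, exactly as you describe), and the hard inclusion $\PSPACE \subseteq \IP$ is cited to the literature. Your sketch supplies the standard argument via arithmetization of TQBF, the LFKN sum-check protocol, and Shamir's degree-reduction trick, and it is essentially correct. Two small imprecisions are worth flagging. First, the arithmetized polynomial $\tilde\phi$ is not multilinear in general: a Boolean variable occurring in several subformulas of $\phi$ produces higher powers under the substitution $a \wedge b \mapsto ab$. What the argument actually uses is only that $\tilde\phi$ agrees with $\phi$ on $\{0,1\}^n$, that $\tilde\phi$ has degree polynomial in $|\phi|$ in each variable (so the verifier can evaluate it at the final random point), and that each linearization operator $L_{x_j}$ preserves Boolean values; none of this requires multilinearity. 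Second, with the linearization operators inserted, the prover's messages have degree bounded by a polynomial in $|\phi|$, not $O(1)$; this is still enough for a polynomial-time verifier and for total soundness error $\operatorname{poly}(n)/p$, so the conclusion stands. (One can additionally preprocess $\phi$ into a form in which every variable occurs a bounded number of times to obtain constant-degree messages, but this is an optimization rather than a necessity.) Finally, the paper's citation is to the LFKN sum-check paper, which establishes $\operatorname{PH}\subseteq\IP$; the full equality with $\PSPACE$ is Shamir's theorem, and your account correctly isolates his linearization idea as the extra ingredient.
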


Recall that randomization alone likely does not achieve anything new (earlier we remarked that $\P=\operatorname{BPP}$ is likely) and, similarly, interaction alone does not achieve anything too new (as we just recover $\NP$).  However, by combining randomization with interaction bumps us up to $\PSPACE$ (which is likely bigger than $\NP$).

But why stop at one prover?  One can consider interactions as above but allowing for \emph{multiple} provers to interact with the verifier.  It should be emphasized that the provers are not allowed to interact with each other during the interaction, but only with the verifier.  They can, however, have a meeting before the interaction starts and decide upon a strategy that they will use while interacting with the verifier.  In other words, the provers are cooperating but noncommunicating.  If the provers use deterministic strategies as above, we arrive at the complexity class $\MIP$.  By allowing different kinds of strategies (in particular, those that employ quantum methods), we arrive at variations of $\MIP$, such as the famous $\MIP^*$ appearing in the equation $\MIP^*=\operatorname{RE}$.  

It turns out that the complexity class $\MIP$ is unchanged if one restricts to just two provers and one round of interaction; we thus make that default assumption from now on.  By ignoring one of the provers, we clearly have that $\IP\subseteq \MIP$.  As with $\IP$, one can also achieve perfect completeness.    

With two provers, one can now utilize ``police-style'' interrogation tactics.  This makes it possible for the verifier to read polynomially many random portions of an exponentially long proof and come to a conclusion that with high probability agrees with the truth.  A formalization of this idea yields another major theorem in the subject:

\begin{thm}[Babai, Fortnow, Lund \cite{BFL}]
$\MIP=\NEXP$.
\end{thm}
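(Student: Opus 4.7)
The plan is to prove the two inclusions $\MIP \subseteq \NEXP$ and $\NEXP \subseteq \MIP$ separately, with the first being essentially a brute-force enumeration argument and the second being the substantive content of the theorem.

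For the inclusion $\MIP \subseteq \NEXP$, I would argue as follows. Suppose $\bf L$ is decided by a two-prover, one-round MIP protocol with verifier $\bf V$ running in time $q(n)$ for some polynomial $q$ and using $p(n)$ random bits. Since the verifier is polynomial-time, every question it asks has length at most $q(n)$, and every answer it reads has length at most $q(n)$; a (deterministic) strategy for prover $i$ is therefore just a function $P_i: \{0,1\}^{\leq q(|z|)} \to \{0,1\}^{\leq q(|z|)}$, which can be specified by a lookup table of size $2^{O(q(|z|))}$. A $\NEXP$ machine on input $z$ then nondeterministically guesses such tables for $P_1$ and $P_2$, and then, in deterministic exponential time, computes the exact acceptance probability by iterating over all $2^{p(|z|)}$ random strings $r$, simulating the interaction of $\bf V$ with $P_1, P_2$ on each, and tallying the fraction that accept; it accepts iff this fraction is at least $2/3$. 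By the completeness/soundness guarantees of MIP, the $\NEXP$ machine accepts iff $z \in \bf L$.

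For the inclusion $\NEXP \subseteq \MIP$, which is the main obstacle, the plan is to use the arithmetization techniques that are the ancestor of the PCP theorem. Let $\bf L \in \NEXP$; by a standard reduction one may assume $\bf L$ is a padded version of an $\NP$-complete problem such as 3-SAT, so that deciding $z \in \bf L$ reduces to checking the satisfiability of an exponentially-large 3-CNF formula $\varphi_z$ whose clauses are succinctly described by the verifier from $z$. A satisfying assignment $A$ to $\varphi_z$ is a Boolean function on $\{0,1\}^m$ with $m = \operatorname{poly}(|z|)$. I would have each prover claim to hold $A$, encoded as its \emph{multilinear (or low-degree) extension} $\tilde A$ over a finite field $\mathbb{F}$ of size polynomial in $|z|$. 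The verifier then performs two kinds of tests: (i) a \emph{low-degree test}, querying each prover at a handful of random points to confirm the function they purport to hold is (close to) a low-degree polynomial; and (ii) a \emph{sum-check-style consistency test} that the polynomial encodes a satisfying assignment, by reducing the exponentially large assertion ``$A$ satisfies every clause of $\varphi_z$'' to checking a polynomial identity at a random point. Crucially, the second prover is used to cross-check the first: when the first prover is forced, over the course of the sum-check, to reveal $\tilde A$ at a point, the verifier independently queries the second prover at the same (or a related) point and rejects on any discrepancy.

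The hard part is getting all the parameters to mesh: the verifier runs in time $\operatorname{poly}(|z|)$, so it can only afford to ask and read $\operatorname{poly}(|z|)$-bit messages, but the underlying witness has exponential size, so everything must be done ``locally'' in the low-degree encoding. The technical linchpins are (a) the low-degree test of Babai--Fortnow--Lund, which ensures that a function passing a handful of random line tests is close to a genuine low-degree polynomial, and (b) the arithmetized sum-check protocol, which trades an exponential sum for $m$ rounds of polynomial evaluations (here packaged, via the usual round-reduction tricks, into a single round of interaction by having the verifier send all its random challenges at once and having the provers send polynomial tables in reply). Perfect completeness is achieved because honest provers holding the true $\tilde A$ always pass, while soundness follows because a cheating prover strategy would, via the two-prover consistency checks and the low-degree test, commit to \emph{some} low-degree polynomial, which the sum-check then forces to encode a genuine satisfying assignment, contradicting $z \notin \bf L$. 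Combined with the first inclusion, this yields $\MIP = \NEXP$.
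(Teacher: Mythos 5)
The paper does not prove this theorem; it cites \cite{BFL} as a black box and offers only the informal remark that a second prover lets the verifier spot-check random polynomial-size portions of an exponentially long proof. So there is no in-paper argument to compare yours against, and I will evaluate the proposal on its own terms.

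Your argument for the inclusion $\MIP\subseteq\NEXP$ is correct and essentially complete: in the two-prover one-round model (which the paper fixes as the default), a deterministic prover strategy is a lookup table of exponential size, a nondeterministic machine can guess both tables, and then it can deterministically average the verifier's output over all $2^{p(|z|)}$ random strings in time $2^{O(\operatorname{poly}(|z|))}$. Thresholding at $2/3$ and invoking the completeness/soundness gap finishes it.

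For $\NEXP\subseteq\MIP$ you have assembled the right ingredients — low-degree extension over a small field, a low-degree test, a sum-check argument reducing exponential clause-checking to polynomial identity tests, and a second prover to cross-check evaluations in lieu of oracle access — but two points deserve flagging. First, the reduction is not to ``a padded version of an $\NP$-complete problem'' but to \emph{succinct} (or \emph{oracle}) 3-SAT, where a polynomial-size circuit describes the clauses of an exponentially long 3-CNF; it is precisely this succinct description that lets the verifier arithmetize a random clause locally. Second, and more substantively, you collapse the inherently multi-round sum-check to a single round by ``having the verifier send all its random challenges at once.'' This parallelization is notoriously delicate: a naive implementation allows the two provers to correlate answers across subtests, and historically the claim that poly-round $\MIP$ reduces to two provers and one round needed the oracularization technique of Lapidot--Shamir and Feige--Lov\'asz before it was sound. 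Your sketch treats the round collapse as a ``usual trick,'' but it and the soundness analysis of the low-degree test are where the real work of the theorem lives, and the soundness argument in your closing paragraph does not follow without them. None of this means you have taken a wrong approach — you are correctly summarizing the structure of the BFL proof — but as written the hard direction is an outline of the ideas rather than a proof.
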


As mentioned before, it is believed that $\PSPACE\not=\NEXP$ (else $\P=\NP$).  Consequently, it appears that the jump from one to more than one prover does indeed lead to a computationally superior verifier.

\subsection{Nonlocal games}\label{sec4:nonlocal}

It will be useful to recast our description of the class $\MIP$ in terms of so-called \emph{nonlocal games}, a certain collection of two-person games.  (The terminology ``nonlocal'' comes from the connection with Bell's theorem on quantum nonlocality, as we discuss later.)

Consider a language $\bf L$ in $\MIP$ as witnessed by the polynomial-time verifier $\bf V$.  Given input $z$ and a sequence of random bits $r$, by computing $\mathbf{V}(z,r)$, we are really computing the two ``questions'' $x$ and $y$ (sequences of bits of length polynomial in $|z|$) that are being sent to the two provers, who we will call Alice and Bob, following typical quantum information nomenclature. Alice and Bob, employing their deterministic strategies $A$ and $B$, then respond with their ``answers'', say $a:=A(x)$ and $b:=B(y)$, and then the prover calculates $\mathbf{V}(z,r,x,y,a,b)$ to decide whether or not to accept their answers.  Whether or not $z$ belongs to $\bf L$ then corresponds to the expected value over a randomly chosen $r$ that the verifier returns $\mathbf{V}(z,r,x,y,a,b)=1$.  Note that the polynomial time requirement on $\bf V$ allows us to assume that the set of possible answers only contains bits that are of size at most some fixed polynomial in $|z|$. 

This reformulation leads us to the following notion:
A \emph{nonlocal game with $k$ questions and $n$ answers} is a pair $\frak G=(\pi,D)$, where $\pi$ is a probability distribution on $[k]\times [k]$ and $D:[k]\times [k]\times [n]\times [n]\to \{0,1\}$ is the decision predicate for the game.  Here, $[k]:=\{1,\ldots,k\}$ and similarly for $[n]$.  A \emph{strategy} for the players consists of a conditional probability $p(a,b|x,y)$ expressing the probability that Alice and Bob respond with answers $a$ and $b$ if they are asked questions $x$ and $y$ respectively.  We view such a strategy $p$ as an element of $[0,1]^{k^2n^2}$.  Above, we only considered \emph{deterministic strategies}, namely those $p$ for which there are functions $A,B:[k]\to [n]$ such that $p(A(x),B(y)|x,y)=1$ for all $x,y\in [k]$.  We let $C_{det}(k,n)\subseteq [0,1]^{k^2n^2}$ denote the set of such deterministic strategies.  Later, we will consider several other sets of strategies.

Given a strategy $p$, the \emph{value of the game $\frak G$ with respect to the stratey $p$} is the expected value the players win $\frak G$ if they play according to $p$, that is, 
$$\val(\frak G,p):=\sum_{(x,y)\in [k]\times [k]}\pi(x,y)\sum_{(a,b)\in [n]\times [n]}D(x,y,a,b)p(a,b|x,y).$$

We set $\val(\frak G):=\sup_{p\in C_{det}(k,n)}\val (\frak G,p)$ and refer to this as the \emph{classical value} of the game $\frak G$.

% In order to ensure that the two players are not communicating above, we should insist that the strategy is \emph{non-signalling}, meaning that, for each $x\in [k]$ and $a\in [n]$, $p(a|x)$ should not depend on which question Bob was asked (as they should not be able to communicate), that is, for each $y,y'\in [k]$, we should have $\sum_{b\in [n]} p(a,b|x,y)=\sum_{b\in [n]}p(a,b| x,y')$ and similarly for Bob.

% Thus, in our definition of $\MIP$ above, given a string $z$, the verifier $V$ is really prescribing a nonlocal game $\frak G_z$ (this assignment is ``efficient'').  Moreover, each prover's strategy is simply to deterministically answer each question they are given, that is, for each $x\in [k]$, there is a unique $a\in [n]$ such that $p(a|x)=1$.  

We can now rephrase the definition of $\MIP$ in terms of nonlocal games:  a language $\mathbf{L}$ belongs to $\MIP$ if and only if there is an ``efficient mapping'' $z\mapsto \frak G_z$ (in the precise sense described earlier in this subsection) so that:
\begin{itemize}
    \item If $z\in \mathbf{L}$, then $\val(\frak G_z)\geq \frac{2}{3}$.
    \item If $z\notin \mathbf{L}$, then $\val(\frak G_z)\leq \frac{1}{3}$.
\end{itemize}

The class $\MIP^*$ appearing in the result $\MIP^*=\operatorname{RE}$ is defined in the analogous way except that we replace classical strategies by quantum strategies.  But first, an interlude to explain all things quantum.

\section{A quantum detour}\label{sec5}

In this section, we introduce the quantum prerequisites necessary to understand the definition of the complexity class $\mip^*$.  Our presentation of quantum mechanics is fairly standard and can be found in any good textbook on quantum mechanics.  As mentioned above, we also found Paulsen's lecture notes \cite{paulsen} very helpful as well.

\subsection{Quantum measurements}\label{sec5:quantum}

In quantum mechanics, one associates to each physical system a corresponding Hilbert space $\cal H$.  The \emph{state} of the system at any given time is given by a unit vector $\xi\in \cal H$.  The state of the system evolves linearly according to a certain partial differential equation (the \emph{Schr\"odinger equation}) \textbf{until it is measured}.  A measurement should be thought of as an experiment on the system which has a finite number, say $n$, possible outcomes.  (There are also experiments that can have a countably infinite set of outcomes, say the infinite discrete set of energies of some particle, or even a continuum of outcomes, say when measuring the position or momentum of a particle; for the purposes of this paper, it suffices to focus on the case of finitely many outcomes.)  Formally, a measurement with $n$ outcomes consists of $n$ bounded operators $M_1,\ldots,M_n\in \cal B(\cal H)$.  The \emph{Born rule} states that, if the state of the system is $\xi$ upon measurement, then the probability that the $i^{\text{th}}$ outcome happens is given by $\|M_i\xi\|^2$.  Furthermore, in case the $i^{\text{th}}$ outcome is measured, the \emph{collapse dynamics} tells us that the state of the system instantaneously (and discontinuously) changes to $M_i(\xi)/\|M_i(\xi)\|$.  Since the sum of the outcome probabilities must be $1$, we see that
$$1=\sum_{i=1}^n \|M_i\xi\|^2=\sum_{i=1}^n \langle M_i^*M_i\xi,\xi\rangle.$$  Since this equality must hold true for all unit vectors $\xi\in \cal H$, it follows that $\sum_{i=1}^n M_i^*M_i=I_{\cal H}$.  Consequently, any sequence $M_1,\ldots,M_n\in \cal B(\cal H)$ satisfying this latter property constitutes a measurement of the system.

If one is only interested in the probabilities of the outcomes rather than the outcomes themselves (as we will be when we return to our discussion of nonlocal games), then it simplifies matters by replacing a measurement as above by a sequence $P_1,\ldots,P_n$ consisting of positive operators which sum up to $I_{\cal H}$ and interpret the probability that the $i^{\text{th}}$ outcome is obtained when the system is in state $\xi$ to be given by $\langle P_i\xi,\xi\rangle$.  Such a collection of positive operators is called a \textit{positive operator-valued measure} or \emph{POVM} on $\cal H$ (the terminology comes from spectral theory).  If one specializes even further to the case that each $P_i$ is not only a positive operator but in fact a projection, then one speaks of \emph{projection-valued measures} (or \emph{PVMs}) on $\cal H$.  Note then that the projections are automatically pairwise orthogonal, so a PVM on $\cal H$ with $n$ outcomes corresponds to a decomposition of $\cal H$ into $n$ orthogonal subspaces.

Many introductions to quantum mechanics discuss the measurements of \emph{observables}.  An observable for the physical system is a self-adjoint operator $\cal O$ on $\cal H$.  Supposing for simplicity that $\cal H$ is finite-dimensional, the Spectral Theorem implies that we can find a PVM $P_1,\ldots,P_n$ on $\cal H$ such that the $P_i$'s correspond to the projections onto the various eigenspaces of $\cal H$ corresponding to $\cal O$.  The self-adjointness assumption on $\cal O$ further implies that the corresponding eigenvalues are real numbers, whence we can interpret them as corresponding to actual possible physical measurements.  Conversely, given any PVM $P_1,\ldots,P_n$ on $\cal H$ and real numbers $\lambda_1,\ldots,\lambda_n$, one has an observable $\cal O:=\sum_{i=1}^n \lambda_i P_i$.

A simple example of the content of the previous paragraph is given by the \emph{spin} of an electron.  The spin of an electron along any choice of axis comes in one of two flavors:  ``up'' or ``down.''  (By the way, this is what is ``quantum'' about quantum mechanics:  many attributes of a physical system come in a discrete set of possibilities.)  For the sake of completeness, let us say that we are measuring spin along the vertical axis.  The state of the electron is given by a unit vector $\psi$ in the Hilbert space $\bb C^2$.  We view the usual orthonormal basis $\{e_1,e_2\}$ for $\bb C^2$ as representing the two possible spin values:  so $e_1$ corresponds to ``up'' while $e_2$ corresponds to ``down.''  Now a general unit vector $\psi$ in $\bb C^2$ can be written in the form $\psi=\alpha_1e_1+\alpha_2e_2$ for unique complex numbers $\alpha_1,\alpha_2\in \bb C$ for which $|\alpha_1|^2+|\alpha_2|^2=1$.  What is strange and new about quantum mechanics is that a given electron can be in a state that is neither up nor down.  More specifically, when neither $\alpha_1$ nor $\alpha_2$ are $0$, the electron is considered in a \emph{superposition} of the two states and will only reveal one of these two states upon a measurement of the spin, that is, using the PVM $P_1,P_2$ on $\bb C^2$ consisting of the projections onto the coordinate axes.  The state of the electron merely gives us probabilistic information as to which of the two outcomes will happen upon such a measurement.  Moreover, once the measurement has been made, the new state of the electron instantaneously and discontinuously jumps to the unit vector $e_1$ or $e_2$ corresponding to the outcome of the measurement just made.  This reflects the fact that if another measurement is made directly following the first measurement, the same outcome will occur.  It is important to make the distinction between superposition and definite measurement outcome with probabilities measuring ignorance of the actual value.

The above description of quantum mechanics we have given is the standard or \emph{Copenhagen} interpretation and it is a mighty big pill to swallow upon a first reading.  (Technically speaking, this is really the \emph{von Neumann-Dirac} formulation of the theory; however, it has become common parlance to refer to this interpretation as the Copenhagen interpretation, even though Niels Bohr himself explicitly disagreed with this formulation.)  Perhaps the biggest point of contention is the question ``What constitutes a measurement?'' together with the follow-up question ``Why did the state of the electron \emph{collapse} to one of the two basis states?''  This is the so-called \emph{measurement problem} and is a very popular topic of debate amongst philosophers and theoretical physicists.  It has lead to a plethora of alternate interpretations of quantum mechanics (often yielding mathematically equivalent predictions); a good introduction to these foundational issues is Barrett's recent book \cite{barrett}.

To keep the strangeness coming, suppose that we want to measure spin in the horizontal direction instead of the vertical direction.  It turns out that the appropriate basis to consider now is now $\{v_1,v_2\}$, where $v_1=\frac{1}{\sqrt{2}}e_1+\frac{1}{\sqrt{2}}e_2$ and $v_2=\frac{1}{\sqrt{2}}e_1-\frac{1}{\sqrt{2}}e_2$.  In other words, the PVM $Q_1,Q_2$ consisting of the orthogonal projections onto the lines spanned by $v_1$ and $v_2$ respectively constitutes a measurement of the spin of the electron in the horizontal direction.  Suppose that an electron has a definite spin, say up, in the vertical direction, whence its state is $e_1$.  In the eigenbasis for the observable of spin in the horizontal direction, the state becomes $e_1=\frac{1}{\sqrt{2}}v_1+\frac{1}{\sqrt{2}}v_2$.  Consequently, a measurement of an electron with an up spin in the vertical direction will yield a spin of either left or right in the horizontal direction with equal probability.  Even more strangely, suppose that the electron that had a definite vertical spin that was up was then measured in the horizontal direction and the outcome was spin left, that is, the measurement led to an outcome state of $v_1$.  Suppose further that a subsequent measurement of the electron in the vertical direction was performed.  Since $v_1=\frac{1}{\sqrt{2}}e_1+\frac{1}{\sqrt{2}}e_2$, we see that the outcome of the measurement now yields up or down with equal probability.  Thus, the measurement in the horizontal direction destroyed the definite spin the electron had in the vertical direction!

% There is a more general notion of measurement that is also useful when only the statistics are relevant.  A POVM on $\cal H$ is a set $\{A_1,\ldots,A_n\}$ of positive operators on $\cal H$ such that $\sum_i A_i=1$.  If we measure using this measurement in state $\xi$, then the outcome $i$ will occur with probability $\langle A_i\xi,\xi\rangle$ and the new state of the system will be $A_i\xi$ (renormalized).  IS THAT RIGHT?  A theorem known as Naimark's Dilation Theorem tells us that any POVM on a Hilbert space can be ``turned in to'' a PVM on a larger space...

% Anyone familiar with quantum mechanics will see that we omitted discussion of unitary evolution or Schrodinger's equation.  This just doesn't come in to play with what we have to talk about.

\subsection{The spookiness of entanglement}\label{sec5:spookiness}

The postulates of quantum mechanics tell us that if $\cal H_A$ and $\cal H_B$ are the Hilbert spaces representing two physical systems, then the appropriate Hilbert space for studying the composite system is the tensor product space $\cal H_A\otimes \cal H_B$.  The fact that elements of the tensor product need not be merely simple tensors leads to the fascinating concept of \emph{entanglement}, which, in some sense, is the essence of this entire story!

In order to get an idea of the utility of entanglement as a resource in, say, quantum information theory, we present the example of \emph{superdense coding}.  We set $\psi_{EPR}:=\frac{1}{\sqrt{2}}(e_1\otimes e_1+e_2\otimes e_2)\in \bb C^2\otimes \bb C^2\cong \bb C^4$.  This quantum state is known as the \textit{EPR state}, named after Einstein, Podolsky, and Rosen.  We will have more to say about this state and why EPR were considering it shortly.  Let us imagine that Alice and Bob each possess an electron and the joint state of the vertical spins of the two electrons is $\psi_{EPR}$, that is, the electrons are in an equal superposition of both spins being up or both spins being down.  Furthermore, imagine that Alice and Bob are really (really) far away from each other.  We show how Alice and Bob can utilize the fact that their electrons are in this entangled state in order for Alice to send two classical bits of information to Bob by just sending one \emph{qubit} of information, that is, by Alice sending Bob her electron (after she has done some work on it first).

Depending on what two bits of information Alice wishes to send to Bob, she performs one of the following actions to her electron:
\begin{itemize}
    \item $\psi_{11}:=(I\otimes I)\psi_{EPR}=\frac{1}{\sqrt 2}(e_1\otimes e_1+e_2\otimes e_2)$
    \item $\psi_{12}:=(X\otimes I)\psi_{EPR}=\frac{1}{\sqrt 2}(e_2\otimes e_1+e_1\otimes e_2)$
    \item $\psi_{21}:=(Z\otimes I)\psi_{EPR}=\frac{1}{\sqrt 2}(e_1\otimes e_1-e_2\otimes e_2)$
    \item $\psi_{22}:=(ZX\otimes I)\psi_{EPR}=\frac{1}{\sqrt 2}(e_1\otimes e_2-e_2\otimes e_1)$
\end{itemize}

Here, $X=\left(\begin{matrix}0 & 1 \\ 1 & 0\end{matrix}\right)$, the so-called \emph{bit-flip operator}, and $Z=\left(\begin{matrix} 1 & 0 \\ 0 & -1\end{matrix}\right)$, the so-called \emph{phase-flip operator}.

One can check that the four vectors $\psi_{11},\psi_{12},\psi_{21},\psi_{22}$ form an orthonormal basis for $\bb C^2\otimes \bb C^2\cong \bb C^4$ known as the \emph{Bell basis}.  Consequently, any observable $\cal O$ on $\bb C^4$ with distinct eigenvalues and with the Bell basis vectors as eigenvectors can be used to distinguish these vectors, that is, when the state of the system is $\psi_{ij}$, a measurement of $\cal O$ will yield $\psi_{ij}$ with probability one, whence Bob knows which of the four actions above Alice took and thus knows which pair of bits she wished to send to Bob.  (One can be explicit about the observable $\cal O$, namely $\cal O=(H\otimes I_{\bb C^2})C$, where $H:=\frac{1}{\sqrt{2}}(X+Z)$ is the so-called \emph{Hadamard operator} and $C$ is the so-called \emph{controlled not operator}.) 

% Bob then takes the new state $\psi'$ and puts it though $(H\otimes I)\cdot C$, where $C$ is the controlled not gate.  It turns out that the resulting state is $e_i\otimes e_j$, where $(i,j)$ is the pair of bits that Alice wanted to communicate.  So by measuring these two bits individiually, Bob will, with certainty, obtain the pair of bits that Alice wanted to communicate.

Notice something peculiar about the EPR state:  if the state of two electrons is given by $\psi_{EPR}$, then they are in a superposition of either both electrons having spin up or both electrons having spin down (with equal probability).  However, if Alice performs a measurement of the spin of her electron and sees a result of spin up, she knows, with absolute certainty, that a subsequent measurement of the spin of Bob's electron will also be spin up.  Thus, while Bob's electron did not have a determinate spin before Alice's measurement, the result of Alice's measurement \emph{instantaneously} gave a determinate value to the spin of Bob's electron. 

Einstein was worried by this phenomenon, which he called ``spooky action at a distance.''  Together with Podolsky and Rosen \cite{EPR}, they used the EPR state to present an argument for the \emph{incompleteness} of quantum mechanics.  The gist of the argument is as follows:  suppose that Alice and Bob share a pair of electrons in the EPR state $\psi_{EPR}$ and that Alice and Bob are again really (really) far apart.  Suppose that Alice measures her electron and sees the result spin up.  Then Alice knows with 100\% certainty that if Bob were to measure his electron, then it must also have a determinately up spin.  Ditto for a measurement result of spin down.  Since Alice can predict with certainty the outcome of Bob's measurement and since her measurement could not possibly have altered the spin of Bob's electron, Bob's spin must have a definite value, independent of whether or not Alice were to measure it.  This definite value must represent some element of physical reality and if quantum mechanics were to be complete, there must be some counterpart of this physical reality in the theory.  Since there is nothing in the description of the EPR state which specifies a determinate value for Bob's spin, quantum mechanics must be incomplete.  

It gets even worse, for if Alice were to decide to measure her spin along a different axis, say the horizontal axis, then once again the result of her measurement would allow her to definitively conclude the value of Bob's electron's spin in the horizontal axis.  In this case, \emph{both} the  vertical and horizontal spins would have definite, predetermined values, which is a contradiction to the fact that knowing, say, the vertical spin of an electron forces us to be maximally uncertain about the horizontal spin of the electron.  So in some sense, the EPR argument even posits that quantum mechanics is inconsistent!

The underlying philosophy that EPR have in their argument is usually dubbed \emph{local realism}:  the term ``local'' refers to the assumption that Alice's measurement could not have affected Bob's electron since they are so far away and communication can not travel faster than the speed of light, while the term ``realism'' refers to the statement that the fact that one can determine the spin of Bob's electron with certainty implies that there must be some real, predetermined value to the spin.  EPR believed that there should be some ``hidden variable'' explaining this predetermined spin, allowing them to preserve their classical, locally real intuitions.

John Bell \cite{Bell} set up a thought experiment to determine whether there could indeed be a formulation of quantum mechanics that was complete and adhered to the local realist philosophy.  He showed that this is in fact impossible by showing that a small set of local realist assumptions leads to an inequality on the expected outcome of a certain experiment and that a particular quantum measurement could violate that inequality.  Moreover, it is actually experimentally testable whether or not this inequality holds in nature.  Spoiler alert:  the inequality is violated by nature, whence quantum mechanics comes out victorious!  Thus, while seemingly strange, quantum mechanics lies in contradistinction to the local realist assumptions.

Besides being an intellectually fascinating story, there turns out to be a direct link between these Bell inequalities and the phenomena of having quantum strategies for nonlocal games that exceed all possible classical values, which we now explain.  (The idea of treating the violation of Bell-type inequalities as quantum strategies for nonlocal games that exceed the classical value of the game seems to have first been seriously studied by Cleve, Hoyer, Toner, and Watrous \cite{CHTW}).

We have already discussed deterministic strategies for nonlocal games.  One may imagine incorporating a probabilistic component to these strategies by considering a probability space $(\Omega,\mu)$ and determinstic strategies $A_\omega:[k]\to [n]$ and $B_\omega:[k]\to [n]$, one for each $\omega\in \Omega$.  Consequently, the players can randomly (according to $(\Omega,\mu)$) select an $\omega$ and then play deterministically according to $A_\omega$ and $B_\omega$.  In terms of the EPR experiment, one may think of $\omega$ as the ``hidden variable'' for which we do not have perfect knowledge but that if we were to know it, then things would behave deterministically.  The probability space $(\Omega,\mu)$ represents our epistemic (lack of) knowledge of the hidden variable.  Consequently, we now have probabilistic strategies $$p(a,b|x,y):=\mu(\{\omega \in \Omega \ : \ A_\omega(x)=a \text{ and }B_\omega(y)=b\}),$$ which are called \emph{local strategies}, the term ``local'' referring to the fact that each player's output still only depends on their local environment.  The set of such local strategies is denoted $C_{loc}(k,n)$.  It is straightforward to see that $C_{loc}(k,n)$ is a compact, convex subset of $[0,1]^{k^2n^2}$ whose extreme points are the elements in $C_{det}(k,n)$.  Moreover, it is clear that every element of $C_{loc}(k,n)$ is a convex combination of elements of $C_{det}(k,n)$, whence $\val(\frak G)=\sup_{p\in C_{loc}(k,n)}\val (\frak G,p)$ for any nonlocal game $\frak G$ with $k$ questions and $n$ answers.

On the other hand, we can consider quantum strategies for nonlocal games as follows.  We let $C_{q}(k,n)$ consist of those strategies $p$ for which $$p(a,b|x,y)=\langle (A^x_a\otimes B^y_b)\psi,\psi\rangle,$$ where, for each $x,y\in [k]$, $A^x=(A^x_a)_{a\in [n]}$ and $B^y=(B^y_b)_{b\in [n]}$ are POVMS with $n$ outcomes on \emph{finite-dimensional} Hilbert spaces $\cal H_A$ and $\cal H_B$ respectively.  We call such a strategy $p$ a \emph{quantum strategy}.  These stratgies correspond to Alice and Bob sharing a (possibly entangled) state $\psi$ of their composite system $\cal H_A\otimes \cal H_B$ and performing measurements $A^x$ and $B^y$ on their portion of the state upon receiving questions $x$ and $y$ respectively.  Using a technique known as \emph{Naimark dilation} (a special case of the Stinespring Dilation theorem from above), one can replace POVMs with the more convenient to use PVMs without altering the definition of $C_q(k,n)$.  It is a straightforward argument to show that $C_q(k,n)$ is a convex subset of $[0,1]^{k^2n^2}$.

We have that $C_{loc}(k,n)\subseteq C_{q}(k,n)$.  Indeed, since every element of $C_{loc}(k,n)$ is a convex combination of deterministic strategies and $C_q(k,n)$ is convex, it suffices to show that every determinstic strategy $p$ is contained in $C_q(k,n)$.  However, this is quite easy:  if $A:[k]\to [n]$ is the function determining Alice's strategy, let $A^x$ be the POVM on $\bb C$ for which $A^x_{A(x)}=I$ and $A^x_{a}=0$ for all $a\not=A(x)$.  Bob's POVM $B^b_y$ is defined in the analogous way.  It follows that,for any state $\xi\in \bb C\otimes \bb C$, we have that $p(a,b|x,y)=\langle (A^x_a\otimes B^y_b)\xi,\xi\rangle$. 

Given a non-local game $\frak G$, we define its \emph{entangled value} to be 

$$\val^*(\frak G):=\sup_{p\in C_{q}(k,n)}\val(\frak G,p).$$  By the previous paragraph, we have that $\val(\frak G)\leq \val^*(\frak G)$ for any nonlocal game $\frak G$.  The idea behind Bell's theorem, recast in the setting of nonlocal games, is that there are nonlocal games $\frak G$ for which $\val(\frak G)<\val^*(\frak G)$.

For example, we consider the following game, known as the \emph{CHSH game}.  (The acronym CHSH stands for Clauser, Horne, Shimony, and Holt,the researchers responsible for the CHSH inequality, a Bell-type inequality that was one of the first to be experimentally testable.)  The CHSH game $\frak G_{CHSH}$ is a game with $k=n=2$.  The question distribution is the uniform distribution on $[2]\times [2]$ and with decision predicate $D$ given by the following conditions:
\begin{itemize}
    \item If $x=1$ or $y=1$, then Alice and Bob win if and only if their answers agree.
    \item If $x=y=2$, then Alice and Bob win if and only if their answers disagree.
\end{itemize}

By inspecting all determinstic strategies, one finds that $\val(\frak G_{CHSH})=\frac{3}{4}$.  However, the entangled value of the game satisfies $\val^*(\frak G)=\cos^2(\frac{\pi}{8})\approx 0.85>\val(\frak G_{CHSH})$.  The interested reader can find the details for this calculation in \cite[Section 3.1]{CHTW}.  We merely point out that a quantum strategy for achieving $\val^*(\frak G)$ uses the EPR state $\psi_{EPR}$.

% We discuss this in the next section...

% ACTUALLY NO DO THIS HERE AND NOW.  THEN DEFINE MIP* IN THE NEXT SECTION.

% An averaging argument shows that even if each prover uses a ``randomized'' strategy, that is, if for each $x\in [k]$, they give themselves a probability distribution $p(a|x)$, $\val(\frak G,p)$ is unchanged.  THIS IS NOT RIGHT.  NEED SHARED RANDOMNESS.  THE ACTUAL SET IS THE CONVEX CLOSURE OF THESE CORRELATIONS.  THE EXTREME POINTS ARE ALL DETERMINISTIC, THAT IS WHY ONE CAN NEVER DO BETTER WITH RANDOMIZED STRATEGIES THAN WITH DETERMINISTIC ONES.  MENTION CONNECTION WITH HIDDEN VARIABLE THEORIES.  PROBABLY PUT THIS WHOLE DISCUSSION LATER.  The set of correlations $p(a,b|x,y)$ thus obtained is called the set of \emph{classical correlations} $C_c(k,n)$.  We set $\val(\frak G):=\sup_{p\in C_c(k,n)}\val (\frak G,p)$.  

% ``On the relation between Bell inequalities and nonlocal games'' and ``Consequences and limits of nonlocal strategies''

% Why does Fritz talk about CHSH being too simple?  MAKE SURE TO ADJUST THE RESULTS IN THE NEXT SECTION TO EXCLUDE THE CASE (2,2).

\subsection{MIP*}\label{sec5:mip*}

Based on the nonlocal game definition of the complexity class $\MIP$ and our recent discussion of quantum strategies for nonlocal games, it should be clear how to define the complexity class $\MIP^*$:  the language $\bf L$ belongs to $\MIP^*$ if there is an efficient mapping (in the precise sense from Subsection \ref{sec4:nonlocal}) $z\mapsto \frak G_z$ from strings to non-local games such that:
\begin{itemize}
    \item If $z\in \bf L$, then $\val^*(\frak G_z)\geq \frac{2}{3}$.
    \item If $z\notin \bf L$, then $\val^*(\frak G_z)\leq \frac{1}{3}$.
\end{itemize}

We remark that the definition of $\mip^*$ first appeared in the aforementioned paper \cite{CHTW}.

To be fair, we are really defining the complexity class $\MIP^*(2,1)$, which only has two provers and one round of interactions.  There are ways to define similar classes that allow more verifiers and rounds, but the eventual result $\MIP^*=\operatorname{RE}$ will show they yield the same class anyways, so we will not bother.

So how do the classes $\MIP$ and $\MIP^*$ relate?  The lesson from the previous section was that provers that share entanglement can win some nonlocal games more often than they ``rightfully should.''  In other words, it seems that it might be the case that for a language $\bf L$ that belongs to $\MIP$ and for a string $z$ that does \emph{not} belong to $\bf L$, the provers might have a strategy for the corresponding game $\frak G_z$ whose value exceeds $\frac{1}{3}$.

Nevertheless (and perhaps somewhat surprisingly), Ito and Vidick \cite{IV} showed that $\MIP\subseteq \MIP^*$.  The rough idea behind this inclusion is that it suffices to show that $\NEXP\subseteq \MIP^*$ and the games involved in the proof that $\MIP=\NEXP$ are such that their classical and quantum values are approximately the same.

Later, Natarajan and Wright \cite{NW} showed that $\NEEXP\subseteq \MIP^*$.  Recalling that $\MIP=\NEXP\subsetneq \NEEXP$, this shows that $\MIP\subsetneq \MIP^*$, whence adding entanglement does indeed strictly increase the computational power of the verifier.

So exactly how much extra power does entanglement give us?  Besides the result mentioned in the last paragraph, there was only an a priori seemingly silly upper bound on $\mip^*$, namely $\MIP^*\subseteq \operatorname{RE}$, where $\operatorname{RE}$ (which is short for \emph{recursively enumerable}) is the complexity class which consists of those languages $\bf L$ for which there is a Turing machine $\bf M$ (with absolutely no efficiency requirements whatsoever) whose domain is $\bf L$, that is, $\bf L$ consists of the set of inputs for which $\bf M$ halts.  An alternative formulation of $\operatorname{RE}$ is helpful:  $\bf L$ belongs to $\operatorname{RE}$ if there is a total computable function whose range is $\bf L$ (this is why modern computability theorists refer to this as being \emph{computably enumerable} or \emph{CE}).  To see the inclusion $\mip^*\subseteq \operatorname{RE}$, note first that, given any dimension $d$, one can effectively enumerate a countable set of quantum strategies of dimension $d$ that is dense in the set of such strategies and for which one can effectively compute $\val(\frak G,p)$ for any such quantum strategy $p$.  By letting $d$ tend to $\infty$, if one ever finds such a strategy $p$ for which $\val(\frak G_z,p)>\frac{1}{2}$, one knows that $z\in \bf L$ (and one is guaranteed that this will happen for some such $p$ if $z\in \bf L$).  Note that if $z\notin \bf L$, this procedure will never convince us that $z\notin \bf L$ because maybe we did not wait long enough and a higher dimensional strategy would indeed have convinced us if we were just a bit more patient.

The amazing fact proven in \cite{MIP*} is that this upper bound is actually tight!  That is, $\MIP^*=\operatorname{RE}$ holds!  More specifically, the authors prove that there is an effective mapping $\mathbf{M}\mapsto \frak G_{\mathbf{M}}$ from (codes for) Turing machines to nonlocal games such that:
\begin{itemize}
    \item If $\mathbf{M}$ halts on the empty tape, then $\val^*(\frak G_{\mathbf{M}})=1$.
    \item If $\mathbf{M}$ does not halt on the empty tape, then $\val^*(\frak G_{\mathbf{M}})\leq \frac{1}{2}$.
\end{itemize}

The language consisting of codes for Turing machines that halt on the empty tape is known as the \emph{halting problem} $\mathbf{HALT}$.  Since the halting problem is complete for the class $\operatorname{RE}$, the inclusion $\operatorname{RE}\subseteq \MIP^*$ holds.

Irregardless of your interest in CEP, the equality $\mip^*=\operatorname{RE}$ is an amazing fact.  The halting problem is an undecidable problem (this follows from a simple diagonalization argument together with the fact that there is a so-called \emph{universal Turing machine}).  Nevertheless, if two cooperating but non-communicating provers share some quantum entanglement, they can reliably convince a verifier whether or not a given Turing machine halts!  This is a landmark intellectual achievement.

The proof of $\MIP^*=\operatorname{RE}$ is very complicated and we will not discuss it here.  The introduction to \cite{MIP*} does a great job outlining the essence of the proof.

The story of $\mip^*$ is about allowing quantum resources but keeping the computational model classical.  It is also interesting to ask what happens if we also replace the computational model we are using (i.e. the Turing machine) with a quantum computational model (e.g. quantum circuits).  It turns out that there is nothing to be gained here: by prefixing the corresponding classical complexity class with a ``Q'' to denote its counterpart defined using a quantum computational model, we have $\operatorname{QIP}=\IP$, $\operatorname{QMIP}=\MIP$, and $\operatorname{QMIP}^*=\MIP^*=\operatorname{RE}$; see \cite{quantumproofs} for the details.

\section{From MIP*=RE to the failure of CEP:  the traditional route}\label{sec6}

The derivation of the negative solution to the CEP from $\mip^*=\operatorname{RE}$ now proceeds in two steps:  we first show how $\mip^*=\operatorname{RE}$ leads to a negative solution to \emph{Tsirelson's problem} in quantum information theory; we show this in the first subsection.  In the second subsection, we then show how a negative solution to Tsirelson's problem naturally leads to a negative solution to Kirchberg's QWEP problem.  As we already observed in Subsection \ref{sec3:from}, this leads to a negative solution to the CEP.

\subsection{A negative solution to Tsirelson's problem}\label{sec6:tsirelson}

In order to explain Tsirelson's problem, we need to introduce some more collections of strategies.  First, we define $C_{qs}(k,n)$ exactly as in the definition of $C_q(k,n)$ except that we remove the finite-dimensionality assumptions on Alice's and Bob's state spaces $\cal H_A$ and $\cal H_B$; a strategy in this larger class is called a \emph{quantum spatial strategy}.  Quantum spatial strategies still correspond to the idea that Alice and Bob each have their own physical system and the state of their composite system is given by the tensor product.  It can be checked that there is no loss of generality in restricting attention to separable Hilbert spaces in the definition of $C_{qs}(k,n)$.  Moreover, by considering projections onto larger and larger finite-dimensional subspaces, we see that $C_{qs}(k,n)\subseteq \overline{C_q(k,n)}$, the closure of $C_q(k,n)$ in the usual topology it inherits from being a subset of $[0,1]^{k^2n^2}$.  Like $C_q(k,n)$, one can check that $C_{qs}(k,n)$ is convex. 

There is another model that is natural to consider which arises in quantum field theory.  In quantum field theory, one usually considers a large quantum system (maybe the system describing the whole universe!) and then it may be difficult to separate Alice and Bob's systems as isolated subsystems of the larger system.  The state of the large system is now given by some unit vector $\xi$ in a single Hilbert space $\cal H$ and Alice's and Bob's measurements are now given by families of POVMs $(A^x)_{x\in [k]}$ and $(B^y)_{y\in[k]}$ acting on this single Hilbert space $\cal H$.  Since we are still assuming that Alice and Bob are far away and so they cannot interact with each other, it is natural to assume that either of them can measure first without affecting the value of the other's measurements (or even that they can perform their measurements simultaneously).  According to von Neumann, the mathematical way of modeling this situation is to assume that Alice's and Bob's measurements commute with one another, that is, $A^x_aB^y_b=B^y_bA^x_a$ for all $x,y\in [k]$ and all $a,b\in [n]$.  The corresponding strategy is given by $p(a,b|x,y)=\langle A^x_aB^y_b\xi,\xi\rangle$ and is called a \emph{quantum commuting strategy}.  (Commutativity ensures that this a priori complex value lies in $[0,1]$.)  The set of quantum commuting strategies is denoted $C_{qc}(k,n)$.  Note that there is no requirement that $\cal H$ be finite-dimensional (although one can take it to be separable) and, in fact, requiring $\cal H$ to be finite-dimensional yields another description of the set $C_q(k,n)$ (see \cite{DLTW}).  Later, we will see that $C_{qc}(k,n)$ is a closed convex subset of $[0,1]^{k^2n^2}$ and that, like $C_q(k,n)$, one can use PVMs instead of POVMs without changing the definition.  

It is clear that $C_{qs}(k,n)\subseteq C_{qc}(k,n)$.  In \cite{tsirelson}, Boris Tsirelson claimed (without proof) that equality holds for all $(k,n)$.  After he was questioned about this, he realized that he could not prove this claim.  In fact, upon further reflection, he could not even establish whether or not $C_{qs}(k,n)$ was closed nor whether or not $C_{qa}(k,n):=\overline{C_{qs}(k,n)}=\overline{C_{q}(k,n)}$ coincided with $C_{qc}(k,n)$ (see his note \cite{tsirelson2}).  The question of whether or not $C_{qa}(k,n)=C_{qc}(k,n)$ for all $(k,n)$ is known as \emph{Tsirelson's problem}.  Incidentally, in \cite{slofstra} Slofstra showed that Tsirelson's original claim was false, that is, there is a pair $(k,n)$ such that $C_{qs}(k,n)\not=C_{qc}(k,n)$ and even strengthened this result to show that $C_{qs}(k,n)$ need not be closed, that is, there is $(k,n)$ for which $C_{qs}(k,n)\subsetneq C_{qa}(k,n)$.  

Fix a nonlocal game $\frak G$ with $k$ questions and $n$ answers.  It is clear that $$\sup_{p\in C_{qa}(k,n)}\val (\frak G,p)=\sup_{p\in C_{qs}(k,n)}\val (\frak G,p)=\val^*(\frak G).$$  However, we can also use elements of $C_{qc}(k,n)$ to define values of games, namely we define the \emph{commuting value} of $\frak G$ to be $\val^{co}(\frak G):=\sup_{p\in C_{qc}(k,n)}\val(\frak G,p)$.  It is clear that $\val^*(\frak G)\leq \val^{co}(\frak G)$ and that equality holds for all nonlocal games if Tsirelson's problem has an affirmative answer.  In fact, it can be shown that an affirmative answer to Tsirelson's problem is equivalent to the statement $\val^*(\frak G)=\val^{co}(\frak G)$ for all nonlocal games $\frak G$.

Recall that in our discussion of the inclusion $\mip^*\subseteq \operatorname{RE}$, we discussed how $\val^*(\frak G)$ can be effectively approximated from below.  On the other hand, it turns out that $\val^{co}(\frak G)$ can be effectively approximated from above.  This result follows from two facts:
\begin{itemize}
    \item There is a finitely presented group $G_{\frak G}$ (which in fact only depends on the number of questions and answers in $\frak G$) and an element $\eta_{\frak G}\in C^*(G_{\frak G})$ such that $\val^{co}(\frak G)=\|\eta_{\frak G}\|$ (see Corollary \ref{mainfritzcor} below), and
    \item For any finitely presented group $G$, one can always find effective upper bounds on the operator norm of $C^*(G)$ (a result due to Fritz, Netzer and Thom \cite[Corollary 2.2]{FNT}).
\end{itemize}

In Subsection \ref{sec7:negative} below, we offer a simple model-theoretic proof of the fact that $\val^{co}(\frak G)$ can be approximated from above, although, to be fair, we really establish a slightly different version of this fact sufficient to derive the failure of Tsirelson's problem from $\MIP^*=\operatorname{RE}$.  In any event, if $\val^*(\frak G)=\val^{co}(\frak G)$, that is, if Tsirelson's problem has an affirmative answer, then we can effectively approximate $\val^*(\frak G)=\val^{co}(\frak G)$ both from below and above, which would then imply that all languages in $\MIP^*$ are decidable, contradicting $\MIP^*=\operatorname{RE}$!

By the way, the argument in the preceding paragraph shows that $\MIP^{co}\subseteq \operatorname{coRE}$, where $\MIP^{co}$ is defined exactly like $\MIP^*$ but using the commuting value $\val^{co}$ of games instead of the entangled value $\val^*$ and $\operatorname{coRE}$ denotes the class of languages whose complement lies in $\operatorname{RE}$. It is currently unknown if this upper bound is sharp.

\subsection{A negative solution to Kirchberg's QWEP problem}\label{sec6:QWEP}

In this subsection, we show how a negative solution to Tsirelson's problem yields a negative solution to Kirchberg's QWEP problem.  We follow Fritz' presentation \cite{Fr} closely.

We begin by considering the abelian \cstar-algebra $\bb C^n$.  For each $a=1,\ldots,n$, we let $e_a$ denote the $a^{\text{th}}$ standard basis element of $\bb C^n$.  (We are using $a$ as the index since we are using the notation from nonlocal games.)  For any $k\geq 1$, we also consider the $k$-fold free product $\Asterisk_{x=1}^k\bb C^n$ and denote by $e^x_a$ the version of $e_a$ in the $x^{\text{th}}$-copy of $\bb C^n$.

\begin{prop}\label{Fritz1}

\

\begin{enumerate}
    \item There is a 1-1 correspondence between $n$-outcome POVMS $\{A_1,\ldots,A_n\}$ in $\cal B(\cal H)$ and ucp maps $\Phi:\bb C^n\to \cal B(\cal H)$ given by $\Phi(e_a):=A_a$.
    \item There is a 1-1 correspondence between $k$-tuples $\{A_1^x,\ldots,A^x_n\}_{x=1}^k$ of $n$-outcome POVMS in $\cal B(\cal H)$ and ucp maps $\Phi:\Asterisk_{x=1}^k\bb C^n\to \cal B(\cal H)$ given by $\Phi(e^x_a):=A^x_a$.
\end{enumerate}
\end{prop}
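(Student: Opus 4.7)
The plan is to treat the two parts sequentially: part (1) is an elementary computation exploiting that $\bb C^n$ is spanned by mutually orthogonal projections $e_1,\ldots,e_n$ summing to the identity, while part (2) lifts this to the free product via the universal property of the unital free product of C*-algebras in the ucp category (Boca's theorem).

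For part (1), given an $n$-outcome POVM $\{A_1,\ldots,A_n\}$, I would define $\Phi:\bb C^n\to \cal B(\cal H)$ by $\Phi(\sum_a c_a e_a):=\sum_a c_a A_a$. This is well-defined and linear since $\{e_a\}$ is a basis, unital since $\Phi(1)=\sum_a A_a = I_{\cal H}$, and positive since every positive element of $\bb C^n$ is of the form $\sum_a c_a e_a$ with $c_a\geq 0$, whose image is a nonnegative combination of positive operators. Complete positivity is then automatic because $\bb C^n$ is commutative, as noted in Subsection \ref{sec3:let's}. Conversely, given a ucp map $\Phi$, setting $A_a := \Phi(e_a)$ produces positive operators (each $e_a$ is a projection, hence positive) that sum to $\Phi(1) = I_{\cal H}$. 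The two constructions are manifestly mutual inverses.

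For part (2), the backward direction is immediate: one restricts a given ucp map $\Phi:\Asterisk_{x=1}^k \bb C^n\to \cal B(\cal H)$ to each copy of $\bb C^n$, obtains ucp maps $\Phi_x:\bb C^n\to \cal B(\cal H)$, and applies part (1) to recover POVMs $\{A_a^x\}_a$ with $A_a^x=\Phi(e_a^x)$. The forward direction is where the real content sits: starting from $k$ POVMs, part (1) yields ucp maps $\Phi_x:\bb C^n\to \cal B(\cal H)$ for each $x$, and these must be amalgamated into a single ucp map on $\Asterisk_{x=1}^k \bb C^n$ whose restriction to the $x$-th factor equals $\Phi_x$. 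This is precisely Boca's free product theorem for ucp maps: for any family of unital C*-algebras $A_x$ and any ucp maps $\Phi_x:A_x\to \cal B$ into a common unital C*-algebra $\cal B$, there exists a ucp map $\Phi:\Asterisk_x A_x\to \cal B$ restricting to each $\Phi_x$.

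The main (and essentially only) substantive input is Boca's theorem. If one wished to avoid quoting it, one could reproduce the proof by first applying Stinespring to each $\Phi_x$ to dilate it to a $*$-representation $\pi_x:\bb C^n\to \cal B(\cal K_x)$ with a common isometric embedding $V:\cal H\hookrightarrow \cal K_x$, then assembling the $\pi_x$ into a single $*$-representation $\pi:\Asterisk_x \bb C^n\to \cal B(\cal K)$ via the universal property of the C*-algebraic free product on a suitably constructed Hilbert space free product $\cal K$ with distinguished unit vector, and finally compressing $\pi$ back to $\cal H$ via a single global isometry. The delicate point is arranging the Hilbert space free product so that the compression simultaneously recovers each $\Phi_x$; for a clean exposition I would simply invoke Boca's theorem.
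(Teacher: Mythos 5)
Your proposal is correct and follows exactly the paper's route: part (1) uses that positive maps with commutative domain are automatically completely positive, and part (2) invokes Boca's free product theorem to amalgamate the individual ucp maps from part (1). The extra details you supply (explicit formula for $\Phi$, the Stinespring-based sketch of Boca) are consistent with what the paper leaves implicit.
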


\begin{proof}
The proof of (1) is easy to check, using that a positive map with commutative domain is automatically completely positive.  Part (2) follows from (1) and a theorem of Florin Boca \cite{boca}, which implies that the individual ucp maps $\Phi^x:\bb C^n\to \cal B(\cal H)$ given by $\Phi^x(e^x_a):=A^x_a$ can be jointly extended to a single ucp map $\Phi:\Asterisk_{x=1}^k \bb C^n\to \cal B(\cal H)$.
\end{proof}

We now bring group \cstar-algebras into the picture, getting us closer to the QWEP problem.  We first note that $\bb C^n\cong C^*(\bb Z_n)$, where $\bb Z_n$ denotes the additive group of integers modulo $n$.  Indeed, let $u$ be a generator of $\bb Z_n$ and consider the element $z:=\sum_{a=1}^n \exp(\frac{2\pi ia}{n})e_a\in \bb C^n$.  It is readily verified that $z$ is an element of $\cal U(\bb C^n)$ of order $n$, whence the assignment $u\mapsto z$ yields a unitary representation $\bb Z_n\to \cal U(\bb C^n)$, extending to a $*$-homomorphism $C^*(\bb Z_n)\to \bb C^n$ that can be checked to be an isomorphism.  (This identification usually goes under the name \emph{discrete Fourier transform}.)

Let $\bb F(k,n):=\Asterisk_{x=1}^k \bb Z_n$ denote the group freely generated by $k$ elements of order $n$.  We then have
$$C^*(\bb F(k,n))=C^*(\Asterisk_{x=1}^k \bb Z_n)\cong\Asterisk_{x=1}^k C^*(\bb Z_n)\cong\Asterisk_{x=1}^k \bb C^n.$$  We abuse notation slightly and let $e^x_a$ denote the element of $C^*(\bb F(k,n))$ corresponding to $e^x_a\in \Asterisk_{x=1}^k \bb C^n$.  (Another viewpoint is that $(e^x_a)_{a=1}^n$ denote the spectral projections corresponding to the $x^{\text{th}}$-unitary element of $C^*(\bb F(k,n))$.)

Here is the main result connecting the QWEP problem and Tsirelson's problem:
\begin{thm}\label{strategychar}
Fix $k,n\geq 2$ and a strategy $p\in [0,1]^{k^2n^2}$.  We then have:
\begin{enumerate}
    \item $p\in C_{qa}(k,n)$ if and only if there is a state $\phi$ on $C^*(\bb F(n,k))\otimes_{\min} C^*(\bb F(n,k))$ for which $p(a,b|x,y)=\phi(e^x_a\otimes e^y_b)$.
    \item $p\in C_{qc}(k,n)$ if and only if there is a state $\phi$ on $C^*(\bb F(n,k))\otimes_{\max} C^*(\bb F(n,k))$ for which $p(a,b|x,y)=\phi(e^x_a\otimes e^y_b)$.
\end{enumerate}
\end{thm}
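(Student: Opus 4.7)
The plan is to use Proposition~\ref{Fritz1} to translate each pair of POVM families into a pair of ucp maps out of $\cal A := C^*(\bb F(n,k))$, and then to realize the strategy $p$ as the pull-back of a state on an appropriate tensor product of $\cal A$ with itself. The two key ingredients, both noted in Subsection~\ref{sec3:problem}, are: (i) two ucp maps $\Phi_A,\Phi_B:\cal A\to \cal B(\cal H)$ with commuting ranges induce a ucp map $\Phi_A\otimes\Phi_B:\cal A\otimes_{\max}\cal A\to \cal B(\cal H)$ sending $a\otimes b$ to $\Phi_A(a)\Phi_B(b)$, and (ii) any pair of ucp maps $\Phi_A:\cal A\to\cal B(\cal H_A)$, $\Phi_B:\cal A\to\cal B(\cal H_B)$ assembles into a ucp map $\Phi_A\otimes\Phi_B:\cal A\otimes_{\min}\cal A\to \cal B(\cal H_A\otimes\cal H_B)$.

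For part~(2), the forward direction is straightforward: a quantum commuting strategy $p(a,b|x,y)=\langle A^x_aB^y_b\xi,\xi\rangle$ with commuting POVMs on $\cal H$ produces, via Proposition~\ref{Fritz1}, ucp maps $\Phi_A,\Phi_B:\cal A\to\cal B(\cal H)$ with commuting ranges; by~(i) they combine into a ucp map on $\cal A\otimes_{\max}\cal A$, and composing with the vector state at $\xi$ yields $\phi$. For the converse, apply the GNS construction to obtain a representation $\pi:\cal A\otimes_{\max}\cal A\to \cal B(\cal K)$ with cyclic vector $\eta$ implementing $\phi$. The restrictions $a\mapsto\pi(a\otimes 1)$ and $b\mapsto\pi(1\otimes b)$ are $*$-homomorphisms $\cal A\to\cal B(\cal K)$ with automatically commuting ranges; Proposition~\ref{Fritz1} (applied in reverse) extracts from them commuting POVM families, which together with $\eta$ realize $p$ as a quantum commuting strategy.

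For part~(1), the forward direction runs in parallel: a spatial strategy $p(a,b|x,y)=\langle(A^x_a\otimes B^y_b)\psi,\psi\rangle$ on $\cal H_A\otimes\cal H_B$ gives ucp maps $\Phi_A:\cal A\to\cal B(\cal H_A)$ and $\Phi_B:\cal A\to\cal B(\cal H_B)$; by~(ii) these combine into a ucp map on $\cal A\otimes_{\min}\cal A$, and the vector state at $\psi$ supplies $\phi$. Because the set of strategies of the form $\phi(e^x_a\otimes e^y_b)$ for some state $\phi$ is the continuous image of the weak*-compact state space, it is closed in $[0,1]^{k^2n^2}$; since $C_{qa}(k,n)$ is by definition the closure of $C_q(k,n)$ this covers all of $C_{qa}$.

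The one place that requires genuine work is the converse direction of (1), and this is the main obstacle. Given a state $\phi$ on $\cal A\otimes_{\min}\cal A$, fix faithful representations $\pi_A,\pi_B:\cal A\to\cal B(\cal H_A),\cal B(\cal H_B)$, so that $\pi_A\otimes\pi_B$ embeds $\cal A\otimes_{\min}\cal A$ isometrically into $\cal B(\cal H_A\otimes\cal H_B)$; Hahn--Banach extends $\phi$ to a state $\tilde\phi$ on $\cal B(\cal H_A\otimes\cal H_B)$. Since the normal states on $\cal B(\cal H_A\otimes\cal H_B)$ are weak*-dense in its state space, and each normal state is a convex combination of vector states $\omega_\xi$, and since convex combinations of $C_{qs}$ strategies are again in $C_{qs}$ (a direct-sum construction $\cal H_A\oplus\cal H_A'$, $\cal H_B\oplus\cal H_B'$ on both sides), the strategy induced by $\phi$ lies in the weak*-closure of $C_{qs}(k,n)$, which is precisely $C_{qa}(k,n)$. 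This density-plus-convexity maneuver is the technical core of the proof; everything else is a direct unpacking of Proposition~\ref{Fritz1} together with the universal properties defining the two tensor products.
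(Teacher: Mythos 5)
Your proof is correct and follows essentially the same route as the paper: reduce the forward directions to $C_{qs}$ (resp.\ $C_{qc}$) via Proposition~\ref{Fritz1} and the universal ucp-tensoring facts for $\otimes_{\min}$ (resp.\ $\otimes_{\max}$), then for the converse of (1) extend the state and approximate by convex combinations of vector states using convexity and closedness of $C_{qa}$, and for the converse of (2) use the GNS representation and read off commuting measurement families from the two tensor legs. The only cosmetic differences are that you spell out the direct-sum argument for convexity of $C_{qs}$ and the closedness of the set of correlations induced by states (both of which the paper uses implicitly), and you phrase the converse of (2) as ``Proposition~\ref{Fritz1} in reverse'' rather than defining the PVMs directly.
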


\begin{proof}
For the forward direction of (1), we may assume, without loss of generality, that $p\in C_{qs}(k,n)$, say $p(a,b|x,y)=\langle (A^x_a\otimes B^y_b)\xi,\xi\rangle$, where the POVMs $A^x$ and $B^y$ act on the Hilbert spaces $\cal H_A$ and $\cal H_B$ respectively.  By Proposition \ref{Fritz1} and the above identification $C^*(\bb F(k,n))\cong \Asterisk_{x=1}^k \bb C^n$, we have ucp maps $\Phi_A:C^*(\bb F(k,n))\to \cal B(\cal H_A)$ and $\Phi_B:C^*(\bb F(k,n))\to \cal B(\cal H_B)$ corresponding to these POVMs.  These two ucp maps combine to yield a ucp map $\Phi=\Phi_A\otimes \Phi_B:C^*(\bb F(k,n))\otimes_{\min}C^*(\bb F(k,n))\to \cal B(\cal H_A\otimes \cal H_B)$.  Consequently, we can define a state $\phi$ on $C^*(\bb F(k,n))\otimes_{\min}C^*(\bb F(k,n))$ by setting $\phi(w\otimes z):=\langle (\Phi(w\otimes z)\xi,\xi\rangle$.  It is clear that this state $\phi$ ``implements'' $p$ as in the statement of (1).

Conversely, suppose that $\phi$ is a state on $C^*(\bb F(n,k))\otimes_{\min}C^*(\bb F(n,k))$ for which $p(a,b|x,y)=\phi(e^x_a\otimes e^y_b)$.  Concretely represent $C^*(\bb F(k,n))\subseteq \cal B(\cal H)$ so that $C^*(\bb F(k,n))\otimes_{\min} C^*(\bb F(k,n))\subseteq \cal B(\cal H\otimes \cal H)$.  Extend $\phi$ to a state on $\cal B(\cal H\otimes \cal H)$, which we will continue to denote $\phi$.  Since convex combination of vector states are dense in the state space of $\cal B(\cal H\otimes \cal H)$, given $\epsilon>0$ there are vectors $\xi_1,\ldots,\xi_n\in \cal H\otimes \cal H$ for which $|\phi(e^x_a\otimes e^y_b)-\sum_{i=1}^n \langle (e^x_a\otimes e^y_b)\xi_i,\xi_i\rangle|<\epsilon$ for all $x,y\in [k]$ and $a,b\in [n]$.  This shows that $p$ can be approximated by convex combinations of elements of $C_{qs}(k,n)$.  Since $C_{qa}(k,n)$ is closed and convex, we have that $p\in C_{qa}(k,n)$.

The proof of the forward direction of (2) is identical to the proof of the forward direction of (1), using the fact that one can combine ucp maps with commuting ranges into a ucp map on the maximal tensor product.  For the converse, suppose that $\phi$ is a state on $C^*(\bb F(n,k))\otimes_{\max}C^*(\bb F(n,k))$ for which $p(a,b|x,y)=\phi(e^x_a\otimes e^y_b)$.  Let $\pi_\phi:C^*(\bb F(n,k))\otimes_{\max}C^*(\bb F(n,k))\to \cal B(\cal H)$ be the GNS representation corresponding to the state $\phi$ with cyclic vector $\xi$.  Set $A_a^x:=\pi_\phi(e^x_a\otimes I)$ and $B^y_b:=\pi_\phi(I\otimes e^y_b)$.  It is clear that $A^x_a$ and $B^y_b$ commute for all $x,y,a,b$ and that $p(a,b|x,y)=\langle A^x_aB^y_b \xi,\xi\rangle$, whence $p\in C_{qc}(k,n)$. 
\end{proof}

We note that the proof above fulfills a few promises made earlier, namely that elements of $C_{qc}(k,n)$ can always be taken to arise from PVMs (instead of just POVMs) and that $C_{qc}(k,n)$ is closed and convex (being the continuous image of the compact convex set of states on $C^*(\bb F(k,n))\otimes_{\max}C^*(\bb F(k,n))$.

Given a nonlocal game $\frak G=(\pi,D)$ with $k$ questions and $n$ answers, set $$\eta_\frak G:=\sum_{x,y\in [k]}\pi(x,y)\sum_{a,b\in [n]}D(x,y,a,b)(e^x_a\otimes e^y_b)\in C^*(\bb F(k,n))\odot C^*(\bb F(k,n)).$$

\begin{cor}\label{mainfritzcor}
For any nonlocal game $\frak G$, we have $\val^*(\frak G)=\|\eta_{\frak G}\|_{\min}$ and $\val^{co}(\frak G)=\|\eta_{\frak G}\|_{\max}$.
\end{cor}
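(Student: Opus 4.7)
The plan is to deduce both equalities immediately from Theorem \ref{strategychar} together with the Hahn-Banach consequence that $\|\eta\| = \sup_{\phi \in \frak S(\A)} \phi(\eta)$ for a positive element $\eta$ of a \cstar-algebra $\A$. First I would observe that $\eta_{\frak G}$ is a positive element of both $C^*(\bb F(k,n)) \otimes_{\min} C^*(\bb F(k,n))$ and $C^*(\bb F(k,n)) \otimes_{\max} C^*(\bb F(k,n))$. Indeed, under the identification $C^*(\bb F(k,n)) \cong \Asterisk_{x=1}^k \bb C^n$, each $e^x_a$ is a projection (the standard basis vector of $\bb C^n$ is an idempotent self-adjoint element), and so each elementary tensor $e^x_a \otimes e^y_b$ is a projection in either tensor product. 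Since $\pi(x,y) \geq 0$ and $D(x,y,a,b) \in \{0,1\}$, $\eta_{\frak G}$ is a nonnegative linear combination of projections, hence positive.

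Next I would invoke the formula $\|\eta_{\frak G}\|_{\min} = \sup_{\phi}\phi(\eta_{\frak G})$, where $\phi$ ranges over states on $C^*(\bb F(k,n)) \otimes_{\min} C^*(\bb F(k,n))$. For any such state $\phi$, Theorem \ref{strategychar}(1) associates a strategy $p_\phi \in C_{qa}(k,n)$ satisfying $p_\phi(a,b|x,y) = \phi(e^x_a \otimes e^y_b)$, and by linearity
\[
\phi(\eta_{\frak G}) = \sum_{x,y} \pi(x,y) \sum_{a,b} D(x,y,a,b)\, p_\phi(a,b|x,y) = \val(\frak G, p_\phi).
\]
Conversely, Theorem \ref{strategychar}(1) says every $p \in C_{qa}(k,n)$ arises in this way. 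Taking the sup gives $\|\eta_{\frak G}\|_{\min} = \sup_{p \in C_{qa}(k,n)} \val(\frak G, p)$. Finally, since $C_{qa}(k,n) = \overline{C_q(k,n)}$ and the functional $p \mapsto \val(\frak G, p)$ is a continuous (in fact polynomial) function on $[0,1]^{k^2n^2}$, this supremum equals $\sup_{p \in C_q(k,n)} \val(\frak G, p) = \val^*(\frak G)$.

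For the second equality, the argument is the verbatim analogue using Theorem \ref{strategychar}(2) instead of (1): states on $C^*(\bb F(k,n)) \otimes_{\max} C^*(\bb F(k,n))$ correspond bijectively to elements of $C_{qc}(k,n)$ via the same formula, so
\[
\|\eta_{\frak G}\|_{\max} = \sup_{p \in C_{qc}(k,n)} \val(\frak G, p) = \val^{co}(\frak G)
\]
by the definition of $\val^{co}$. There is no serious obstacle here; the only mild point to verify is the positivity of $\eta_{\frak G}$ so that the sup-over-states formula applies directly (without having to pass to $\eta_{\frak G}^*\eta_{\frak G}$), but this is immediate from the projection observation above.
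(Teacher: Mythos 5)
Your proof is correct and is exactly the argument the paper intends: the corollary is stated immediately after Theorem \ref{strategychar} with no further proof, and your derivation—positivity of $\eta_{\frak G}$, the Hahn–Banach supremum formula $\|\eta\|=\sup_\phi\phi(\eta)$, the state/strategy correspondence from Theorem \ref{strategychar}, and the density of $C_q(k,n)$ in $C_{qa}(k,n)$—is precisely how those details should be filled in.
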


We remind the reader that the previous corollary is responsible for the negative solution to Tsirelson's problem from $\mip^*=\operatorname{RE}$.  Indeed, $\|\eta_{\frak G}\|_{\max}$ corresponds to the operator norm of $\eta_{\frak G}$ when viewed as an element of $C^*(\bb F(k,n)\times \bb F(k,n))$; by \cite[Corollary 2.2]{FNT}, one can effectively compute upper bounds of the operator norm of elements of $C^*(\bb F(k,n)\times \bb F(k,n))$, whence one can effectively compute upper bounds for $\val^{co}(\frak G)$.

\begin{cor}
For any $k,n\geq 2$, if $(C^*(\bb F(k,n)),C^*(\bb F(k,n))$ is a nuclear pair, then Tsirelson's problem has a positive solution for scenarios of dimension $(k,n)$, that is, $\val^*(\frak G)=\val^{co}(\frak G)$ for all nonlocal games $\frak G$ with $k$ questions and $n$ answers.
\end{cor}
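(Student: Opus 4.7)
The plan is to derive this corollary as an essentially immediate consequence of the preceding Corollary \ref{mainfritzcor} together with the definition of a nuclear pair. The key observation is that the two values $\val^*(\frak G)$ and $\val^{co}(\frak G)$ have already been identified with two different \cstar-norms of the \emph{same} algebraic tensor element $\eta_\frak G \in C^*(\bb F(k,n)) \odot C^*(\bb F(k,n))$, and the hypothesis says precisely that these two norms agree.

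Concretely, fix an arbitrary nonlocal game $\frak G$ with $k$ questions and $n$ answers, and consider the associated element
$$\eta_\frak G = \sum_{x,y \in [k]} \pi(x,y) \sum_{a,b \in [n]} D(x,y,a,b)\,(e^x_a \otimes e^y_b) \in C^*(\bb F(k,n)) \odot C^*(\bb F(k,n)).$$
Apply Corollary \ref{mainfritzcor} to obtain the two identifications $\val^*(\frak G) = \|\eta_\frak G\|_{\min}$ and $\val^{co}(\frak G) = \|\eta_\frak G\|_{\max}$. Now invoke the hypothesis that $(C^*(\bb F(k,n)), C^*(\bb F(k,n)))$ is a nuclear pair, which by definition means that the minimal and maximal \cstar-norms on the algebraic tensor product $C^*(\bb F(k,n)) \odot C^*(\bb F(k,n))$ coincide. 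In particular, $\|\eta_\frak G\|_{\min} = \|\eta_\frak G\|_{\max}$, so $\val^*(\frak G) = \val^{co}(\frak G)$. Since $\frak G$ was arbitrary, the corollary follows.

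There is no substantive obstacle to this argument; all of the work has been absorbed into Corollary \ref{mainfritzcor} (itself a consequence of Theorem \ref{strategychar}), which translates the quantum-information-theoretic quantities on the left into honest \cstar-tensor norms on the right. The only mild subtlety worth remarking on is that $\eta_\frak G$ is a priori only an element of the algebraic tensor product, so one should be careful to state that it is this algebraic element whose two \cstar-norms are being compared — but this is exactly the setting in which ``nuclear pair'' is defined, so no further argument is needed.
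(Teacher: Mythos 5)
Your argument is correct and is exactly what the paper intends: the corollary is stated without proof precisely because, once Corollary~\ref{mainfritzcor} identifies $\val^*(\frak G)=\|\eta_{\frak G}\|_{\min}$ and $\val^{co}(\frak G)=\|\eta_{\frak G}\|_{\max}$ for the single algebraic element $\eta_{\frak G}\in C^*(\bb F(k,n))\odot C^*(\bb F(k,n))$, the nuclear-pair hypothesis (equality of the min and max \cstar-norms on the algebraic tensor product) gives the conclusion immediately. Your remark on where $\eta_{\frak G}$ lives is a sensible clarification but does not change the argument.
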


The quotient map $\bb Z\to \bb Z_n$ yields a quotient map $\bb F_k\to \bb F(k,n)$, leading to a surjective $*$-homomorphism $C^*(\bb F_k)\to C^*(\bb F(k,n))$.  One can show that this map has a ucp lift $C^*(\bb F(k,n))\to C^*(\bb F_k)$ (see, for example, \cite[Lemma D.3]{Fr}), whence $(C^*(\bb F(k,n)),C^*(\bb F(k,n))$ is a nuclear pair if $(C^*(\bb F_k),C^*(\bb F_k))$ is a nuclear pair.  (It can be shown that $\bb F(k,n)$ contains a copy of $\bb F_2$ if $(k,n)\not=(2,2)$, whence the converse to the previous sentence also holds in this case.)  Consequently, we have:

\begin{cor}
If Kirchberg's QWEP problem has a positive answer, then Tsirelson's problem has a positive answer.
\end{cor}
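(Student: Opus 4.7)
The plan is to chain together the three results immediately preceding the statement, using the Kirchberg QWEP problem reformulation given in Subsection \ref{sec3:kirchberg's}. Recall that a positive answer to the QWEP problem is equivalent to the statement that, for some (equivalently, any) $k \in \{2,3,\ldots\}\cup\{\infty\}$, the pair $(C^*(\mathbb{F}_k), C^*(\mathbb{F}_k))$ is nuclear. So my starting point is to assume this nuclearity for all such $k$, since the equivalence is already in hand.

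Next, I would fix an arbitrary scenario dimension $(k,n)$ with $k,n \geq 2$, and leverage the fact noted just before the statement: the quotient $\mathbb{Z} \twoheadrightarrow \mathbb{Z}_n$ induces a quotient $\mathbb{F}_k \twoheadrightarrow \mathbb{F}(k,n)$, and the resulting surjection $C^*(\mathbb{F}_k) \twoheadrightarrow C^*(\mathbb{F}(k,n))$ admits a ucp lift. Combined with the general fact (recorded at the end of Subsection \ref{sec3:problem}) that the existence of such a ucp lift propagates nuclearity of the self-pair along a surjective group morphism, I deduce that $(C^*(\mathbb{F}(k,n)), C^*(\mathbb{F}(k,n)))$ is a nuclear pair.

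At this point I apply the penultimate corollary, which asserts that nuclearity of $(C^*(\mathbb{F}(k,n)), C^*(\mathbb{F}(k,n)))$ implies $\mathrm{val}^*(\mathfrak{G}) = \mathrm{val}^{co}(\mathfrak{G})$ for every nonlocal game $\mathfrak{G}$ with $k$ questions and $n$ answers. Since $(k,n)$ was arbitrary, this equality of values holds for \emph{all} nonlocal games, which is precisely (the game-theoretic reformulation of) an affirmative answer to Tsirelson's problem.

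There is no real obstacle here — the work has already been done in the supporting propositions and corollaries, and this argument is essentially a one-line concatenation. The only thing to be mildly careful about is citing the correct equivalent form of the QWEP problem (the one about $(C^*(\mathbb{F}_k), C^*(\mathbb{F}_k))$ being a nuclear pair) rather than, say, the formulation in terms of $C^*(\mathbb{F}_\infty)$ having WEP, since it is the former that matches the hypothesis of the preceding corollary after the ucp-lift step.
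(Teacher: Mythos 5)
Your proposal is correct and follows the paper's own argument exactly: reduce to the nuclearity of $(C^*(\mathbb F_k), C^*(\mathbb F_k))$ via the QWEP equivalence, propagate this to $(C^*(\mathbb F(k,n)), C^*(\mathbb F(k,n)))$ using the ucp lift along the surjection induced by $\mathbb F_k \twoheadrightarrow \mathbb F(k,n)$, and then invoke the penultimate corollary to conclude $\val^*(\mathfrak G) = \val^{co}(\mathfrak G)$ for all games. The paper treats the corollary as an immediate consequence of the discussion preceding it, which is precisely what you have spelled out.
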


In the last subsection, we saw that $\mip^*=\operatorname{RE}$ implies that Tsirelson's problem has a negative solution; combined with the previous corollary, we now have that the QWEP problem has a negative solution, and thus, coupled with the discussion in Subsection \ref{sec3:from} above, we finally have the desired negative solution to the CEP!

\section{From MIP*=RE to the failure of CEP:  a model-theoretic shortcut}\label{sec7}

In this section, we show how ideas from logic yield a (in this author's opinion) more elementary derivation of a negative solution of CEP from $\mip^*=\operatorname{RE}$.  Much of the material presented in this section represents joint work of the author and Bradd Hart \cite{GH} and \cite{GH2}.

\subsection{A continuous logic for studying tracial von Neumann algebras}\label{sec7:continuous}

The negative solution to CEP from $\mip^*=\operatorname{RE}$ presented in this section uses techniques from logic.  Consequently, we need to describe an appropriate first-order language in a certain \emph{continuous logic} for studying tracial von Neumann algebras.  (We apologize for the double use of the word ``language'' in this paper.  The complexity-theoretic languages have been denoted using bold letters $\bf L$; we will use Roman letters $L$ for languages in the sense of logic.)

For a von Neumann algebra $\cal M$, we let $\cal M_1$ denote the operator norm unit ball.  Recall that by a ${}^*$-polynomial $p(x_1,\ldots,x_n)$ in the indeterminates $x_1,\ldots,x_n$ we mean an expression built from the indeterminates using the ${}^*$-algebra operations.  Let $\mathcal F$ denote the set of all ${}^*$-polynomials $p(x_1,\ldots,x_n)$ ($n\geq 0$) such that, for \emph{any} von Neumann algebra $\cal M$, we have $p(\cal M_1^n)\subseteq \cal M_1$.  For example, the following functions belong to $\mathcal F$:
\begin{itemize}
\item the ``constant symbols'' $0$ and $1$ (thought of as $0$-ary functions);
\item $x\mapsto x^*$;
\item $x\mapsto \lambda x$ ($|\lambda|\leq 1$)
\item $(x,y)\mapsto xy$
\item $(x,y)\mapsto \frac{x+y}{2}$.
\end{itemize}

We then work in the formal \emph{language} $L_{vNa}:=\mathcal F\cup \{\tr_\R,\tr_\Im,d\}$, where $\tr_\Re$ (resp. $\tr_\Im$) denote the real (resp. imaginary) parts of the trace and $d$ denotes the metric on the operator norm unit ball given by $d(x,y):=\|x-y\|_{\tau}$.  We can then formulate certain properties of tracial von Neumann algebras using the language $L_{vNa}$ as follows.

\emph{Basic $L_{vNa}$-formulae} will be formulae of the form $\tr_\Re(p(\vec x))$ or $\tr_\Im(p(\vec x))$ for $p\in \mathcal F$.  \emph{Quantifier-free $L_{vNa}$-formulae} are formulae of the form $f(\varphi_1,\ldots,\varphi_m)$, where $f:\r^m\to \r$ is a continuous function and $\varphi_1,\ldots,\varphi_m$ are basic $L_{vNa}$-formulae.  Finally, an arbitrary $L_{vNa}$-formula is of the form 
$$Q^1_{x_{i_1}}\cdots Q^k_{x_{i_k}}\varphi(x_1,\ldots,x_n),$$ where each $i_j\in \{1,\ldots,n\}$, $\varphi(x_1,\ldots,x_n)$ is a quantifier-free $L_{vNa}$-formula, and each $Q^i$ is either $\sup$ or $\inf$; we think of these $Q_i$'s as \emph{quantifiers over the unit ball of the algebra}.  
%If each $Q^i$ is $\sup$ (resp. $\inf$), then we say that the formula is \emph{universal} (resp. \emph{existential}).

For those keeping score at home, our setup here is a bit more specialized than the general treatment of continuous logic in \cite{mtfms} (or even the version \cite{mtoa2} presented for operator algebraists), but a dense set of the formulae in \cite{mtfms} are logically equivalent to formulae in the above form, so there is no loss of generality in our treatment here.

Also, in order to keep the set of formulae ``separable'' and ``computable'', when forming the set of quantifier-free formulae, we really should restrict ourselves to a computable dense subset of the set of all continuous functions $\r^m\to \r$ as $m$ ranges over $\n$. (See \cite[Section 2]{GH}.)

Suppose that $\varphi(\vec x)$ is a formula, $\cal M$ is a tracial von Neumann algebra, and $\vec a\in \cal M_1^n$, where $n$ is the length of the tuple $\vec x$.  We let $\varphi(\vec a)^{\cal M}$ denote the real number obtained by replacing the variables $\vec x$ with the tuple $\vec a$; we may think of $\varphi(\vec a)^{\cal M}$ as the truth value of $\varphi(\vec x)$ in $\cal M$ when $\vec x$ is replaced by $\vec a$.  For example, if $\varphi(x_1)$ is the formula $\sup_{x_2}d(x_1x_2,x_2x_1)$, then $\varphi(a)^{\cal M}=0$ if and only if $a$ is in the center of $\cal M$.

If $\varphi$ has no \emph{free variables} (that is, all variables occurring in $\varphi$ are bounded by some quantifier), then we say that $\varphi$ is a \emph{sentence} and we observe that $\varphi^{\cal M}$ is a real number.  Given a tracial von Neumann algebra, the \emph{theory of $\cal M$} is the function $\Th(\cal M)$ which maps the sentence $\varphi$ to the real number $\varphi^{\cal M}$.  Sometimes authors define $\Th(\cal M)$ to consist of the set of sentences $\varphi$ for which $\varphi^{\cal M}=0$; since $\Th(\cal M)$, as we have defined it, is determined by its zeroset, these two formulations are equivalent.

If $\varphi(\vec x)$ is a formula, then there is a bounded interval $[m_\varphi,M_\varphi]\subseteq \r$  called the range of $\varphi$ such that, for any tracial von Neumann algebra $\cal M$ and any $\vec a\in \cal M_1$, we have $\varphi(\vec a)^{\cal M}\in [m_\varphi,M_\varphi]$. 

At this point we need to mention an important if not seemingly pedantic point (to a nonlogician).  We have been focusing our attention on those structures in the language $L_{vNa}$ that actually correspond to (unit balls of) tracial von Neumann algebras.  This is a perfectly legitimate thing to do because the class of tracial von Neumann algebras form an \emph{elementary class}.  Perhaps a simpler example from classical logic will help illustrate the point.  Let $L_{grp}=\{\cdot,e\}$ consist of a single binary function symbol $\cdot$ and constant symbol $e$.  Of course, the intended $L_{grp}$-structures are the ones that interpret these symbols as the multiplication and identity of a group.  However, there are perfectly reasonable, if not silly, $L_{grp}$-structures, such as, for example, one that interprets $\cdot$ as a constant function.  The key point is that we can write down a collection of axioms, that is, a set of $L_{grp}$-sentences $T_{grp}$, that single out the class of groups in the sense that an $L_{grp}$-structure $G$ is a group if and only if every sentence in $T_{grp}$ is true in $G$.  This is the definition of what it means for the class of groups to be an elementary class in the language $L_{grp}$.

A similar situation is true in our context, namely, there is a collection $T_{vNa}$ of $L_{vNa}$-sentences such that an $L_{vNa}$-structure is the unit ball of a von Neumann algebra if and only if each sentence in $T_{vNa}$ evaluates to $0$ in the structure.  In fact, one can add to these axioms a couple of extra sentences in order to obtain the theory $T_{II_1}$ whose models are all (unit balls of) II$_1$ factors.

% We should also mention that, as a consequence of \emph{\L os' theorem} (and the fact that the tracial ultraproduct construction is the continuous logic ultraproduct construction), we have $\Th(\cal M)=\Th(\cal M^\omega)$ for any ultrafilter $\u$; for more details, see \cite{FHS2}.

\subsection{The model-theoretic reformulation of CEP}\label{sec7:model}

An $L_{vNa}$-sentence $\sigma$ of the form $$\sup_{x_1}\cdots \sup_{x_n} \varphi(x_1,\ldots,x_n)$$ is called \emph{universal} if $\varphi$ is quantifier-free and the range of $\varphi$ is non-negative and similarly \emph{existential} if all the quantifiers are $\inf$.  This terminology is justified if one thinks of the value 0 as ``true'' for then $\sigma^{\cal M}=0$ if and only if $\varphi(a_1,\ldots,a_n)=0$ for all $a_1,\ldots,a_n\in \cal M_1$. If we restrict the function $\Th(\cal M)$ to the set of all universal (resp. existential) sentences, the resulting function is defined to be the \emph{universal} (resp. \emph{existential}) theory of $\cal M$, denoted $\Th_\forall(\cal M)$ (resp. $\Th_\exists(\cal M)$).

It is fairly easy to see that $\Th_\forall(\cal M)=\Th_\forall(\cal M^\u)$ for any tracial von Neumann algebra $\cal M$ and any ultrafilter $\u$.  (The \emph{\L os theorem} \cite[Proposition 4.3]{mtoa2} shows that $\Th(\cal M)=\Th(\cal M^\u)$, but we will not need this more general fact.)  Consequently, if $\cal N$ embeds into $\cal M^\u$, then we have that $\Th_\forall(\cal N)\leq \Th_\forall(\cal M)$ (as a function).  It turns out that the converse is also true.  To see this, assume that $\Th_\forall(\cal N)\leq \Th_\forall(\cal M)$.  To show that $\cal N$ embeds into an ultrapower of $\cal M$, it suffices (by standard ultrapower arguments) to show, given any finitely many $a_1,\ldots,a_n\in \cal N_1$, any atomic formula $\varphi(x_1,\ldots,x_n)$, and any $\epsilon>0$, that there are $b_1,\ldots,b_n\in \cal M_1$ such that $|\varphi(\vec a)^{\cal N}-\varphi(\vec b)^{\cal M}|<\epsilon$.  Set $r:=\varphi(\vec a)^{\cal N}$ and $\sigma:=\inf_{\vec x}|r-\varphi(\vec x)|$.  It is clear that $\sigma^{\cal N}=0$.  The assumption that $\Th_\forall(\cal N)\leq \Th_\forall(\cal M)$ implies that $\Th_\exists(\cal M)\leq \Th_\exists(\cal N)$, whence $\sigma^{\cal M}=0$, which easily implies the existence of the desired tuple $\vec b\in \cal M_1$.  (None of this is particular to the case of tracial von Neumann algebras and holds for any pair of structures in the same language.)  

% A basic model-theoretic fact (see, for example, the proof of \cite[Proposition 2.4.4(1)]{munster}) shows that, conversely, if $\Th_\forall(\cal N)\leq \Th_\forall(\cal M)$, then $\cal N$ embeds into an ultrapower of $\cal M$.  To be fair, the basic model theoretic fact shows that if $\Th_\forall(\cal N)\leq \Th_\forall(\cal M)$, then $\cal N$ embeds into some \emph{elementary extension} of $\cal M$, which itself can be embedded into an ultrapower of $\cal M$.  If $\cal N$ is furthermore assumed to be separable, it is fairly straightforward to show that $\cal N$ can be embedded into $\cal M^\u$ for any nonprincipal ultrafilter $\u$ on $\bb N$.  (None of this is particular to the case of tracial von Neumann algebras and holds for any pair of structures in the same language.)

We can thus reformulate the CEP as follows:  for every tracial von Neumann algebra $\cal M$, we have that $\Th_\forall(\cal M)\leq \Th_\forall(\R)$.  Recalling that $\R$ embeds into every II$_1$ factor, we can further reformulate the CEP:  there is a unique universal theory of II$_1$ factors, namely $\Th_\forall(\R)$.

\subsection{The Completeness Theorem for (continuous) first-order logic}\label{sec7:completeness}

Before discussing the Completeness theorem for II$_1$ factors in the context of continuous logic, we consider the simpler example of groups in classical logic.

Consider the following theorem in group theory:  every group has a unique identity element.  This theorem can be written as an $L_{grp}$-sentence $\sigma_e$ defined by $\forall x(\forall y(x\cdot y=y\cdot x=x)\to x=e)$.  What exactly does it mean for $\sigma_e$ to be a theorem of group theory? 

Syntactically, what this means is that in some formal proof system for first-order logic, there is a formal proof of $\sigma_e$ from $T_{grp}$, denoted $T_{grp}\vdash \sigma_e$.  This formal proof is simply a finite list of sentences, each of which is either an element of $T_{grp}$ or can be obtained from earlier elements of the list using the rules of the proof system, with $\sigma_e$ being the last element of the list.

Semantically, we might say that in every model of $T_{grp}$, that is, in every group, the sentence $\sigma_e$ is true; we denote this relationship by $T_{grp}\models \sigma_e$.

For any reasonable proof system, it is fairly easy to prove that $\vdash$ implies $\models$; this is called the \emph{Soundness theorem for first order logic}.  A much less obvious result is that the converse also holds, namely that any time $T_{grp}\models \sigma$, then in fact $T_{grp}\vdash \sigma$.  (There is obviously nothing special here about $T_{grp}$ and this works for any classical first-order theory.)  This is called the \emph{Completeness theorem for first-order logic} and is due to Einstein's pal Kurt G\"odel.  (See \cite[Section 2.5]{enderton} for a nice treatment.)

The relevance of the Completeness theorem is that if we use some effective coding of the symbols of $L_{grp}$ and the logical symbols, then we can start a computer program  running all proofs from $T_{grp}$ and outputting all theorems of $T_{grp}$.  In other words, the language (in the sense of complexity theory) consisting of codes for theorems of group theory belongs to RE.  Note that all that was used about $T_{grp}$ is that the set of codes for axioms in $T_{grp}$ itself belongs to RE.

There are corresponding Soundness and Completeness theorems for continuous logic due to Ben-Yaacov and Pedersen \cite{BYP}.  Due to the approximate nature of continuous logic, the Completeness theorem takes a slightly different form.  Restricted to our case of interest, namely $T_{II_1}$, it reads:  for every $L_{vNa}$-sentence $\sigma$, we have
$$\sup\{\sigma^{\cal M}\ : \ \cal M \text{ a II}_1\text{ factor}\}=\inf\{r\in \bb Q^{>0} \ : \ T_{II_1}\vdash \sigma\dminus r\}.$$  Here, $\dminus$ is the function given by $r\dminus s:=\max(r-s,0)$.  Consequently, the Completeness theorem tells us that the largest truth value that $\sigma$ could take in a II$_1$ factor is the smallest upper bound for $\sigma$ that you could prove from the axioms $T_{II_1}$.  

The proof system for continuous logic is still of the form that you can effectively enumerate theorems from an effectively enumerated set of axioms.  In particular, since the set of axioms for $T_{II_1}$ given in \cite{mtoa2} is easily checked to be effectively enumerated, we see that the set of theorems of $T_{II_1}$ belongs to RE.

\subsection{CEP and the computability of the universal theory of $\R$}\label{sec7:CEP}

Given a tracial von Neumann algebra $\cal M$, we say that the universal theory of $\cal M$ is \emph{computable} if there is an algorithm such that, upon input an $L_{vNa}$-sentence $\sigma$ and a rational $\epsilon>0$, returns $a,b\in \bb Q^{>0}$ with $a<b$ and $b-a<\epsilon$ and for which $\sigma^{\cal M}\in (a,b)$.

The ideas in the previous subsection allow us to prove the following:

\begin{thm}[G. and Hart \cite{GH}]
If CEP has a positive answer, then the universal theory of $\cal R$ is computable.
\end{thm}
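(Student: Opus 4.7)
The plan is to sandwich $\sigma^{\R}$ between two streams of rational numbers: a computable, nonincreasing sequence of upper bounds coming from the Completeness Theorem, and a computable, nondecreasing sequence of lower bounds coming from direct evaluation of $\sigma$ on matrix algebras. Under CEP both streams converge to $\sigma^{\R}$, so a parallel search for an $\epsilon$-close pair terminates and produces the desired rational interval.

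For the upper bounds, $\R$ is itself a II$_1$ factor, and under CEP every II$_1$ factor $\cal M$ embeds into $\R^\u$. Universal sentences are monotone under subalgebra inclusion, and $\Th_\forall(\R)=\Th_\forall(\R^\u)$, so $\sigma^{\cal M}\leq \sigma^{\R}$ for every II$_1$ factor $\cal M$. Hence, under CEP,
$$\sup\{\sigma^{\cal M}\ : \ \cal M \text{ a II}_1\text{ factor}\}=\sigma^{\R}.$$
The continuous Completeness Theorem identifies this supremum with $\inf\{r\in\bb Q^{>0}:T_{II_1}\vdash \sigma\dotminus r\}$. Since $T_{II_1}$ is effectively axiomatized and the continuous proof system is effectively enumerable, enumerating formal proofs yields a computable sequence of rational upper bounds $r_n\searrow \sigma^{\R}$.

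For the lower bounds, every $M_k(\bb C)$ embeds trace-preservingly into $\R$, so $\sigma^{M_k(\bb C)}\leq \sigma^{\R}$ for each $k$. The value $\sigma^{M_k(\bb C)}$ is itself effectively computable to arbitrary precision: the operator-norm unit ball of $M_k(\bb C)$ is a computable compact subset of $\bb C^{k^2}$; traces of $*$-polynomials in matrices are computable from the entries; hence $\vec b\mapsto \varphi(\vec b)^{M_k(\bb C)}$ is computable with a computable modulus of uniform continuity, and the supremum can be approximated via finite-grid search. The essential use of CEP is its Microstate Conjecture reformulation: for any $\vec a\in\R_1^n$ and any finite tuple of $*$-polynomials, there exist $k$ and $\vec b\in M_k(\bb C)_1^n$ whose joint moments match those of $\vec a$ to any prescribed accuracy; by uniform continuity of the outer continuous function assembling the quantifier-free $\varphi$ from its basic moment subformulae, this yields $|\varphi(\vec a)^{\R}-\varphi(\vec b)^{M_k(\bb C)}|$ arbitrarily small. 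Taking the sup over $\vec a$ gives $\sup_k \sigma^{M_k(\bb C)}=\sigma^{\R}$, so the computed approximations form a computable stream of rational lower bounds $s_k\nearrow \sigma^{\R}$.

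On input $(\sigma,\epsilon)$, the algorithm runs the proof search and the matrix-algebra evaluations in parallel, maintaining the best current upper bound $r$ and best current lower bound $s$; it halts the first time $r-s<\epsilon$ and outputs a rational pair bounding $\sigma^{\R}$ within $\epsilon$. Convergence of both streams guarantees termination. The only appeal to CEP is in closing the gap $\sigma^{\R}=\sup_k \sigma^{M_k(\bb C)}$; both the Completeness upper bound and the effective evaluation of $\sigma$ on each $M_k(\bb C)$ are unconditional. The main conceptual point to check is that the Microstate Conjecture approximates the full value $\varphi(\vec a)^{\R}$ and not merely the individual moments, which is immediate from the definition of a quantifier-free formula as a (computably) continuous function of basic moment formulae.
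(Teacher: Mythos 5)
Your proposal is correct and follows the same overall architecture as the paper's proof: upper bounds come from enumerating theorems $\sigma\dminus r$ of $T_{II_1}$ via the continuous Completeness Theorem, lower bounds come from direct evaluation of the quantifier-free matrix of $\sigma$ over rational points in the operator-norm unit balls of matrix algebras, and the algorithm runs both streams in parallel until they are within $\epsilon$. (The paper also mentions a second lower-bound method, applying Completeness to the existential sentence $M_\sigma\dminus\sigma$, but the matrix-evaluation method you use is the one the paper describes first.)

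One remark in your final paragraph is stated backwards, though. You claim ``the only appeal to CEP is in closing the gap $\sigma^{\R}=\sup_k\sigma^{M_k(\bb C)}$'' and that the Completeness upper bound is unconditional. In fact $\sigma^{\R}=\sup_k\sigma^{M_k(\bb C)}$ holds unconditionally: $\R$ is by definition hyperfinite, so $\bigcup_k M_{2^k}(\bb C)$ is $\|\cdot\|_\tau$-dense in $\R_1$ (Kaplansky density), and uniform continuity of quantifier-free formulae yields the convergence of matrix values to $\sigma^{\R}$ with no use of CEP whatsoever --- invoking the full Microstate Conjecture here is invoking a much stronger statement than what is needed. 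What the Completeness Theorem gives unconditionally is $\inf\{r\in\bb Q^{>0}:T_{II_1}\vdash\sigma\dminus r\}=\sup\{\sigma^{\cal M}:\cal M\text{ a II}_1\text{ factor}\}$, and it is the further identification of \emph{this} supremum with $\sigma^{\R}$ --- i.e., the statement that every II$_1$ factor embeds into $\R^\u$ --- that is precisely where CEP is used. You actually make this point correctly in your second paragraph, so the argument itself is sound; only the concluding remark mislocates the hypothesis.
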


\begin{proof}
If CEP holds, then, recalling that $\R$ embeds into any II$_1$ factor, we have, for any universal sentence $\sigma$, that $\sup\{\sigma^M \ : \ \mathcal{M} \text{ a II}_1 \text{ factor}\}=\sigma^\R$.  Consequently, if we start enumerating all proofs from $T_{II_1}$ and record all instances of theorems of the form $\sigma\dminus r$, then we know that $\sigma^\R\leq r$ and this allows us to effectively enumerate better and better upper bounds for $\sigma^\R$.

On the other hand, we can also effectively enumerate better lower bounds for $\sigma^\R$.  There are two ways that one can go about this.  One way is to write $\sigma=\sup_x \varphi(x)$, where $\varphi(x)=f(\tau(p_1(x)),\ldots,\tau(p_n(x)))$, and each $p_i$ is a $*$-polynomial and $f$ is a ``computable'' continuous function, that is, generated from a computable set of connectives.  One can then approximately calculate $\varphi$ on matrices of larger dimensions with rational coordinates.  (Technically, $x$ is restricted to range over matrices of operator norm at most one, but one can efficiently verify this too.)  Another option is to consider the existential sentence $\sigma_0:=M_\sigma\dminus \sigma$, where $M_\sigma$ is an upper bound for $\sigma$ (uniform over all II$_1$ factors) that is effectively computable from $\sigma$ itself.  Since $\R$ embeds in every II$_1$ factor, we once again have $\sup\{\sigma_0^{\cal M} \ : \ {\cal M} \text{ a II}_1 \text{ factor}\}=\sigma_0^\cal R$ (this does not use CEP), and thus we can enumerate all proofs from $T_{II_1}$ and every time we see that $\sigma_0\dminus r$, we know that $\sigma^\cal R\geq M_\sigma-r$.

We run both the upper and lower bound algorithms simultaneously and wait until they output numbers within $\epsilon$ of each other.
\end{proof}

\subsection{Synchronous strategies, definable sets, and finishing the proof}\label{sec7:synchronous}

Based on the theorem in the previous section, in order to refute CEP, it suffices to prove the following theorem:

\begin{thm}[G. and Hart \cite{GH2}]\label{GoldHart2}
The universal theory of $\R$ is not computable.
\end{thm}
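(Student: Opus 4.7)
The plan is to assume $\Th_\forall(\R)$ is computable and derive decidability of the halting problem $\mathbf{HALT}$, contradicting $\mip^*=\operatorname{RE}$. For each Turing machine $\mathbf{M}$, the $\mip^*=\operatorname{RE}$ reduction provides an effectively computable nonlocal game $\frak G_{\mathbf{M}}$ with $\val^*(\frak G_{\mathbf{M}})=1$ when $\mathbf{M}$ halts on the empty tape and $\val^*(\frak G_{\mathbf{M}})\leq \tfrac12$ otherwise. The strategy is to convert each $\frak G_{\mathbf{M}}$ into a universal $L_{vNa}$-sentence $\sigma_{\frak G_{\mathbf{M}}}$ whose value in $\R$ inherits this gap, so that approximating $\sigma_{\frak G_{\mathbf{M}}}^{\R}$ to within, say, $\tfrac14$ decides halting.

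The bridge from games to sentences uses synchronous strategies. Given a game $\frak G=(\pi,D)$ with $k$ questions and $n$ answers, a synchronous strategy in a tracial von Neumann algebra $(\cal M,\tau)$ is determined by PVMs $(P^x_a)_{a\in[n]}$ in $\cal M$ for each $x\in[k]$, yielding $p(a,b|x,y)=\tau(P^x_aP^y_b)$ and expected payoff
\begin{equation*}
F_{\frak G}(\vec X)\;:=\;\sum_{x,y\in[k]}\pi(x,y)\sum_{a,b\in[n]}D(x,y,a,b)\,\tr_\Re\bigl(X^x_aX^y_b\bigr),
\end{equation*}
a quantifier-free $L_{vNa}$-formula in indeterminates $\vec X=(X^x_a)$. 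The condition ``$\vec X$ is a PVM tuple'' is the vanishing of a quantifier-free expression $\Pi(\vec X)$ built from the $d$-metric enforcing $X^x_a=(X^x_a)^*$, $X^x_a=(X^x_a)^2$, and $\sum_a X^x_a=1$. For an effective constant $M$ uniformly bounding $F_{\frak G}$ and a sufficiently large penalty $\lambda$, the sentence
\begin{equation*}
\sigma_{\frak G}\;:=\;\sup_{\vec X}\bigl(M-F_{\frak G}(\vec X)+\lambda\Pi(\vec X)\bigr)
\end{equation*}
is universal (the integrand is nonnegative on the unit ball), and a standard functional-calculus perturbation shows that, up to arbitrarily small error controlled by $\lambda$, $M-\sigma_{\frak G}^{\R}$ equals the supremum of $\val(\frak G,p)$ over correlations $p$ arising from synchronous PVM strategies in $\R$.

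To close the reduction, one invokes the synchronous refinement of $\mip^*=\operatorname{RE}$: the games $\frak G_{\mathbf{M}}$ can be taken to be synchronous, so that in the halting case, $\val^*(\frak G_{\mathbf{M}})=1$ is witnessed in the limit by finite-dimensional synchronous PVM strategies---each of which lives in some matrix subfactor $M_n\hookrightarrow\R$ and hence contributes to the synchronous-over-$\R$ value. In the non-halting case, any synchronous PVM strategy in $\R$ yields a correlation $p(a,b|x,y)=\tau(P^x_aP^y_b)$ lying in $C_{qa}(k,n)$ (since $\R$ is a WOT-limit of matrix algebras, such correlations are $C_q$-approximable) and is therefore bounded above by $\val^*(\frak G_{\mathbf{M}})\leq\tfrac12$. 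The $\{1,\tfrac12\}$-gap in $\val^*$ thus descends to $\sigma_{\frak G_{\mathbf{M}}}^{\R}$, so computability of $\Th_\forall(\R)$ would decide $\mathbf{HALT}$, the sought contradiction.

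The main obstacle is the synchronous refinement of $\mip^*=\operatorname{RE}$: one needs the completeness case witnessed by synchronous strategies coming from finite-dimensional matrix algebras while preserving the soundness bound against \emph{all} synchronous correlations from $\R$. Once this refinement is granted, the remaining steps---packaging PVMs as a quantifier-free $d$-penalty, controlling the near-PVM perturbation, and running a threshold test on a rational approximation of $\sigma_{\frak G_{\mathbf{M}}}^{\R}$---are routine continuous-logic bookkeeping.
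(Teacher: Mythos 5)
You adopt essentially the same blueprint as the paper: use the synchronous strengthening of $\MIP^*=\operatorname{RE}$ (so that halting yields $\sval^*(\frak G_{\bf M})=1$), translate the synchronous entangled value into a supremum over PVM tuples in $\R$ via the Kim--Paulsen--Schafhauser characterization, and argue that computability of $\Th_\forall(\R)$ would then let you approximate $\sval^*(\frak G_{\bf M})$ and decide $\mathbf{HALT}$. The paper expresses the PVM-constrained supremum as a genuine universal sentence by invoking the fact that the PVM locus is a \emph{definable set} (the ``almost-near'' lemma of Kim--Paulsen--Schafhauser with an effective modulus); your penalty method is the hands-on version of exactly this idea and rests on the same lemma. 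Calling that step a ``standard functional-calculus perturbation'' and the rest ``routine continuous-logic bookkeeping'' understates it: the quantitative, effectively moduled stability of near-PVMs is the technical heart of the argument, and the paper explicitly credits Kim--Paulsen--Schafhauser for it.

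However, the penalty sentence you write down has the sign of the penalty backwards, so it does not encode the quantity you claim. You set $\sigma_{\frak G}=\sup_{\vec X}\bigl(M-F_{\frak G}(\vec X)+\lambda\Pi(\vec X)\bigr)$ and assert that $M-\sigma_{\frak G}^{\R}$ approximates $\sup\val(\frak G,p)$ over synchronous PVM strategies in $\R$. But $M-\sigma_{\frak G}^{\R}=\inf_{\vec X}\bigl(F_{\frak G}(\vec X)-\lambda\Pi(\vec X)\bigr)$; for any PVM tuple $\vec Y$ this is $\le F_{\frak G}(\vec Y)$, hence at most $\inf_{\text{PVM}}F_{\frak G}$ rather than the supremum, and it decreases without bound as $\lambda$ grows by plugging in tuples with $\Pi>0$. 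To drive the optimizer toward the PVM locus the penalty must be \emph{subtracted inside the $\sup$}: for instance $\sigma_\lambda:=\sup_{\vec X}\max\bigl(F_{\frak G}(\vec X)-\lambda\Pi(\vec X),\,0\bigr)$, which is a universal sentence, is nonincreasing in $\lambda$, always dominates $\sup_{\text{PVM in }\R}F_{\frak G}$, and converges to it by the almost-near lemma. Once corrected, the effectivity of the modulus then gives the computable rate needed to run the reduction.

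One further caution: for the completeness direction you appeal to a general principle that $\val^*(\frak G)=1$ for a synchronous game is witnessed in the limit by synchronous strategies. What the argument actually needs---and what the paper states---is a fact about the specific reduction built in the $\MIP^*=\operatorname{RE}$ paper, namely that its games already satisfy $\sval^*(\frak G_{\bf M})=1$ when $\bf M$ halts. Resting the reduction on a general synchronous-games theorem introduces an unverified hypothesis; quoting the property of the particular reduction is both sufficient and cleaner.
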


We will use $\operatorname{MIP}^*=\operatorname{RE}$ to prove this result.  But how?  Given a nonlocal game $\frak G$, the definition of $\val^*(\frak G)$ resembles a universal sentence in the language $L_{vNa}$ except that it is not a priori clear how to view the set of correlations $C_{qa}$ as something that we can quantify over in a II$_1$ factor.  

Thankfully, a specific subset of $C_{qa}$ can be characterized by a formula that our logic can handle. A strategy $p$ is called \emph{synchronous} if $p(a,b|x,x)=0$ for all $x\in [k]$ and distinct $a,b\in [n]$.  That is, $p$ is synchronous if, whenever both players are asked the same question, they always answer with the same answer.  We let $C_{qa}^s(k,n)$ (resp. $C_{qc}^s(k,n)$) denote the synchronous elements of $C_{qa}(k,n)$ (resp. $C_{qc}(k,n)$).  Given a nonlocal game $\frak G$ with $k$ questions and $n$ answers, we set its \emph{synchronous entangled value} to be $$\sval^*(\frak G):=\sup_{p\in C_{qa}^s(k,n)}\val(\frak G,p).$$  One defines the \emph{synchronous commuting value} $\sval^{co}(\frak G)$ in the obvious way.  In general, we have that $\sval^*(\frak G)\leq \val^*(\frak G)$ and $\sval^{co}(\frak G)\leq \val^{co}(\frak G)$.  .  

Paulsen et. al. \cite[Corollary 5.6]{quantum} showed that $p\in C_{qc}^s(k,n)$ if and only if there is a tracial state $\tau$ on $C^*(\bb F(k,n))$ such that $p(a,b|x,y)=\tau(e^x_ae^y_b)$.  Contrast this result with Theorem \ref{strategychar} above:  gone is the maximal tensor product of $C^*(\bb F(k,n))$ with itself, but instead the state is required to be a tracial state.  Later, Kim, Paulsen, and Schaufhauser \cite[Theorem 3.6]{KPS} showed that $p\in C_{qa}^s(k,n)$ if and only if there is an \emph{amenable} tracial state $\tau$ on $C^*(\bb F(k,n))$ such that $p(a,b|x,y)=\tau(e^x_ae^y_b)$.  One definition of an amenable tracial state $\tau$ on a \cstar-algebra $\A$ is that there is a $*$-homomorphism $\theta:\A\to \cal R^\u$ with a ucp lift $\A\to \ell^\infty(R)$ for which $\tau=\tau_{\R}^\u\circ \theta$, where $\tau_{\R}$ is the unique trace on $\R$.  There are many alternate characterizations of being an amenable trace showing that this is indeed a robust condition.  In fact, Kirchberg used the notion of amenable trace in his proof that, for finite von Neumann algebras, being QWEP is equivalent to being isomorphic to a $*$-subalgebra of $\cal R^\u$. 

In any event, the above characterization of $C_{qa}^s(k,n)$ can be used to show that $p\in C_{qa}^s(k,n)$ if and only if there are $n$-outcome PVMs $(f^x)_{x\in [k]}$ in $\R^\u$ such that $p(a,b|x,y)=\tau_\R^\u(f^x_af^y_b)$.  In the sequel, we let $X_n$ denote the set of PVMs in $\R^\u$ of length $n$.  By the previous paragraph, we have
$$\sval^*(\frak G)=\sup_{f^1,\ldots,f^k\in X_n}\left(\sum_{x,y\in [k]}\pi(x,y)\sum_{a,b\in [n]}D(x,y,a,b)\tau(f^x_af^y_b)\right)^{\R^\u}.$$  We now note two very important facts:
\begin{enumerate}
    \item The value contained in the parentheses is a legitimate first order formula evaluated in the ultrapower $\cal R^\u$ of $\cal R$.
    \item The \emph{proof} of $\MIP^*=\operatorname{RE}$ actually shows that the reduction $\bf M\mapsto \frak G_{\bf M}$ from Turing machines to nonlocal games is such that if $\bf M$ halts, then $\sval^*(\frak G_{\bf M})=1$.  
\end{enumerate}

From these two facts, it seems like the proof of Theorem \ref{GoldHart2} is complete, for if we could approximately compute the value of any universal sentence in $\cal R$, then we could approximate $\sval^*(\frak G_{\bf M})$ for any Turing machine $\bf M$ and thus be able to decide the halting problem!

There is one (not so minor) issue:  the supremum in the above display is still not technically allowable in our logic!  Indeed, we are taking the supremum over elements from a certain set $X_n$ rather than just tuples from the unit ball.  However, it turns out that the founders of continuous logic thought long and hard about such suprema and their efforts will pay off tremendously.

To explain this, we return to classical logic for one moment and the case of groups.  Given any group $G$, its center $Z(G)$ can be defined by the formula $\gamma(x):=\forall y(xy=yx)$, that is, $Z(G)=\{g\in G \ : \ \gamma(g) \text{ is true in }G\}$.  Consequently, given any formula $\theta(x)$, the formula $(\forall x\in Z(G))\theta(x)$ represents an actual sentence in classical logic, being shorthand for the more cumbersome $\forall x(\gamma(x)\rightarrow \theta(x))$. 

We are faced with a similar situation in the above paragraph.  The elements of $X_n$ are those that make the formula $$\max\left(\max_{i=1,\ldots,n}d(p_i,p_i^*),\max_{i=1,\ldots,n}d(p_i,p_i^2),d\left(\sum_{i=1}^n p_i,1\right)\right)$$ equal to $0$.  One would hope that we could thus take the supremum over this set of elements as a shorthand for a more complicated ``legitimate'' formula.  Unfortunately, such a move is not always possible.

More generally, given $L_{vNa}$-formulae $\theta(x)$ and $\psi(x)$, the expression $$\sup\{\psi(x)^{\cal R} \ : \ \theta(x)^{\cal R}=0\}$$ is only equivalent to $\sigma^{\cal R}$ for an actual $L_{vNa}$-sentence $\sigma$ if $\theta(x)$ satisfies a certain ``almost-near'' property, that is, for each $\epsilon>0$, there is a $\delta>0$ such that, for all $a\in \cal R_1$, if $\theta(a)^\R<\delta$, then there is $b\in \cal R_1$ with $\theta(b)^{\R}=0$ and $d(a,b)<\epsilon$.  In the operator algebraic literature, this is usually referred to as a \emph{weak stability} phenomena.  In the model theory literature, this is called being a \emph{definable set}.  (See the author's paper \cite{spectral} for more on definability in continuous logic and its connection to operator-algebraic matters.)

Now here is the fantastic (and fortuitious part):  the formula defining $X_n$ above does have this property!  And here's the kicker:  Kim, Paulsen, and Schaufhauser themselves proved it \cite[Lemma 3.5]{KPS} while establishing their above characterization of $C_{qa}^s(k,n)$.  Thus, we are entitled to write the above ``formula'' for $\sval^*(\frak G)$ as a shorthand for a legitimate sentence in the language of continuous logic.  There is a little bit of fine print to check, namely that the resulting sentence is in fact universal and that this transformation can be done effectively, but the details can indeed be carried out.  This completes the proof of Theorem \ref{GoldHart2} above and thus the model-theoretic proof of the negative solution to CEP.

\subsection{A G\"odelian refutation of the CEP}\label{sec7:godelian}

G\"odel's Incompleteness Theorem is one of the landmark intellectual achievements of the 20th century.  It addresses a seemingly simple question:  is there an algorithm such that, upon input a sentence in the language of number theory, returns the truth value of the sentence in the natural numbers?  Surprisingly, G\"odel proved that the answer is no \cite{godel}!  (See also \cite[Section 3.5]{enderton} for a more modern treatment.)

G\"odel actually proved something much stronger, namely he proved that any attempt to answer the previous question by giving an effective axiomatization of number theory is doomed to fail.  More specifically, there is a natural (and effective) collection of axioms known as \emph{Peano arithmetic} such that any effective extension $T$ of Peano arithmetic is destined to be incomplete, meaning there will be sentence $\sigma$ that is true in $\bb N$ but not provable from $T$.  Since simply listing all true sentences of $\bb N$ as axioms is obviously complete, it follows that the set of true sentences is not effectively enumerable.

We can use the ideas in the previous subsection to give a G\"odelian-style refutation of CEP.  Indeed, one can view CEP as the question of asking whether or not the effective list of axioms for being a II$_1$ factor is enough to axiomatize the universal theory of $\cal R$.  The failure of CEP shows that the answer to this is no.  But perhaps there is a stronger, but still effective, list of axioms extending the axioms for being a II$_1$ factor so that any model of these axioms would then satisfy the conclusion of CEP.  Our proof from the previous subsection shows that the answer is still no:

\begin{thm}[G. and Hart \cite{GH2}]
There does not exist an effectively enumerable list $T$ of axioms extending $T_{II_1}$ such that all models of $T$ are embeddable in $\R^\u$.
\end{thm}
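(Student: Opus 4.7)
The plan is to show that the existence of such a $T$ would make the universal theory of $\cal R$ computable, contradicting Theorem \ref{GoldHart2}. This is essentially a rerun of the argument in Subsection \ref{sec7:CEP} showing that CEP implies the computability of $\Th_\forall(\cal R)$, with the phrase ``every II$_1$ factor embeds in $\cal R^\u$'' replaced by ``every model of $T$ embeds in $\cal R^\u$''.

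Suppose toward a contradiction that such an effectively enumerable $T\supseteq T_{II_1}$ exists; we may assume $T$ is consistent, since otherwise the hypothesis is vacuously satisfied and the statement has no content. The crucial observation is that every $\cal M\models T$ has the same universal theory as $\cal R$: indeed, $\cal M$ is a II$_1$ factor (being a model of $T_{II_1}$), so $\cal R$ embeds trace-preservingly into $\cal M$, yielding $\sigma^{\cal M} \geq \sigma^{\cal R}$ for every universal sentence $\sigma$; conversely, by hypothesis $\cal M$ embeds into $\cal R^\u$, and since $\sigma^{\cal R^\u}=\sigma^{\cal R}$, we get $\sigma^{\cal M}\leq \sigma^{\cal R}$. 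Consequently, $\sup\{\sigma^{\cal M} : \cal M\models T\}=\sigma^{\cal R}$ for every universal sentence $\sigma$. Now apply the Completeness Theorem for continuous logic: this supremum equals $\inf\{r\in \bb Q^{>0} : T\vdash \sigma\dminus r\}$. Since $T$ is effectively enumerable, so is the set of its theorems, and thus we obtain an effective enumeration of rational upper bounds on $\sigma^{\cal R}$. Simultaneously, writing $\sigma=\sup_{\vec x}\varphi(\vec x)$, one enumerates lower bounds on $\sigma^{\cal R}$ by evaluating $\varphi$ on rational tuples from matrix algebras $M_k(\bb C)\hookrightarrow \cal R$ for growing $k$; these converge to $\sigma^{\cal R}$ because $\bigcup_k M_k(\bb C)$ is trace-norm dense in $\cal R$ and $\varphi$ is continuous. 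Running the two procedures in parallel until they agree within $\epsilon$ approximates $\sigma^{\cal R}$ to arbitrary precision, contradicting Theorem \ref{GoldHart2}.

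There is no substantive obstacle in this argument; it is an immediate adaptation of the CEP-to-computability reduction. All the difficult content has already been packed into Theorem \ref{GoldHart2}, whose proof routes through $\mip^*=\operatorname{RE}$ and the Kim--Paulsen--Schaufhauser characterization of $C_{qa}^s(k,n)$ via amenable traces. The present theorem is then the natural G\"odel-style reformulation: effective axiomatizability of any class of $\cal R^\u$-embeddable II$_1$ factors would give an effective upper-bounding procedure for $\Th_\forall(\cal R)$, which combined with the trivial matrix lower bounds would decide the halting problem via the reduction $\bf M\mapsto \frak G_{\bf M}$.
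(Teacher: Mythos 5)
Your proof is correct and is precisely the adaptation the paper has in mind: the paper itself offers no separate argument, merely remarking that the proof in Subsection 7.5 (that $\Th_\forall(\R)$ is not computable, hence no effective axiomatization $T_{II_1}$ can force $\R^\u$-embeddability) "shows that the answer is still no," and your write-out carries this through verbatim with $T$ in place of $T_{II_1}$. The two key observations you make explicit — that every model of $T$ has the same universal theory as $\R$ (using $\R \hookrightarrow \cal M$ in one direction and $\cal M \hookrightarrow \R^\u$ in the other), and that the Ben-Yaacov--Pedersen Completeness Theorem applies to any effectively enumerable consistent theory, not just $T_{II_1}$ — are exactly what is needed and match the paper's intended argument.
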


In particular, this shows that the collection of axioms $\sigma\dminus r$ for which $\sigma^{\cal R}\leq r$ is not effectively enumerable.

The previous theorem allows us to provide ``many'' counterexamples to CEP:

\begin{cor}
There is a sequence $\cal M_1, \cal M_2,\ldots,$ of separable II$_1$ factors, none of which embed into an ultrapower of $\cal R$, and such that, for all $i<j$, $\cal M_i$ does not embed into an ultrapower of $\cal M_j$.
\end{cor}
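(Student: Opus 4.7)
The plan is to prove the corollary by induction on $n$: at each stage, given a finite sequence $\cal M_1, \ldots, \cal M_n$ of II$_1$ factors witnessing the desired properties, I would show that a further II$_1$ factor $\cal M_{n+1}$ can always be appended, after which dependent choice furnishes the infinite sequence. The base case ($n=0$) is vacuous, so the real content lies in the inductive step, which I would handle by contradiction using the previous theorem.

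Suppose no $\cal M_{n+1}$ can be appended. Then every II$_1$ factor $\cal N$ either embeds into $\cal R^\u$ or has $\cal M_i \hookrightarrow \cal N^\u$ for some $i \leq n$. Using the characterization of embeddability via universal theories from Subsection~\ref{sec7:model}, the hypothesis $\cal M_i \not\hookrightarrow \cal R^\u$ supplies, for each $i \leq n$, a universal $L_{vNa}$-sentence $\tau_i$ with $\tau_i^{\cal M_i} > \tau_i^{\cal R}$; pick a rational $r_i$ strictly between these values and form
$$T_n := T_{II_1} \cup \{\tau_i \dminus r_i \; : \; 1 \leq i \leq n\}.$$
This $T_n$ is an effectively enumerable extension of $T_{II_1}$ (we adjoin only finitely many explicit axioms, each of which remains universal since $\tau_i \dminus r_i = \sup_{\vec x}(\varphi_i(\vec x) \dminus r_i)$ when $\tau_i = \sup_{\vec x} \varphi_i(\vec x)$), and it is consistent since $\cal R$ itself is a model (because $\tau_i^{\cal R} < r_i$ by construction).

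The key claim is that every model $\cal N$ of $T_n$ embeds into $\cal R^\u$: such an $\cal N$ satisfies $\tau_i^{\cal N} \leq r_i < \tau_i^{\cal M_i}$ for every $i \leq n$, hence $\Th_\forall(\cal M_i) \not\leq \Th_\forall(\cal N)$, so no $\cal M_i$ embeds in $\cal N^\u$, and the contradiction hypothesis then forces $\cal N \hookrightarrow \cal R^\u$. This exhibits an effectively enumerable extension of $T_{II_1}$ all of whose models embed into $\cal R^\u$, contradicting the previous theorem and closing the induction. The main conceptual obstacle is simply identifying the right $T_n$: once one sees that finitely many universal-sentence constraints can simultaneously exclude each earlier $\cal M_i$ from every model's ultrapower while still preserving $\cal R$ as a witness of consistency, the rest of the argument is essentially automatic.
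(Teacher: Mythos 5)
Your proof is correct and follows essentially the same approach as the paper: at the inductive step both adjoin to $T_{II_1}$ finitely many universal-sentence constraints that force any model's $\tau_i$-value strictly below $\tau_i^{\cal M_i}$ (so by the universal-theory characterization of embeddability, no $\cal M_i$ embeds into that model's ultrapower) while keeping $\cal R$ as a witness of consistency, and then invoke the preceding G\"odelian theorem. The only cosmetic difference is that you wrap the argument in a proof by contradiction, whereas the paper applies the theorem directly to the extended theory to produce a separable model not embeddable in $\cal R^\u$ and then checks it serves as $\cal M_{n+1}$; the two are logically interchangeable.
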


\begin{proof}
We construct the sequence inductively.  Set $\cal M_1$ to be any separable II$_1$ factor that does not embed into an ultrapower of $\cal R$.  Suppose now that $\cal M_1,\ldots,\cal M_n$ have been constructed satisfying the conclusion of the Corollary.  For each $i=1,\ldots,n$, let $\sigma_i$ be a nonnegative sentence such that $\sigma_i^{\cal R}=0$ but $\sigma_i^{\cal M_i}>0$.  For each $i=1,\ldots,n$, fix a rational number $\delta_i\in (0,\sigma_i^{\cal M_i})$.  Let $T$ be the theory of II$_1$ factors together with the single condition $\max_{i=1,\ldots,n}(\sigma_i\dminus \delta_i)=0$.  It is clear that $T$ is an effectively enumerable subset of the theory of $\cal R$.  Thus, by the previous theorem, there is a separable model $\cal M_{n+1}$ of $T$ such that $\cal M_{n+1}$ does not embed into an ultrapower of $\cal R$.  Given $i=1,\ldots,n$, since $\sigma_i^{\cal M_i}>\delta_i$ while $\sigma_i^{\cal M_{n+1}}\leq \delta_i$, it follows that $\cal M_i$ does not embed into an ultrapower of $\cal M_{n+1}$.  This indicates how to continue the recursive construction, completing the proof.
\end{proof}

\subsection{The universal theory of $\cal R$ and the moment approximation problem}\label{sec7:universal}

In this subsection, we offer a purely operator-algebraic reformulation of the statement that $\Th_\forall(\R)$ is not computable, first proven in \cite{GH2}.

Given positive integers $n$ and $d$, we fix variables $x_1,\ldots,x_n$ and enumerate all $*$-monomials in the variables $x_1,\ldots,x_n$ of total degree at most $d$ as $m_1,\ldots,m_L$. (Of course, $L=L(n,d)$ depends on both $n$ and $d$.)  We consider the map $\mu_{n,d}:\R_1^n \rightarrow \bb D^L$ given by $\mu_{n,d}(\vec a)=(\tau(m_i(\vec a)) \ : \ i=1,\ldots,L)$.  (Here, $\bb D$ is the complex unit disk.)

% for any tracial von Neumann algebra $(N,\tau)$, given by
% \[
% \bar{m}(\bar a) = (\tau(m_i(\bar a)) : i \leq L)
% \]

We let $X(n,d)$ denote the range of $\mu_{n,d}$ and $X(n,d,p)$ be the image of the unit ball of $M_p(\mathbb C)$ under $\mu_{n,d}$.  Notice that $\bigcup_{p\in \bb N} X(n,d,p)$ is dense in $X(n,d)$.

\begin{thm}
The following statements are equivalent:
\begin{enumerate}
    \item The universal theory of $\R$ is computable.
    \item There is a computable function $F:\bb N^3\to \bb N$ such that, for every $n,d,k\in \bb N$, $X(n,d,F(n,d,k))$ is $\frac{1}{k}$-dense in $X(n,d)$.
\end{enumerate}
\end{thm}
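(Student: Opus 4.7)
The plan is to reduce both statements to computability of suprema of the form $\sup_{\vec z \in X(n,d)} g(\vec z)$ for computable continuous $g \colon \bb D^{L(n,d)} \to \r$. The key preliminary observation is that any quantifier-free $L_{vNa}$-formula $\varphi(x_1,\ldots,x_n)$ is a computable continuous connective applied to the basic formulas $\tau_\Re(p(\vec x))$ and $\tau_\Im(p(\vec x))$ for $*$-polynomials $p$ of some maximal degree $d$; since these basic formulas are real-linear combinations of the coordinates of $\mu_{n,d}(\vec x)$, one obtains an effectively computable continuous function $g \colon \bb D^{L(n,d)} \to \r$ with $\varphi(\vec x) = g(\mu_{n,d}(\vec x))$. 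Consequently, for any universal sentence $\sigma = \sup_{\vec x} \varphi(\vec x)$, we have $\sigma^{\R} = \sup_{\vec z \in X(n,d)} g(\vec z)$, and this supremum is approximated by $\sup_{\vec z \in X(n,d,p)} g(\vec z) = \sup_{\vec y \in M_p(\bb C)_1^n} \varphi(\vec y)$, a finite-dimensional quantity.

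For $(2) \Rightarrow (1)$: given a universal sentence $\sigma$ and a rational $\epsilon > 0$, extract $n$, $d$, and $g$ as above, together with an effectively computable modulus of continuity $\omega_g$ for $g$ on $\bb D^{L(n,d)}$. Choose $k$ large enough that $\omega_g(1/k) < \epsilon/2$, set $p := F(n,d,k)$, and compute $\sup_{\vec y \in M_p(\bb C)_1^n} \varphi(\vec y)$ to accuracy $\epsilon/2$ by exhaustive search over a sufficiently fine rational net of the (computable compact) matrix ball $M_p(\bb C)_1^n$. The $\frac{1}{k}$-density hypothesis combined with the modulus bound yields $|\sup_{X(n,d)} g - \sup_{X(n,d,p)} g | \leq \omega_g(1/k) < \epsilon/2$, so the computed number is within $\epsilon$ of $\sigma^{\R}$.

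For $(1) \Rightarrow (2)$: fix $n, d, k$. For each $p \geq 1$, define the continuous function $h_p \colon \bb D^{L(n,d)} \to [0,2]$ by
\[
h_p(\vec z) := \inf_{\vec y \in M_p(\bb C)_1^n}\ \max_{1 \leq i \leq L} |z_i - \tr(m_i(\vec y))|.
\]
Each $h_p$ is $1$-Lipschitz in the $\ell^\infty$-metric and is effectively computable uniformly in $p$, being the infimum of an elementary function of $(\vec z, \vec y)$ over a fixed computable compact set. The formula $\sigma_{n,d,p} := \sup_{\vec x \in \R_1^n} h_p(\mu_{n,d}(\vec x))$ is (uniformly approximable by) a universal $L_{vNa}$-sentence whose value in $\R$ is the one-sided Hausdorff distance $\sup_{\vec w \in X(n,d)} \operatorname{dist}(\vec w, X(n,d,p))$. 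Compactness of $X(n,d)$ together with density of $\bigcup_p X(n,d,p)$ in $X(n,d)$ guarantees that, for each $k$, some $p$ satisfies $\sigma_{n,d,p}^{\R} < \frac{1}{2k}$. Using (1), search through $p = 1, 2, \ldots$ until the approximation of $\sigma_{n,d,p}^{\R}$ is certified to be $\leq \frac{1}{k}$; the first such $p$ defines $F(n,d,k)$, which is a total computable function by construction.

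The main technical obstacle, in both directions, is the bookkeeping around the phrase ``computable continuous function'': one must verify that the functions $g$ extracted from formulas and the auxiliary functions $h_p$ lie in the chosen computable dense class of connectives for $L_{vNa}$, and that moduli of continuity can be obtained effectively from syntactic data. These are routine facts from computable analysis but easy to fumble; once they are in hand, both directions reduce to straightforward approximation arguments and finite-dimensional optimization over computable compact sets.
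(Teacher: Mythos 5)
Your $(2)\Rightarrow(1)$ argument is essentially the paper's: extract the degree $d$ and arity $n$ from the universal sentence, use the modulus of continuity of the connective to pick $k$, set $p:=F(n,d,k)$, and optimize over a finite rational net of the matrix ball. For $(1)\Rightarrow(2)$, however, you take a genuinely different route. The paper's proof fixes an $\epsilon$-net $\{s_i\}$ of the ambient cube $\bb D^L$, uses computability of $\Th_\forall(\R)$ to estimate $\operatorname{dist}(s_i, X(n,d))$ for each net point, discards the far ones, and for each remaining $s_i$ searches (via the computable universal theory of $M_p(\bb C)$) for a $p_i$ bringing $s_i$ close to $X(n,d,p_i)$, finally taking the max. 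You instead compress the entire statement ``$X(n,d,p)$ is $\delta$-dense in $X(n,d)$'' into a single universal sentence $\sigma_{n,d,p}$ whose value in $\R$ is the one-sided Hausdorff distance, and search for the first $p$ making it small. Both work; the trade-off is that the paper's route only requires approximating distances to fixed rational points (a modest connective), whereas yours absorbs the matrix-side optimization into the connective $h_p$, so you must verify that an infimum over the computably compact set $M_p(\bb C)_1^n$ of an elementary expression is (uniformly in $p$) effectively approximable by connectives from the chosen dense computable class. You flagged this as the main technical obstacle, and correctly so: that verification, together with the termination argument for the search (which you handled by establishing a $\frac{1}{2k}$ margin), is exactly where the care is needed. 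Note also that ``compactness of $X(n,d)$'' in your existence argument is not needed and might not hold literally — total boundedness (automatic from $X(n,d)\subseteq\bb D^L$) together with density of $\bigcup_p X(n,d,p)$ in $X(n,d)$ suffices to produce a single $p$ with $\sigma_{n,d,p}^{\R}<\frac{1}{2k}$.
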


\begin{proof}
First suppose that the universal theory of $\R$ is computable.  We produce a computable function $F$ as in (2).  Fix $n$, $d$, and $k$, and set $\epsilon:=\frac{1}{3k}$.  Computably find $s_1,\ldots,s_t$, an $\epsilon$-net in $\bb D^L$.  For each $i=1,\ldots,t$, ask the universal theory of $\R$ to compute intervals $(a_i,b_i)$ with $b_i-a_i<\epsilon$ and with $\left(\inf_{\vec x}|\mu_{n,d}(\vec x)-s_i|\right)^\R\in (a_i,b_i)$.  For each $i=1,\ldots,t$ such that $b_i<2\epsilon$, let $p_i\in \bb N$ be the minimal $p$ such that when you ask the universal theory of $ M_p(\bb C)$ to compute intervals of shrinking radius containing $\left(\inf_{\vec x}|\mu_{n,d}(\vec x)-s_i|\right)^{M_p(\bb C)}$, there is a computation that returns an interval $(c_i,d_i)$ with $d_i<2\epsilon$.  Let $p$ be the maximum of these $p_i$'s.  We claim that setting $F(n,d,k):=p$ is as desired.  Indeed, suppose that $s\in X(n,d)$ and take $i=1,\ldots,t$ such that $|s-s_i|<\epsilon$.  Then $\left(\inf_{\vec x}|\mu_{n,d} (\vec x)-s_i|\right)^\R<\epsilon$, whence $b_i<2\epsilon$.  It follows that there is an interval $(c_i,d_i)$ as above with $\left(\inf_{\vec x}|\mu_{n,d}(\vec x)-s_i|\right)^{M_p(\bb C)}<d_i<2\epsilon$.  Let $a\in M_p(\bb C)$ realize the infimum.  Then $|\mu_{n,d}(\vec a)-s|<3\epsilon=\frac{1}{k}$, as desired.

Now suppose that $F$ is as in (2).  We show that that the universal theory of $\R$ is computable.  Towards this end, fix a universal sentence 
\[
\sigma = \sup_{\vec x} f(\tau(m_1),\ldots,\tau(m_\ell))
\]
where $\vec x = x_1,\ldots,x_n$, $m_1,\ldots,m_\ell$ are *-monomials in $\vec x$ of total degree at most $d$, and $f$ is a ``computable'' connective.  Fix also rational $\epsilon>0$.  We show how to compute the value of $\sigma^\R$ to within $\epsilon$.  Since $f$ is computable, it has a ``computable modulus of continuity'' $\delta$, meaning that we can find $k\in \bb N$ computably so that $\frac{1}{k} \leq \delta(\epsilon)$.  Set $p = F(n,d,2k)$.  Computably construct a sequence $\vec a_1,\ldots,\vec a_t\in (M_p(\mathbb{C})_1)^n$ that is a $\frac{1}{2k}$ cover of $(M_p(\mathbb C)_1)^n$ (with respect to the $\ell^1$ metric corresponding to the 2-norm).  Consequently, $\mu_{n,d}(\vec a_1),\ldots,\mu_{n,d}(\vec a_t)$ is a $\frac{1}{2k}$-cover of $X(n,d,p)$.  Set 
\[
r:=\max_{i=1,\ldots,t}f(\tau(m_1(\vec a_i)),\ldots,\tau(m_l(\vec a_i))).
\]
By assumption, $X(n,d,p)$ is $\frac{1}{2k}$-dense in $X(n,d)$.  It follows that $r\leq \sigma^\R\leq r+\epsilon$, as desired.
\end{proof}

\subsection{A negative solution to Tsirelson's problem from $\MIP^*=\operatorname{RE}$}\label{sec7:negative}

In this subsection, we offer an alternative proof of the negative solution to Tsirelson's problem using $\mip^*=\operatorname{RE}$ and the Completeness Theorem; we follow closely the treatment given in \cite{GH2}.  As in the definition of the synchronous entangled value $\sval^*(\frak G)$ of a nonlocal game $\frak G$ with $k$ questions and $n$ answers, we define its \emph{synchronous commuting value} to be $\sval^{co}(\frak G):=\sup_{p\in C_{qc}^s(k,n)}\val(\frak G,p)$.

\begin{defn}
Fix $0<r\leq 1$.  We define $\mip^{co,s}_{0,r}$ to be the set of those languages $\bf L$ for which there is an efficient mapping $z\mapsto \mathfrak G_z$ from strings to nonlocal games such that $z\in \bf L$ if and only if $\sval^{co}(\mathfrak G_z)\geq r$.
\end{defn}

\begin{thm}\label{mipcos}[G. and Hart \cite{GH2}]
For any $0<r\leq 1$, every language in $\mip^{co,s}_{0,r}$ belongs to the complexity class coRE.
\end{thm}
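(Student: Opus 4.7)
The plan is to express $\sval^{co}(\mathfrak G)$ as the value of a single universal $L_{vNa}$-sentence over II$_1$ factors, and then use the Completeness Theorem for continuous logic to recursively enumerate strings $z$ with $\sval^{co}(\mathfrak G_z)<r$.

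First I would translate $\sval^{co}(\mathfrak G)$ into model-theoretic language. By the Paulsen et al.\ characterization of $C_{qc}^s(k,n)$ recalled in Subsection \ref{sec7:synchronous}, a synchronous commuting strategy is precisely one of the form $p(a,b|x,y)=\tau(e^x_a e^y_b)$ for some tracial state $\tau$ on $C^*(\bb F(k,n))$. Applying the GNS construction to such a $\tau$ produces a tracial von Neumann algebra $(M,\tau)$ equipped with $k$ length-$n$ PVMs $f^1,\dots,f^k$ realizing $p$. Hence
\[
\sval^{co}(\mathfrak G)=\sup_{(M,\tau)}\sup_{f^1,\dots,f^k\in X_n(M)}\sum_{x,y}\pi(x,y)\sum_{a,b}D(x,y,a,b)\tau(f^x_a f^y_b),
\]
where the outer supremum ranges over all tracial von Neumann algebras. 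Since every tracial von Neumann algebra embeds into a II$_1$ factor and the inner expression is a supremum (so its value is monotone under embedding), the outer supremum may equivalently be taken over all II$_1$ factors.

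Next I would invoke the weak stability/definability of the PVM axioms established by Kim--Paulsen--Schaufhauser (as exploited in Subsection \ref{sec7:synchronous}) to absorb the ``sup over PVMs'' into a genuine supremum over tuples from the unit ball. This lets me rewrite the right-hand side as $\sup_M \sigma_{\mathfrak G}^M$ for an actual universal $L_{vNa}$-sentence $\sigma_{\mathfrak G}$, and the construction is effective so that the composite $z\mapsto \mathfrak G_z\mapsto \sigma_{\mathfrak G_z}$ is computable. Applying the Completeness Theorem (Subsection \ref{sec7:completeness}) yields
\[
\sval^{co}(\mathfrak G_z)=\sup\{\sigma_{\mathfrak G_z}^M\ :\ M\text{ a II}_1\text{ factor}\}=\inf\{r'\in\bb Q^{>0}\ :\ T_{II_1}\vdash \sigma_{\mathfrak G_z}\dminus r'\}.
\]
Thus $z\notin \mathbf{L}$ (equivalently $\sval^{co}(\mathfrak G_z)<r$) if and only if there exists a rational $r'<r$ with $T_{II_1}\vdash \sigma_{\mathfrak G_z}\dminus r'$. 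Since $T_{II_1}$ is effectively axiomatizable, its set of theorems is in RE. Dovetailing over $z$, rationals $r'<r$, and proofs from $T_{II_1}$, we obtain an RE enumeration of $\{z\ :\ z\notin \mathbf{L}\}$, so $\mathbf{L}\in \operatorname{coRE}$.

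The main obstacle is ensuring that the passage from the PVM-constrained sup to a bona fide universal sentence $\sigma_{\mathfrak G}$ is both \emph{exact} (rather than merely approximate, as would happen with a naive implicational encoding) and \emph{effective} in $\mathfrak G$. This requires unpacking the KPS weak-stability modulus and incorporating it into the formula-building process in a way that respects the syntactic constraints of continuous logic, so that the resulting sentence is truly of the form $\sup_{\vec x}\varphi(\vec x)$ with $\varphi$ a computable quantifier-free expression. Once this translation is in hand, the remainder is a direct application of the Completeness Theorem together with the RE nature of the continuous proof system.
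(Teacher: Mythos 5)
Your proposal is correct, but it takes a genuinely different route from the paper's proof.  The paper works in the language $L_{\tau C^*}$ of \emph{tracial $C^*$-algebras} and encodes ``$\sval^{co}(\frak G)\geq r$'' as the satisfiability of the theory $T_{\tau C^*}\cup\{\theta_{\frak G,r}=0\}$, where $\theta_{\frak G,r}$ is an \emph{inf}-sentence. It then invokes only the soft form of the Completeness Theorem---a theory is satisfiable iff it does not prove $\bot$---and enumerates proofs of contradictions.  Crucially, this sidesteps any need to treat PVMs as a definable set: the sentence $\theta_{\frak G,r}$ simply penalizes tuples for failing to be PVMs, and satisfiability does the rest.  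You instead push everything into $L_{vNa}$, move to II$_1$ factors, and package $\sval^{co}(\frak G)$ as $\sup_M\sigma_{\frak G}^M$ for an honest universal sentence $\sigma_{\frak G}$, then apply the full numerical form of the Completeness Theorem $\sup_M\sigma^M=\inf\{r':T_{II_1}\vdash\sigma\dminus r'\}$.  This mirrors what the paper does for $\sval^*$ in Subsection \ref{sec7:synchronous}, but the paper deliberately avoids it for $\sval^{co}$.

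Your route buys a bit more: you obtain a uniform RE procedure approximating $\sval^{co}(\frak G)$ from above, not merely a threshold test at a single $r$, so the dependence on $r$ disappears and the theorem follows for all $r$ simultaneously.  The price is the extra machinery you flag at the end: you must verify that the KPS ``almost-near'' perturbation of approximate PVMs to exact PVMs holds with a computable modulus \emph{uniformly over all II$_1$ factors} (not just in $\R^\u$, which is what \cite[Lemma 3.5]{KPS} literally records), so that the set of PVMs is definable relative to $T_{II_1}$ and the substitution into $\sigma_{\frak G}$ is both exact and effective.  This is true---the perturbation is a spectral-calculus argument that works in any tracial von Neumann algebra with a modulus independent of the algebra---but it is an additional lemma you would need to supply, and it is precisely the lemma the paper's $L_{\tau C^*}$ satisfiability trick is designed to avoid.
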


In other words, if $\mathbf{L}\in \mip^{co,s}_{0,r}$, then there is an algorithm which enumerates the complement of $\bf L$.

% The following is \cite[Corollary 5.6]{quantum}
% \begin{fact}\label{vern}
% The correlation $p(i,j|v,w)$ belongs to $C_{qc}^s(n,k)$ if and only if there is a C*-algebra $A$, a tracial state $\tau$ on $A$, and a generating family of projections $p_{v,i}$ such that $\sum_{i=1}^k p_{v,i}=1$ for each $v=1,\ldots,n$ and such that $p(i,j|v,w)=\tau(p_{v,i}p_{w,j})$.
% \end{fact}

For the remainder of this subsection, we work in the first-order language $L_{\tau C^*}$ for tracial \cstar-algebras, that is, \cstar-algebras equipped with a distinguished tracial state, which is defined in a manner analogous to the language $L_{vNa}$ used to study tracial von Neumann algebras.  Fix a nonlocal game $\frak G$ with $k$ questions and $n$ answers.  Let $w=(w_{x,a})_{x\in [k],a\in [n]}$ denote a tuple of variables and $\psi_\mathfrak G(w)$ be the $L_{\tau C^*}$-formula $$\sum_{(x,y)\in[k]\times [k]}\pi(x,y)\sum_{(a,b)\in[n]\times[n]}D(x,y,a,b)\tau(w_{x,a}w_{y,b}),$$ which informally calcualates the expected value of winning when playing according to a strategy from $C^s_{qc}$ (recalling the Paulsen et al. characterization of elements of $C^s_{qc}$).  We then let $\theta_{\mathfrak G,r}$ be the $L_{\tau C^*}$-sentence 
\[
\inf_w\max\left(\max_{x,a}(\|w_{x,a}^2-w_{x,a}\|,\max_{x,a}\|w_{x_a}^*-w_{x,a}\|,\max_x\|\sum_i w_{x,a}-1\|,r\dotminus \psi_\mathfrak G(w)\right),
\]
%\[
%\inf_{\varphi(x_{v,i}) = 0}\max_v\left(r\dotminus \psi_\mathfrak G(x_{v,i})\right).
%\]

which informally calculates $\sval^{co}(\frak G)$.

Let $T_{\tau C^*}$ be the $L_{\tau C^*}$-theory of tracial C*-algebras.  The following is immediate:

\begin{prop}\label{vern2}
For any nonlocal game $\mathfrak G$, we have $\sval^{co}(\mathfrak G)\geq r$ if and only if the theory $T_{\tau C^*}\cup\{\theta_{\mathfrak G,r}=0\}$ is \emph{satisfiable}, that is, has a model.
\end{prop}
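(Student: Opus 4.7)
The plan is to prove both implications using the Kim--Paulsen--Schafhauser characterization cited just above the statement, namely that $p \in C_{qc}^s(k,n)$ iff $p(a,b|x,y) = \tau(e^x_a e^y_b)$ for some tracial state $\tau$ on $C^*(\mathbb F(k,n))$, together with the universal property of the free product $C^*(\mathbb F(k,n)) \cong \Asterisk_{x=1}^k \mathbb C^n$.

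For the forward direction, suppose $\sval^{co}(\mathfrak G) \geq r$. Since $C_{qc}^s(k,n)$ is a closed subset of the compact set $C_{qc}(k,n)$, the supremum in the definition of $\sval^{co}(\mathfrak G)$ is attained at some $p \in C_{qc}^s(k,n)$. Pick a witnessing tracial state $\tau$ on $\mathcal A := C^*(\mathbb F(k,n))$ via Paulsen et al., and consider the tracial \cstar-algebra $(\mathcal A, \tau)$. Interpreting the variables $w_{x,a}$ as the projections $e^x_a$ sitting inside the $x$-th free factor $\mathbb C^n$, each of the first three terms inside the inner max of $\theta_{\mathfrak G,r}$ vanishes exactly (the $e^x_a$ are self-adjoint idempotents summing to $1$), while $r \dotminus \psi_{\mathfrak G}(w) = r \dotminus \val(\mathfrak G, p) = 0$ by the assumption $\val(\mathfrak G,p) \geq r$. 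Hence the whole inner expression is $0$ at this tuple, so $\theta_{\mathfrak G,r}^{(\mathcal A,\tau)} = 0$, exhibiting $(\mathcal A, \tau)$ as a model of $T_{\tau C^*} \cup \{\theta_{\mathfrak G,r} = 0\}$.

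For the converse, assume $(\mathcal A, \tau) \models T_{\tau C^*} \cup \{\theta_{\mathfrak G,r} = 0\}$. The only nontrivial step is that the $\inf_w$ in $\theta_{\mathfrak G,r}$ evaluating to $0$ only guarantees \emph{approximate} PVMs inside $(\mathcal A, \tau)$ itself, so I would pass to a countably saturated elementary extension $(\widetilde{\mathcal A}, \widetilde\tau)$, in which, by saturation plus the value being $0$, the infimum is witnessed by an actual tuple $w = (w_{x,a})$ satisfying each constraint \emph{exactly}: the $w_{x,a}$ are genuine self-adjoint projections, $\sum_a w_{x,a} = 1$ for every $x$, and $\psi_{\mathfrak G}(w)^{(\widetilde{\mathcal A},\widetilde\tau)} \geq r$. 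The universal property of $C^*(\mathbb F(k,n)) \cong \Asterisk_{x=1}^k \mathbb C^n$ then supplies a unital $*$-homomorphism $\Phi: C^*(\mathbb F(k,n)) \to \widetilde{\mathcal A}$ sending $e^x_a$ to $w_{x,a}$, and the pullback $\tilde\tau := \widetilde\tau \circ \Phi$ is a tracial state on $C^*(\mathbb F(k,n))$. Reapplying the Paulsen et al.\ characterization, the strategy $p(a,b|x,y) := \tilde\tau(e^x_a e^y_b)$ lies in $C_{qc}^s(k,n)$ and satisfies $\val(\mathfrak G, p) = \psi_{\mathfrak G}(w)^{(\widetilde{\mathcal A}, \widetilde\tau)} \geq r$, yielding $\sval^{co}(\mathfrak G) \geq r$.

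The main (really, essentially only) obstacle is the saturation step, which converts ``the infimum is $0$'' into ``the infimum is attained'' with genuine projections in hand; once this is done, the universal property of the free product $\Asterisk_{x=1}^k \mathbb C^n$ and the Paulsen et al.\ correspondence make the rest of the argument mechanical.
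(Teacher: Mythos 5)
Your proof is correct. The paper dismisses the proposition as ``immediate'' and supplies no argument, so there is nothing to compare against; your write-up fills in exactly the details one would want. Two small remarks. First, the opening sentence misattributes the characterization of $C_{qc}^s$: the fact that $p\in C_{qc}^s(k,n)$ iff $p(a,b|x,y)=\tau(e^x_ae^y_b)$ for a tracial state $\tau$ on $C^*(\mathbb F(k,n))$ is due to Paulsen, Severini, Stahlke, Todorov, and Winter (the result the paper cites as \cite[Corollary 5.6]{quantum}); the Kim--Paulsen--Schafhauser result is the analogous statement for $C_{qa}^s$ with amenable traces, which you do not need here (and correctly do not use). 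You in fact cite ``Paulsen et al.'' correctly in the body of the argument, so this is only a slip in the framing. Second, the saturation step in the converse can be sidestepped if one prefers: the operator-norm constraints defining a PVM are weakly stable (almost-projections summing almost to $1$ are uniformly close to an exact PVM), so for each $\epsilon>0$ one can correct the approximate witness inside $(\mathcal A,\tau)$ itself to an exact PVM $f^\epsilon$ with $\psi_{\mathfrak G}(f^\epsilon)\geq r-O(\epsilon)$, obtain genuine strategies $p^\epsilon\in C_{qc}^s(k,n)$, and conclude by closedness of $C_{qc}^s(k,n)$ and continuity of $\val(\mathfrak G,\cdot)$. Both routes are sound; yours is cleaner if one is comfortable with saturated models, the other avoids passing to an elementary extension at the cost of invoking the perturbation lemma.
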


We will also need the following immediate consequence of the 
Completeness Theorem:

\begin{lem}\label{con}
Let $U$ be a continuous theory.  Then $U$ is satisfiable if and only if $U\not\vdash \bot$\footnote{$\bot$ represents a contradiction i.e. any continuous sentence which cannot evaluate to 0.  For instance, the constant function 1.}.
%(1\dotminus \sup_x d(x,x))\dotminus \frac{1}{2}$.
\end{lem}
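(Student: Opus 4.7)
The plan is to derive the lemma directly from the Soundness Theorem and the Approximate Completeness Theorem of Ben-Yaacov--Pedersen, both recalled in Subsection \ref{sec7:completeness}. The forward direction is essentially Soundness; the reverse direction is an application of the approximate completeness equation to the constant sentence $1$ (which, per the footnote, is an allowable instance of $\bot$).

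For $(\Rightarrow)$, I would argue by contradiction. Suppose $U$ is satisfiable, witnessed by some $\cal M\models U$, and at the same time $U\vdash \bot$. Then Soundness forces $\bot^{\cal M}=0$, directly contradicting the defining property of $\bot$ that it is a continuous sentence which cannot evaluate to $0$ in any structure. Hence $U\not\vdash \bot$.

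For $(\Leftarrow)$, suppose $U$ has no model. I would apply the Approximate Completeness Theorem (stated in the paper for $T_{II_1}$ but valid for any continuous theory in the Ben-Yaacov--Pedersen formulation) to the constant sentence $\sigma=1$. The supremum on the left ranges over the empty set of models and is therefore $-\infty$ (or at worst $\leq 0$ under any reasonable convention), so that
\[
\inf\{r\in \bb Q^{>0} : U\vdash 1\dminus r\}\;\leq\; 0.
\]
Consequently, for every rational $r>0$ one has $U\vdash 1\dminus r$. Fixing any rational $r\in (0,1)$, the sentence $1\dminus r$ is a positive constant, and hence is itself a sentence that cannot evaluate to $0$ in any structure. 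By the footnote's definition this qualifies as a choice of $\bot$, and so $U\vdash 1\dminus r$ already witnesses $U\vdash \bot$.

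The only mildly delicate points, and where I would exercise care in writing the proof out in full, are the interpretation of $\sup\emptyset$ and the confirmation that the Completeness Theorem of Ben-Yaacov--Pedersen is genuinely stated for an arbitrary continuous theory $U$ rather than only for $T_{II_1}$; both are standard in the continuous-logic literature. Beyond these bookkeeping matters, the lemma is essentially the familiar repackaging of ``syntactic consistency equals semantic satisfiability'' in the approximate form appropriate to continuous logic.
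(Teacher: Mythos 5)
Your argument is correct, and since the paper gives no proof of this lemma (treating it as an immediate consequence of the Completeness Theorem), yours is a reasonable reconstruction. The forward direction via Soundness is exactly the standard move. The reverse direction, via the approximate completeness equation applied to $\sigma = 1$, does go through once the two caveats you flag are checked, and both resolve favorably: Ben-Yaacov and Pedersen formulate their equation for arbitrary continuous theories, not just $T_{II_1}$, and for unsatisfiable $U$ the left-hand side should be read as $\inf\{r : U \models \sigma \leq r\}$, which is vacuously the lower bound of the range of $\sigma$, hence $0$ for the constant sentence $1$. The one thing worth noting is that your route is more indirect than what the paper means by ``immediate.'' The Ben-Yaacov--Pedersen Completeness Theorem directly includes the G\"odel-style statement that a continuous theory is satisfiable if and only if it is (syntactically) consistent, established via a Henkin construction; the approximate equation displayed in the paper is ordinarily a corollary of that fact rather than the other way around. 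Deriving ``consistent implies satisfiable'' from the approximate equation is therefore logically coherent if one takes that equation as a black box, but it inverts the usual order of dependence and would be circular if one tried to unwind the proof of the approximate form. The economical route is simply to cite the G\"odel form of the theorem, which sidesteps the constant sentence and the empty supremum entirely.
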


We can now prove Theorem \ref{mipcos}.  Let $\bf L$ belong to $\mip^{co,s}_{0,r}$ and consider a string $z\notin \bf L$, with corresponding game $\mathfrak G_z$.  By Proposition \ref{vern2} and Lemma \ref{con}, we have that $T_{\tau C^*}\cup\{\theta_{\mathfrak G_z,r}=0\} \vdash \bot.$
%(1\dotminus \sup_x d(x,x))\dotminus \frac{1}{2}$.  
 Since this latter condition is recursively enumerable, the proof of Theorem \ref{mipcos} is complete.  

% We then check if $T\cup \{\theta_{\mathfrak G_z,r}=0\}$ is satisfiable.  By the previous lemma, to do that, we need to check that it does not prove the above contradiction.  This is a $\Pi_1$ statement since the set of theorems of $T\cup \{\theta_{\mathfrak G_z,r}=0\}$ is recursively enumerable.

One can now deduce the failure of Tsirelson's problem from MIP*=RE as follows.  Suppose, towards a contradiction, that $C_{qa}^s(k,n)=C_{qc}^s(k,n)$ for every $k$ and $n$.  Let $\bf M\mapsto \mathfrak G_{\bf M}$ be the efficient mapping from Turing machines to nonlocal games provided by $\mip^*=\operatorname{RE}$.  Given a Turing machine $\bf M$, one simultaneously starts computing lower bounds on $\val^*(\mathfrak G_{\bf M})$ while running proofs from $T_{\tau C^*}\cup\{\theta_{\mathfrak G_{\bf M},1}=0\}$.  Since we are assuming that $C_{qa}^s(k,n)=C_{qc}^s(k,n)$ (where $k$ and $n$ are the number of questions and answers of $\frak G_z$), we have that either the first computation eventually yields the fact that $\val^*(\mathfrak G_{\cal M})>\frac{1}{2}$, in which case $\cal M$ halts, or else the second computation eventually yields the fact that $T_{\tau\text{\cstar}}\cup\{\theta_{\mathfrak G_{\bf M},1}=0\}\vdash \bot$,
%(1\dminus \sup_xd(x,x))\dminus \frac{1}{2}$, 
in which case $\sval^*(\mathfrak G_{\bf M})<1$, and $\bf M$ does not halt.  In this way, we can decide the halting problem, a contradiction.  Note that we derived the a priori stronger statement that $\cqa^s(k,n)\not=\cqc^s(k,n)$ for some $k$ and $n$.

\section{The enforceable II$_1$ factor (should it exist)}\label{sec8}

In this section, we describe a model-theoretic weakening of CEP, namely the statement that the enforceable II$_1$ factor exists.  In order to explain this statement and its connection to CEP, we first need to introduce a certain two-player game.

\subsection{A different kind of game}\label{sec8:different}

We introduce a method for building tracial von Neumann algebras first introduced in \cite{GoldGames} for an arbitrary structure in continuous logic (based on the discrete case presented in Hodges' book \cite{hodges}).  This method goes under many names, such as \textit{Henkin constructions}, \textit{model-theoretic forcing}, or \textit{building models by games}.

We fix a countably infinite set $C$ of distinct symbols that are to represent generators of a separable tracial vNa that two players (traditionally named $\forall$ and $\exists$) are going to build together (albeit adversarially).
The two players take turns playing finite sets $\Sigma$ of expressions of the form $\left|\|p(c)\|_{\tau}-r\right|<\epsilon$, where $c$ is a tuple of variables from $C$, $p(x)$ is a $*$-polynomial, and each player's move is required to extend (that is, contain) the previous player's move.  These sets are called (open) \emph{conditions}.  The game begins with $\forall$'s move.  Moreover, these conditions are required to be \emph{satisfiable}, meaning that there should be some tracial von Neumann algebra $\cal M$ and some tuple $a$ from $\cal M_1$ such that $\left|\|p(a)\|_{\tau}-r\right|<\epsilon$ for each such expression in the condition.  We play this game for countably many rounds.
At the end of this game, we have enumerated some countable, satisfiable set of expressions. Provided that the players address a ``dense'' set of moments infinitely often, they can ensure that the play is \emph{definitive}, meaning that the final set of expressions yields complete information about all $*$-polynomials over the variables $C$ (that is, for each $*$-polynomial $p(x)$ and each tuple $c$ from $C$, there should be a unique $r$ such that the play of the game implies that $\|p(c)\|_{\tau}=r$) and that this data describes a countable, dense $*$-subalgebra of a unique tracial von Neumann algebra, which is called the \textit{compiled structure}.  In what follows, we assume all plays of the game are definitive.
\subsection{Enforceable properties of tracial von Neumann algebras}\label{sec8:enforceable}

Crucial to the connection between the above games and the CEP is the notion of an enforceable property:

\begin{defn}
Given a property $P$ of tracial von Neumann algebras, we say that $P$ is an \textit{enforceable} property if there a strategy for $\exists$ so that, regardless of player $\forall$'s moves, if $\exists$ follows the strategy, then the compiled structure will have property $P$.
\end{defn}

Perhaps being an enforceable property seems so severe that there are in fact no enforceable properties.  We will soon see that many interesting properties are in fact enforceable.  First, we mention the \emph{Conjunction lemma} \cite[Lemma 2.4]{GoldGames}: If $P_n$ is an enforceable property for each $n\in \mathbb N$, then so is the conjunction $\bigwedge_n P_n$.

As a first example of an enforceable property of tracial von Neumann algebras, we show that being a factor is enforceable.  To see this, let $\theta(x)$ be the $L_{vNa}$-formula $\sqrt{\|x\|_{\tau}^2-\tau(x)^2}$ and let $\eta(x)$ be the $L_{vNa}$-formula $\sup_y\|xy-yx\|_{\tau}$.  Finally, let $\sigma$ be the $L_{vNa}$-sentence $\sup_x(\theta(x)\dminus \eta(x))$.  It was shown in \cite{mtoa2} that a von Neumann algebra $\cal M$ is a factor if and only if $\sigma^{\cal M}=0$.  To see that being a factor is enforceable, by the Conjunction Lemma, it suffices to show that, given any $n\in \bb N$ and rational $\epsilon>0$, the expression $(\theta(c_n)\dminus \eta(c_n))<\epsilon$ is enforceable.  To see that this is the case, suppose that player $\forall$ opened the game with the open condition $\Sigma$.  Without loss of generality, we may suppose that $c_n$ appears in $\Sigma$.  Since $\Sigma$ is satisfiable in some tracial von Neumann algebra, it is also satisfiable in some II$_1$ factor $\cal M$ (as every tracial von Neumann algebra embeds in a II$_1$ factor).  Consequently, since $\sigma^{\cal M}=0$, we see that $\cal M$ also witnesses that $\Sigma\cup\{\theta(c_n)\dminus \eta(c_n)<\epsilon\}$ is a condition, whence $\exists$ can respond with this condition, as desired.  

We next show that being a II$_1$ factor is enforceable.  Since being a factor is enforceable, it suffices to show that it is enforceable that, in the compiled structure, there is a projection of irrational trace (say $\frac{1}{\pi}$).  By the Conjunction Lemma again, it suffices to show that, for any rational $\epsilon>0$, there is some $n\in \bb N$ for which $\max(d(c_n,c_n^*),d(c_n,c_n^2),|\tau(c_n)-\frac{1}{\pi}|)<\epsilon$ is enforceable.  Indeed, if this is enforceable, then since ``almost'' projections are near actual projections, there will be actual projections in the compiled structure whose trace approaches $\frac{1}{\pi}$, whence there will be an actual projection of trace $\frac{1}{\pi}$ as desired.  However, this condition is clearly enforceable by the exact same argument used in the previous paragraph, this time, using a ``fresh'' constant, that is, some $c_n$ which did not appear in player $\forall$'s opening play $\Sigma$.

% We can now show that being a factor is enforceable.  Indeed, by the Conjunction Lemma, it suffices to show that, for each $n$, there is $m$ such that $[c_n,c_m]\not=0$.   To see this, suppose that player $\forall$ opens with the move $\Sigma$.  Without loss of generality, $c_n$ appears in $\Sigma$.  Since every tracial von Neumann algebra embeds in a II$_1$ factor, we may suppose that $\Sigma$ is satisfied in a II$_1$ factor, say by a tuple $a$.  Since the element $a_0$ of $a$ corresponding to $c_n$ is not in the center, we may take a constant $c_m$ not appearing in $\Sigma$ and player $\exists$ can respond with $\Sigma\cup\{\epsilon<\|[c_n,c_m]\|_{\tau}\}$.  WHAT ABOUT THE CASE THAT CN IS A COMPLEX NUMBER?

One can go even further and show that being a \emph{McDuff} II$_1$ factor is enforceable.  A II$_1$ $\cal M$ factor is McDuff if $\cal M\bar\otimes \cal R\cong \cal M$.  For example, $\cal R$ is McDuff.  An alternate formulation for being McDuff will prove useful:  $\cal M$ is McDuff if and only if there is a copy of $M_2(\bb C)$ inside of $\cal M'\cap \cal M^\u$.  This amounts to showing that:  for any finite $\cal F\subseteq \cal M$ and any rational $\epsilon>0$, there are matrix units $(e_{ij})_{i,j=1,2}$ for $M_2(\bb C)$ for which $\|[x,e_{ij}]\|_{\tau}<\epsilon$ for all $x\in \cal F$ and all $i,j=1,2$.  Hopefully by now the strategy is apparent:  given any open play $\Sigma$ for player $\forall$, we realize $\Sigma$ in some tracial von Neumann algebra $\cal M$.  We then note that $\Sigma$ is also realized in $\cal M\bar \otimes M_2(\bb C)$ and then choose fresh constants $c_{n_1},\ldots,c_{n_4}$ and say that they are ``almost'' matrix units for $M_2(\bb C)$ which almost commute with $c_1,\ldots,c_n$.  As we let $n$ increase and $\epsilon$ decrease and using the fact that ``almost'' matrix units are near actual matrix units, the result follows using the Conjunction Lemma.

\subsection{Existentially closed tracial von Neumann algebras (and yet another reformulation of CEP)}\label{sec8:existentially}

One can push the line of reasoning in the previous subsection much further.  First, it is helpful to introduce the notion of an \emph{existentially closed (e.c.) tracial von Neumann algebra}.  A tracial von Neumann algebra $\cal M$ is e.c.\ if:  whenever $\cal M\subseteq \cal N$, there is an embedding $\cal N\hookrightarrow \cal M^\u$ into some ultrapower of $\cal M$ that restricts to the diagonal embedding of $\cal M$ into its ultrapower.  If $\cal N$ is separable (whence so is $\cal M$), then this is equivalent to the above definition where we can use any nonprincipal ultrapower of $\cal M$.  This version of the definition is the semantic version.  Syntactically, $\cal M$ is e.c.\ if:  for any existential formula $\varphi(x)$ (where $x$ is a finite tuple of variables), any $a\in \cal M_1$, and any tracial von Neumann algebra $\cal N$ containing $\cal M$, we have $\varphi(a)^{\cal M}=\varphi(a)^{\cal N}$.  In other words, any phenomena that ``could happen'' in an extension of $\cal M$ approximately also happens in $\cal M$.  It is for this reason that one should think of an e.c.\ tracial von Neumann algebra as being an analog of an algebraically closed field.

E.c.\ tracial von Neumann algebras appear in abundance.  Indeed, any tracial von Neumann algebra embeds into an e.c.\ one of the same density character.  Moreover, we know many properties of an e.c.\ tracial von Neumann algebra:  they must be McDuff II$_1$ factors, all of their automorphisms must be approximately inner, etc...  The reader interested in learning more about e.c.\ tracial von Neumann algebras can consult \cite{FGHS}, \cite{spectral}, and \cite{GHS}.

But can we name a concrete e.c.\  tracial von Neumann algebra?  Well:

\begin{thm}[Farah, G., Hart, and Sherman \cite{FGHS}]
$\cal R$ is an e.c.\ tracial von Neumann algebra if and only if CEP has a positive solution.
\end{thm}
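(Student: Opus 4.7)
The plan is to prove both directions directly from the semantic definition of existential closedness, with the forward direction being essentially a one-line application and the backward direction requiring a uniqueness-of-embeddings input.

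For the forward direction $(\Rightarrow)$, suppose $\R$ is e.c.\ and let $\cal M$ be any separable tracial von Neumann algebra. First, embed $\cal M$ into a separable II$_1$ factor $\cal N$ using the free product trick recalled in Subsection \ref{sec3:tracial}, e.g.\ $\cal N := \cal M * L(\bb Z)$. Since $\R$ embeds into every II$_1$ factor, fix such an embedding and identify $\R$ with its image so that $\R \subseteq \cal N$. Applying the e.c.\ property of $\R$ to this inclusion yields an embedding $\Phi : \cal N \hookrightarrow \R^\u$ that restricts to the diagonal embedding of $\R$. Restricting $\Phi$ further to $\cal M \subseteq \cal N$ produces the desired trace-preserving embedding $\cal M \hookrightarrow \R^\u$, and this establishes CEP.

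For the backward direction $(\Leftarrow)$, suppose CEP holds and let $\R \subseteq \cal N$ be an inclusion; using the remark after the definition of e.c., it suffices to treat separable $\cal N$ with a fixed nonprincipal $\u$ on $\bb N$. By CEP there is some trace-preserving embedding $\pi : \cal N \hookrightarrow \R^\u$, but there is no a priori reason that the restriction $\pi|_\R$ should coincide with the diagonal embedding $\iota : \R \hookrightarrow \R^\u$ --- they are merely two a priori unrelated embeddings of $\R$ into $\R^\u$. To repair this, invoke Jung's theorem, which asserts that any two trace-preserving embeddings of $\R$ into $\R^\u$ are unitarily conjugate inside $\R^\u$. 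This yields a unitary $u \in \R^\u$ with $u\, \pi(r)\, u^* = \iota(r)$ for every $r \in \R$, so $\Ad(u) \circ \pi : \cal N \hookrightarrow \R^\u$ is a trace-preserving embedding that now restricts to the diagonal embedding of $\R$. Hence $\R$ is e.c.

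The main obstacle is really locating and applying the correct input for the backward direction: without the uniqueness-up-to-unitary-conjugacy of embeddings of $\R$ into $\R^\u$ provided by Jung's theorem, CEP would only give some abstract embedding of $\cal N$ into $\R^\u$ with no way to align it with the prescribed inclusion $\R \subseteq \cal N$. Everything else --- the free-product embedding into a II$_1$ factor, the uniqueness of the trace on $\R$ ensuring that the relevant embeddings are automatically trace-preserving, and the bookkeeping of restrictions --- is routine from material already developed in Section \ref{sec3}.
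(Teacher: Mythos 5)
Your proof is correct and follows essentially the same route as the paper: the forward direction applies the e.c.\ property of $\cal R$ to an inclusion $\cal R\subseteq\cal N$ (the paper passes directly to II$_1$ factors, invoking the earlier remark that CEP for factors suffices, whereas you make the free-product reduction explicit), and the backward direction uses CEP to get some embedding $\cal N\hookrightarrow\R^\u$ and then invokes Jung's theorem to conjugate its restriction to $\cal R$ onto the diagonal embedding. You have simply spelled out the steps that the paper compresses into ``it is straightforward to check that this finishes the job.''
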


\begin{proof}
We first note that if $\cal R$ is e.c., then CEP holds:  given a II$_1$ factor $\cal M$, we have that $\cal R\subseteq \cal M$, whence, since $\cal R$ is e.c., we have that $\cal M$ embeds into $\cal R^\u$.  Conversely, suppose that CEP holds; we show that $\cal R$ is e.c.  To see this, suppose that $\cal R\subseteq \cal M$ with $\cal M$ separable.  By CEP, $\cal M$ embeds into $\cal R^\u$.  At the moment, this does not imply that $\cal R$ is e.c.\ as the composed embedding $\cal R\hookrightarrow \cal R^\u$ need not be the diagonal embedding.  However, a nontrivial result of Kenley Jung \cite{jung} implies that every embedding $\pi:\cal R\hookrightarrow \cal R^\u$ is unitarily conjugate to the diagonal embedding, meaning that there is a unitary element $u\in \cal R^\u$ such that $\pi(a)=uau^*$ for all $a\in \cal R$ (viewing $\R$ as literally a subalgebra of $\R^\u$ via the diagonal embedding).  It is straightforward to check that this finishes the job.
\end{proof}

Following \cite{mtoa3}, we call a tracial von Neumann algebra $\cal M$ \emph{locally universal} if every tracial von Neumann algebra embeds into an ultrapower of $\cal M$.  In this terminology, CEP asks if $\cal R$ is locally universal.  The proof of the previous theorem shows the following:  

\begin{thm}
Every e.c.\ tracial von Neumann algebra is locally universal.  In particular, locally universal tracial von Neumann algebras exist.
\end{thm}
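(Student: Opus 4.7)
The plan is to exploit the semantic form of the definition of existential closedness directly. Fix an e.c.\ tracial von Neumann algebra $\cal M$ and an arbitrary tracial von Neumann algebra $\cal N$; our job is to produce an embedding of $\cal N$ into some ultrapower of $\cal M$. The key move is to exhibit a common extension of $\cal M$ and $\cal N$ that is again a tracial von Neumann algebra. The free product construction $\cal M*\cal N$ (with the free product of the traces), mentioned in the third bullet of Subsection \ref{sec3:tracial}, does exactly this, since free products of tracial von Neumann algebras are again tracial von Neumann algebras into which both factors embed trace-preservingly.

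Viewing $\cal M$ as a subalgebra of $\cal M*\cal N$, the e.c.\ hypothesis supplies an embedding $\iota\colon \cal M*\cal N\hookrightarrow \cal M^\u$ (for some ultrafilter $\u$) that restricts to the diagonal embedding on $\cal M$. The restriction $\iota|_{\cal N}\colon \cal N\hookrightarrow \cal M^\u$ is then the sought-after embedding, so $\cal M$ is locally universal. For the ``in particular'' clause, one need only exhibit a single e.c.\ tracial von Neumann algebra; as recalled in Subsection \ref{sec8:existentially}, every tracial von Neumann algebra embeds into an e.c.\ one of the same density character (see \cite{FGHS}), so taking any e.c.\ extension of, say, $\R$ produces a concrete locally universal tracial von Neumann algebra by the previous paragraph.

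There is essentially no technical obstacle here: the argument is little more than an unwinding of the definition of ``existentially closed'' once one recognizes that any two tracial von Neumann algebras amalgamate inside a single tracial von Neumann algebra via the free product. The mildest point to be careful about is that the ultrafilter $\u$ furnished by the e.c.\ definition is allowed to depend on $\cal N$, and in particular to live on a potentially large index set when $\cal N$ is nonseparable; this is precisely why the definition uses ``some ultrapower'' rather than a fixed one, and is handled by the good ultrafilters mentioned in the second bullet of Subsection \ref{sec3:tracial}.
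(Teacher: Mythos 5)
Your proposal is correct and takes essentially the same approach as the paper, which is extremely terse here (it simply asserts that the proof of the preceding theorem carries over); you correctly identify the one missing ingredient needed to make that adaptation work, namely that the free product $\cal M * \cal N$ supplies a common tracial extension of $\cal M$ and $\cal N$, replacing the special fact $\cal R\subseteq\cal M$ used for the case $\cal M = \cal R$.
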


The latter conclusion was first reached (using a different argument) in \cite[Example 6.4]{mtoa3} and was referred to as a resolution to the \emph{Poor Man's Connes Embedding Problem}.  Note also that CEP holds if and only if any locally universal tracial von Neumann algebra embeds in $\R^\u$.

Another important fact for us is the following; see \cite[Proposition 2.10]{GoldGames} for a proof:

\begin{thm}
Being an e.c.\ tracial von Neumann algebra is an enforceable property.
\end{thm}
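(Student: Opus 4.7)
The plan is to recognize being e.c.\ as a countable conjunction of individually enforceable properties and then invoke the Conjunction Lemma. Recall that a separable tracial von Neumann algebra $\cal M$ is e.c.\ if and only if, for every existential formula $\varphi(\vec x)=\inf_{\vec y}\psi(\vec x,\vec y)$ (with $\psi$ quantifier-free), every tuple $\vec a\in \cal M_1^{|\vec x|}$, and every extension $\cal N\supseteq \cal M$, the values $\varphi(\vec a)^{\cal M}$ and $\varphi(\vec a)^{\cal N}$ coincide. The inequality $\varphi(\vec a)^{\cal N}\le \varphi(\vec a)^{\cal M}$ is automatic (the infimum on $\cal N_1$ is taken over a larger set), so the real content is the reverse inequality. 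Since the constants from $C$ give a $\|\cdot\|_\tau$-dense subset of $\cal M^*_1$ and existential formulas are uniformly continuous in their free variables (with modulus inherited from the quantifier-free $\psi$), it suffices to control the reverse inequality on tuples of constants from $C$. By the Conjunction Lemma, it is therefore enough to show that, for each triple $(\varphi,\vec c,\epsilon)$ where $\varphi$ ranges over a fixed countable dense set of existential formulas, $\vec c$ over tuples of distinct constants from $C$, and $\epsilon$ over positive rationals, the property $P_{\varphi,\vec c,\epsilon}$ asserting
\[
\varphi(\vec c)^{\cal M^*}\le \varphi(\vec c)^{\cal N^*}+\epsilon \quad \text{for every extension } \cal N^*\supseteq \cal M^*
\]
is enforceable in the compiled structure $\cal M^*$.

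To enforce a fixed $P_{\varphi,\vec c,\epsilon}$, player $\exists$ waits until the constants $\vec c$ appear in some move of $\forall$ (she can always arrange this via a harmless preparatory move) and then responds as follows to $\forall$'s latest open condition $\Sigma$. She considers
\[
r:=\inf\{\psi(\vec a,\vec b)^{\cal N'}\ :\ (\cal N',\vec a)\text{ realizes }\Sigma\text{ with }\vec a\text{ interpreting }\vec c\text{, and }\vec b\in \cal N'_1\},
\]
and fixes an actual realization $(\cal N,\vec a)$ of $\Sigma$ together with $\vec b\in \cal N_1$ such that $\psi(\vec a,\vec b)^{\cal N}<r+\epsilon/2$. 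Picking fresh constants $\vec c'\subset C$ not yet appearing, she plays the extension of $\Sigma$ obtained by adjoining finitely many expressions of the form $|\,\|p(\vec c,\vec c')\|_\tau-\|p(\vec a,\vec b)\|_\tau\,|<\delta$ for enough $*$-polynomials $p$, and $\delta>0$ small enough (chosen using the uniform modulus of continuity of $\psi$) to force $\psi(\vec c,\vec c')<r+\epsilon$ in any realization of the extended condition. Because this extended condition is itself realized by $(\cal N,\vec a,\vec b)$, it is satisfiable, hence a legal move.

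The verification that $P_{\varphi,\vec c,\epsilon}$ now holds runs as follows. In the compiled structure $\cal M^*$, the tuple $\vec c'$ lies in $\cal M^*_1$, so $\varphi(\vec c)^{\cal M^*}\le \psi(\vec c,\vec c')^{\cal M^*}<r+\epsilon$. Conversely, for any extension $\cal N^*\supseteq \cal M^*$ and any $\vec y\in \cal N^*_1$, the triple $(\cal N^*,\vec c,\vec y)$ again realizes $\Sigma$ (with $\vec y$ playing the role of $\vec b$), so $\psi(\vec c,\vec y)^{\cal N^*}\ge r$ by the very definition of $r$; taking the infimum over $\vec y$ gives $\varphi(\vec c)^{\cal N^*}\ge r$. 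Combining these estimates yields $\varphi(\vec c)^{\cal M^*}<\varphi(\vec c)^{\cal N^*}+\epsilon$, as required. The main obstacle is really bookkeeping: the uniform modulus of continuity of $\psi$ must be tracked carefully in order to select the polynomials $p$ and the tolerance $\delta$ so that the bound on $\psi(\vec c,\vec c')$ is stable across \emph{all} realizations of the extended condition, not just the distinguished one $(\cal N,\vec a,\vec b)$. Passing from tuples of constants back to arbitrary tuples $\vec a\in \cal M^*_1$, via density of $C$ and the uniform continuity of $\varphi$, is then routine.
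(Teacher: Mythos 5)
Your proof is correct and follows exactly the route the paper has in mind. The paper itself does not spell out the argument—it remarks that it is ``a more elaborate version of the arguments given in the last section'' and cites the proof to \cite[Proposition 2.10]{GoldGames}—but what you wrote is precisely that standard Henkin/model-theoretic-forcing argument: reduce e.c.-ness to a countable family of conditions $P_{\varphi,\vec c,\epsilon}$ via density of the interpretations of $C$ and uniform continuity of formulas, invoke the Conjunction Lemma, and enforce each $P_{\varphi,\vec c,\epsilon}$ by having $\exists$ pin fresh constants $\vec c'$ near a near-optimal witness in a realization of the current condition, noting that the infimum $r$ is a lower bound valid in \emph{every} structure realizing $\Sigma$, in particular in the compiled structure and all its extensions. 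The only thing worth flagging is that your reduction quietly uses that open conditions (being finitely many strict inequalities on moments) are preserved under passing to extensions, so that any $\cal N^*\supseteq\cal M^*$ still realizes $\Sigma$; you use this correctly but do not state it, and it is the one place a reader might pause.
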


The proof of the previous theorem is a more elaborate version of the arguments given in the last section.

One might ask:  is there some first-order way of axiomatizing the e.c.\ tracial von Neumann algebras?  The answer is no, a result first proven by Hart, Sinclair, and the author in \cite{GHS} although we now know of some more elementary proofs (see \cite[Corollary 5.19]{spectral} for example).

\subsection{CEP and enforceability}\label{sec8:CEP}

As we have seen in the previous subsections, while enforceability of a property seemed like it shoud rarely happen, we actually know of many interesting properties that are in fact enforceable.  We now consider a real extreme version of this:

\begin{defn}
A tracial von Neumann algebra $\cal M$ is said to be \emph{enforceable} if the property of being isomorphic to $\cal M$ is an enforceable property.
\end{defn}

Clearly, if an enforceable tracial von Neumann algebra exists, then it is unique.  Enforceable structures do exist in many other contexts.  For example, the enforceable graph is the \emph{random} or \emph{Rado} graph and the enforceable field of a particular characteristic is the algebraic closure of the prime field.  (Note, however, that the enforceable group does not exist; while somewhat implicit in \cite{hodges}, this is made explicit in \cite{GKL}.)  On the analytic side, we have that the enforceable metric space is the \emph{Urysohn space} \cite{usvyatsov}, the unique Hilbert space of dimension $\aleph_0$ is the enforceable Hilbert space \cite[Section 15]{mtfms}, and the enforceable Banach space is the \emph{Gurarij Banach space} \cite{BYH}.

So what about the enforceable tracial von Neumann algebra?  Here is the connection to CEP:

\begin{thm}[G. \cite{GoldGames}]  The following statements are equivalent:
\begin{enumerate}
    \item CEP has a positive solution.
    \item The property of being hyperfinite is enforceable.
    \item $\cal R$ is the enforceable tracial von Neumann algebra.
    \item The property of being embeddable in $\R^\u$ is enforceable.
\end{enumerate}
\end{thm}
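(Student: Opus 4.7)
My plan is to prove the equivalences cyclically as $(1) \Rightarrow (2) \Rightarrow (3) \Rightarrow (4) \Rightarrow (1)$, with essentially all the substantive work concentrated in the first implication. The other three implications will be short consequences of the Conjunction Lemma, the Murray--von Neumann uniqueness theorem for the separable hyperfinite II$_1$ factor, and the identification of e.c.\ tracial von Neumann algebras with locally universal ones from Subsection~\ref{sec8:existentially}.

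The key observation powering $(1) \Rightarrow (2)$ is that under CEP every satisfiable open condition $\Sigma$ is realized by some tuple in $\cal R$ itself: if $\Sigma$ is witnessed by a tuple in some tracial von Neumann algebra $\cal M$, then CEP gives an embedding $\cal M \hookrightarrow \cal R^\u$, and since $\Sigma$ is open, for $\u$-almost every coordinate $m$ the representing tuple already satisfies $\Sigma$ in $\cal R$. With this at hand, $\exists$ adopts the following dovetailing strategy: enumerate the pairs $(k,n)$ and at the $(k,n)$-th move, having received a condition $\Sigma$ from $\forall$, first realize $\Sigma$ by some tuple $(b_1,\dots,b_K)\in \cal R^K$ (with $K$ exceeding the number of constants mentioned so far), then exploit hyperfiniteness of $\cal R$ to choose matrix units $\{f_{rs}\}$ of some $M_N \subseteq \cal R$ together with scalars $\lambda^{(j)}_{rs}$ such that $\|b_j - \sum_{rs}\lambda^{(j)}_{rs} f_{rs}\|_2 < 1/n$ for each $j\leq k$, and finally extend $\Sigma$ by introducing fresh constants $d_{rs}$ together with the open conditions that the $d_{rs}$ approximately satisfy the matrix-unit relations (to within $1/n$ in $\|\cdot\|_2$) and that $\|c_j - \sum_{rs}\lambda^{(j)}_{rs} d_{rs}\|_2 < 1/n$ for each $j\leq k$. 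This extended condition is satisfiable, witnessed by $(b_1,\dots,b_K,f_{11},\dots,f_{NN})$ in $\cal R$. Running the dovetail and applying the Conjunction Lemma as $1/n\to 0$, the compiled structure satisfies the matrix-unit relations exactly on each family $\{d_{rs}^{(k,n)}\}$ (faithfulness of the trace converts vanishing of $\|\cdot\|_2$ into equality) and every constant $c_j$ lies in a finite-dimensional $*$-subalgebra. Density of the constants in the compiled algebra combined with the standard Murray--von Neumann characterization of hyperfiniteness then delivers hyperfiniteness.

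For $(2) \Rightarrow (3)$, combine enforceability of hyperfiniteness with the already-established enforceability of being a II$_1$ factor via the Conjunction Lemma: the compiled structure is then a separable hyperfinite II$_1$ factor and hence isomorphic to $\cal R$ by Murray--von Neumann uniqueness, so $\cal R$ is enforceable. For $(3) \Rightarrow (4)$, observe that the compiled structure being $\cal R$ embeds diagonally into $\cal R^\u$, so embeddability in $\cal R^\u$ is trivially enforced by the same strategy.

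For $(4) \Rightarrow (1)$, I pair the assumed enforceability of embeddability in $\cal R^\u$ with the enforceability of being existentially closed (Subsection~\ref{sec8:enforceable}) via the Conjunction Lemma to obtain an e.c.\ tracial von Neumann algebra $\cal M$ that embeds in $\cal R^\u$. The local universality of e.c.\ algebras (Subsection~\ref{sec8:existentially}) then shows that any tracial von Neumann algebra $\cal N$ embeds into some ultrapower $\cal M^{\cal V}$, which in turn embeds into $(\cal R^\u)^{\cal V}$; hence $\Th_\forall(\cal N) \leq \Th_\forall(\cal R)$, and the ultrafilter-free reformulation of CEP recorded in Subsection~\ref{sec3:tracial} closes the loop by producing an embedding $\cal N \hookrightarrow \cal R^{\u'}$ for some (equivalently, every) nonprincipal ultrafilter $\u'$ on $\bb N$. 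The main obstacle throughout is the construction of $\exists$'s strategy in $(1) \Rightarrow (2)$: the subtle point worth emphasizing is that although $\exists$'s realization in $\cal R$ may shift from move to move, the game only requires each successive condition to be individually satisfiable, so the apparent incoherence across realizations does not actually threaten the construction.
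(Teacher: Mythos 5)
Your proposal is correct and follows essentially the same route as the paper's own proof: the cyclic scheme $(1)\Rightarrow(2)\Rightarrow(3)\Rightarrow(4)\Rightarrow(1)$, with $(1)\Rightarrow(2)$ resting on the observation that under CEP every satisfiable open condition is realized in $\cal R$ itself (so $\exists$ can keep adjoining approximate matrix-unit families that approximate the constants, and apply the Conjunction Lemma), $(2)\Rightarrow(3)$ via the Conjunction Lemma plus Murray--von Neumann uniqueness, $(3)\Rightarrow(4)$ trivial, and $(4)\Rightarrow(1)$ by conjoining with enforceability of being e.c.\ and invoking local universality of e.c.\ factors. One small imprecision in your write-up of $(1)\Rightarrow(2)$: the compiled structure does not satisfy the matrix-unit relations exactly on a given family $\{d_{rs}^{(k,n)}\}$ (the $1/n$ error is fixed for that family); rather, one finishes with the standard perturbation fact that sufficiently approximate matrix units are $\|\cdot\|_\tau$-close to genuine ones, which the paper also uses implicitly.
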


\begin{proof}
(1) implies (2):  By CEP, every open condition is satisfied in $\R^\u$ and hence in $\R$.  Thus, given any $n$ and rational $\epsilon>0$, if player $\forall$ opens with $\Sigma$, then $\Sigma$ is satisfied in $\R$ and thus $c_1,\ldots,c_n$ are all within $\epsilon$ in $\|\cdot\|_\tau$ of some finite linear combination of approximate matrix units for some sufficiently large matrix algebra.  Thus, player $\exists$ can respond with this extension of $\Sigma$.  Now apply the Conjunction Lemma.

(2) implies (3):  If being hyperfinite is enforceable, then since being a II$_1$ factor is also enforceable, we see by the Conjunction Lemma that being a hyperfinite II$_1$ factor is enforceable, whence $\R$ itself is enforceable.

(3) implies (4) is trivial.  For (4) implies (1), if being embeddable in $\R^\u$ is enforceable, then since being e.c.\ is also enforceable, we see that there is an e.c.\ tracial von Neumann algebra that embeds in $\R^\u$.  Since this e.c.\ tracial von Neumann algebra is necessarily locally universal, by the observation made in the previous subsection, we have that CEP has a positive solution.
\end{proof}

Now that we know that CEP has a negative solution, we see that no e.c.\ tracial von Neumann algebra embeds in $\R^\u$.  Since being e.c.\ is enforceable, we see that the situation is pretty dire:  it is enforceable that the compiled structure does not embed in $\R^\u$, which should be seen as a ``generic'' negative solution to the CEP.

\subsection{Properties of the enforceable II$_1$ factor (again, should it exist)}\label{sec8:properties}

Now that we know that CEP has a negative solution, we know that $\cal R$ is not enforceable.  But there is still the possibility that the enforceable tracial von Neumann algebra $\cal E$ (which must necessarily be a II$_1$ factor) exists.  It is this author's humble opinion that the existence of the enforceable II$_1$ factor is one of the most interesting open problems in the model theory of operator algebras.  Indeed, if $\cal E$ exists, then it rivals $\cal R$ for being the most ``canonical'' II$_1$ factor.  On the other hand, if $\cal E$ does not exist, then this can be seen as a strong negative solution to the CEP.

We first mention a theorem that might help us figure out whether or not it exists; see \cite{GoldGames} for a proof:

\begin{thm}[Dichotomy theorem]
Exactly one of the following two conditions holds:
\begin{enumerate}
    \item For every enforceable property $P$ of tracial von Neumann algebras, there exist continuum many nonisomorphic separable tracial von Neumann algebras with property $P$.
    \item The enforceable II$_1$ factor $\cal E$ exists.
\end{enumerate}
\end{thm}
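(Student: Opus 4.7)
The implication $(2) \Rightarrow \neg(1)$ is immediate: if $\cal E$ exists, then the property ``isomorphic to $\cal E$'' is enforceable yet possessed by exactly one separable tracial von Neumann algebra up to isomorphism, violating the universal quantifier in (1). All the work is in the converse.

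Assume $\cal E$ does not exist and let $P$ be an enforceable property with winning strategy $\sigma$ for $\exists$. As a warm-up, $P$ has at least two nonisomorphic separable models: otherwise every play against $\sigma$ would compile to a single $\cal M$, so $\sigma$ would enforce ``isomorphic to $\cal M$'', making $\cal M = \cal E$. The goal is to upgrade this to $2^{\aleph_0}$ models via a binary-tree construction. Conceptually, the space of infinite plays carries a natural Polish topology, the Conjunction Lemma is the Banach--Mazur theorem for this game, and enforceable properties correspond to comeager sets of plays; in particular, the set $X_P$ of $\sigma$-plays whose compilation satisfies $P$ is comeager.

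I would inductively choose, for each $s \in 2^{<\omega}$, a finite partial play $\pi_s$ consistent with $\sigma$, with $\pi_{s0}$ and $\pi_{s1}$ both properly extending $\pi_s$, together with a basic expression $\|p_s(\bar c_s)\|_\tau$ and a gap $\delta_s > 0$ such that every $\sigma$-consistent completion past $\pi_{s0}$ assigns this expression a value more than $\delta_s$ away from the value assigned by any $\sigma$-consistent completion past $\pi_{s1}$. For each branch $\alpha \in 2^\omega$, playing $\sigma$ against a cofinal $\forall$-play extending $\bigcup_n \pi_{\alpha \restriction n}$ yields a compiled separable structure $\cal M_\alpha$ satisfying $P$. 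If $\alpha \neq \beta$ with longest common prefix $s$, then $\cal M_\alpha$ and $\cal M_\beta$ disagree on $\|p_s(\bar c_s)\|_\tau$ by more than $\delta_s$; by choosing the constants in $\bar c_s$ to be fresh at each branching node and exploiting the enforceability of being an e.c.\ II$_1$ factor, this disagreement survives any potential isomorphism, so we obtain continuum many pairwise nonisomorphic separable models of $P$.

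The main obstacle is justifying the splitting step: why is such a disagreement always available at every $\pi_s$? Suppose not: at some $\pi_s$, every basic expression $\|p(\bar c)\|_\tau$ receives a uniquely enforced value across all $\sigma$-consistent completions past $\pi_s$. Via the Polish-space picture this says that a single iso class is comeager among $\sigma$-plays extending $\pi_s$. Since the Henkin building game is invariant under permutations of the countable pool of constants $C$, comeagerness in the cone above $\pi_s$ can be transported to a globally comeager iso class after relabeling constants, yielding an enforceable separable structure and contradicting the nonexistence of $\cal E$. Making this relabeling rigorous, and confirming that ``same enforced theory over $C$'' together with enforceability of being an e.c.\ II$_1$ factor with $C$ generating a dense subalgebra truly forces isomorphism of compiled structures, is the technical heart of the argument.
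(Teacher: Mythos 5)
Your argument for $(2) \Rightarrow \neg(1)$ is correct and is the trivial direction. The paper does not give its own proof but cites \cite{GoldGames}; the argument there is via descriptive set theory (the isomorphism relation on the Polish space of $\sigma$-consistent plays is analytic, so Burgess-style perfect-set dichotomy applies), which sidesteps the step on which your proposal founders.

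The genuine gap is the non-isomorphism claim for the branches. You build the tree so that branches through $\pi_{s0}$ and $\pi_{s1}$ assign values to $\|p_s(\bar c_s)\|_{\tau}$ differing by more than $\delta_s$, and then assert that ``by choosing the constants $\bar c_s$ to be fresh at each branching node and exploiting the enforceability of being an e.c.\ II$_1$ factor, this disagreement survives any potential isomorphism.'' This does not follow. The value $\|p_s(\bar c_s)\|_{\tau}$ is a datum about a \emph{named tuple}, not an isomorphism invariant of the compiled algebra: an isomorphism $\Phi:\cal M_\alpha\to\cal M_\beta$ is a trace-preserving $*$-isomorphism of von Neumann algebras and is under no obligation to carry $\bar c_s^{\cal M_\alpha}$ to $\bar c_s^{\cal M_\beta}$. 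Worse, enforcing the e.c.\ property works \emph{against} you here: all e.c.\ tracial von Neumann algebras realize the same existential (in particular quantifier-free) types, so every achievable moment $\|p_s(\cdot)\|_{\tau}$ is approximately realized by \emph{some} tuple in each $\cal M_\alpha$. There is also no cardinality rescue: a single separable $\cal M$ admits continuum-many dense sequences of generators, so continuum-many branches could all compile to the same $\cal M$ without contradiction.

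The other gap you flag yourself---promoting ``no split past $\pi_s$'' to global enforceability of a single isomorphism type via a relabeling of the constant pool $C$---is more plausible (this is the usual homogeneity of Henkin forcing), but as written it also rests on the unproved claim that pinning down all quantifier-free moments of $C$ pins down the isomorphism type, which again requires an argument. The two gaps are linked: the correct mechanism for distinguishing (or identifying) compiled structures must be invariant under relabeling of generators, and a single basic moment is not.
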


Consequently, one strategy for showing that $\cal E$ does exist is to find some enforceable property $P$ such that fewer than continuum many tracial von Neumann algebras have property P.

On the other hand, in order to prove that $\cal E$ does not exist, it might prove useful to analyze some of its properties (should it exist).  As mentioned above, being e.c.\  is an enforceable property and thus $\cal E$, if it exists, has all of the properties common to e.c. factors, such as being McDuff and having only approximate inner automorphisms.  Moreover, as shown in \cite[Section 6]{GoldGames}, $\cal E$ would embed into every e.c.\ factor, which is reminiscent of the situation that $\cal R$ embeds into every II$_1$ factor.  

Recalling that $\R$ has the McDuff property, we see that $\cal R\bar \otimes \cal R\cong \cal R$.  However, one can show that if $\cal E$ exists, then $\cal E\bar\otimes \cal E\not\cong \cal E$.  Indeed, it is possible to show that if the property of being isomorphic to $\cal M\bar\otimes \cal M$ for some II$_1$ factor $\cal M$ is enforceable, then CEP holds (see \cite[Remark 5.8]{GoldGames}).  Thus, $\cal E\not\cong \cal M\bar\otimes \cal M$ for any tracial von Neumann algebra $\cal M$.

The theorem of Jung mentioned above states that every embedding of $\cal R$ into $\cal R^\u$ is unitarily conjugate to the diagonal embedding.  We say that a II$_1$ factor $\cal M$ has the \emph{Jung property} if every embedding of $\cal M$ into its ultrapower $\cal M^\u$ is unitarily conjugate to the diagonal embedding.  Atkinson and Kunnawalkam Elayavalli \cite{AK} showed that $\cal R$ is the only $\R^\u$-embeddable factor with the Jung property.  However, in \cite{GoldResembles}, we showed that $\cal E$, if it exists, also has the Jung property.  One can use this fact to show that $\cal E$, should it exist, cannot even be \emph{elementarily equivalent} to $\cal E\bar\otimes \cal E$, meaning that there must be some $L_{vNa}$-sentence $\sigma$ such that $\sigma^{\cal E}\not=\sigma^{\cal E\bar\otimes \cal E}$! 

\end{document}